\title{Expressing $q$-series in terms of building blocks of Hecke-type double-sums}
\newtheorem{theorem}{Theorem}[section]
\newtheorem{proposition}[theorem]{Proposition}
\newtheorem{corollary}[theorem]{Corollary}
\newtheorem{lemma}[theorem]{Lemma}
\newtheorem{definition}[theorem]{Definition}
\numberwithin{equation}{section}
\DeclareMathOperator{\sg}{sg}
\begin{document}

\author{Eric T. Mortenson}
\address{Department of Mathematics and Computer Science, Saint Petersburg State University, St. Petersburg 199178, Russia}
\email{etmortenson@gmail.com}

\author{Ankit Sahu}
\address{Department of Mathematics and Computer Science, Saint Petersburg State University, St. Petersburg 199178, Russia}
\email{ankitsahu1610@gmail.com}

\date{27 July 2022}

\subjclass[2010]{11B65, 11F11, 11F27}

\keywords{Hecke-type double-sums, indefinite theta series, Appell functions, mock theta functions, theta functions}

\begin{abstract}
We express recent double-sums studied by Wang, Yee, and Liu in terms of two types of Hecke-type double-sum building blocks.  When possible we determine the (mock) modularity.  We also express a recent $q$-hypergeometric function of Andrews as a mixed mock modular form.
\end{abstract}
\maketitle


\section{Introduction}
We first quickly set some notations:
$$(x)_n=(x;q)_n :=\prod_{i=0}^{n-1}(1-q^{i}x), (x)_{\infty}=(x;q)_\infty := \prod_{i=0}^{\infty}(1-q^{i}x).$$

In recent work, Wang and Yee \cite{WY}, and Liu \cite{L} presented many $q$-series identities involving Hecke-type double-sums. However, they did not address the question of modularity regarding those double-sums. In other words they did not state how their new double-sums relate to Appell--Lerch functions, theta functions, or false theta functions. We define Appell--Lerch functions as follows
\begin{align*}
m(x,z;q):=\frac{1}{j(z;q)}\sum_{r=-\infty}^{\infty}\frac{(-1)^rq^{\binom{r}{2}}z^r}{1-q^{r-1}xz},
\end{align*}
and we define theta functions as
$$j(x;q):=(x)_\infty(q/x)_\infty(q)_\infty = \sum_{n=-\infty}^{\infty}(-1)^nq^{\binom{n}{2}}x^n,$$
where the equality follows from the well-known Jacobi triple product identity. We will also frequently use the following special cases of the above definition
\begin{align*}
J_{a,m}:=j(q^a;q^m), \ \overline{J}_{a,m}:=j(-q^a;q^m),  \text{ and } J_m:=J_{m,3m} = \prod_{i=1}^\infty(1-q^{mi}).
\end{align*}

Hecke-type double-sums have two types of symmetries \cite{CK}. Type I sums are of the form
\begin{equation}
\label{fabc}
f_{a,b,c}(x,y;q) := \left(\sum_{r,s\geq 0} - \sum_{r,s <0}\right)(-1)^{r+s}x^ry^sq^{a\binom{r}{2}+brs+c\binom{s}{2}},
\end{equation}
and type II sums are of the form
\begin{equation}
\label{gabc}
g_{a,b,c}(x,y;q) := \left(\sum_{r,s\geq 0} + \sum_{r,s <0}\right)(-1)^{r+s}x^ry^sq^{a\binom{r}{2}+brs+c\binom{s}{2}}.
\end{equation}
Type I sums were first systematically studied by Hecke \cite{He} but some special cases appeared earlier in the work of Rogers \cite{R}. The modularity questions regarding type I symmetry were solved by Zwegers \cite{Zw}, and Hickerson and Mortenson later demonstrated how to expand these double-sums in terms of Appell--Lerch functions and theta functions \cite{HM}. For example they found expressions such as
\begin{equation}
f_{1,2,1}(x,y;q)=j(y;q)m\big (\frac{q^2x}{y^2},\frac{y}{x};q^3\big )+j(x;q)m\big (\frac{q^2y}{x^2},\frac{x}{y};q^3\big ).\label{equation:f121-example}
\end{equation}
Appell--Lerch functions, $m(x,z;q)$, are also the building blocks of Ramanujan's classical mock-theta functions.

On the other-hand, even though many well-known functions have type II symmetry, very little is known about their general form. For example, the function studied by Andrews, Dyson, and Hickerson  \cite{ADH} reads
\begin{equation*}
\sigma(q)
:=1+\sum_{n=1}^{\infty}\frac{q^{n(n+1)/2}}{(1+q)(1+q^2)\cdots(1+q^n)}
=g_{1,3,3}(-q,q^2;q)-qg_{1,3,3}(-q^3,q^4;q),
\end{equation*}
which is also an example of a quantum modular form \cite{Za1}.
We also have false theta function examples from recent work of Chan and Kim \cite{CK} and Andrews and Warnaar \cite{AW} such as
\begin{equation*}
g_{1,2,2}(q,-q^3;q)=1+2\sum_{n=1}^{\infty}(-1)^nq^{n(n+1)/2}.
\end{equation*}
More such identities can be found in \cite{Mo21}.  Although type II sums appear related to false theta functions similar to how type I sums are related to theta functions, there are no known formulas analogous to (\ref{equation:f121-example}).

Double-sums of types I and II, and Appell--Lerch functions satisfy many interesting properties that are discussed in Section \ref{preliminaries}. This motivates us to try to express unknown double sums in terms of type I or type II symmetries.

Our goal in this work is to express the double-sums of \cite{L, WY} in terms of Hecke-type double-sums of type I symmetry \eqref{fabc} and type II symmetry \eqref{gabc}). We will evaluate the double-sums whenever possible and hopefully shed light on double-sums of type II symmetry. 

As an example, we have equation \eqref{th12} which corresponds to \eqref{WYth1} \cite[Theorem 1.1]{WY}:
\begin{equation*}
\begin{split}
\sum_{n=0}^\infty \sum_{|m|\leq n}(-1)^{m}q^{m^2+n^2}(1-q^{2n+1}) &= f_{1,0,1}(q^2,q^2;q^4)+qf_{1,0,1}(q^4,q^4;q^4)\\
&=j(q^2;q^4).
\end{split}
\end{equation*}
We will give numerous examples where we evaluate such expressions involving double-sums of type I \eqref{fabc} and type II symmetry \eqref{gabc} as described in \cite{Mo21} and \cite{HM}.

\section{Statement of results}

We have the following results from Wang--Yee \cite[Theorem 1.1, 1.2, 1.3]{WY}.
\begin{theorem}
\begin{align}
\sum_{n=1}^\infty \frac{q^n(q;q^2)_n}{(-q;q^2)_n(1+q^{2n})} 
&= \sum_{n=1}^\infty \sum_{|m|\leq n}(-1)^mq^{n^2+m^2}-\sum_{n=1}^\infty (-1)^nq^{2n^2},
\label{WYth1}\\
\sum_{n=1}^\infty \frac{q^{n^2}}{(-q;q^2)_n(1+q^{2n})} 
&= \sum_{n=1}^\infty \sum_{|m|\leq n/2}(-1)^mq^{n^2-2m^2} - \sum_{n=1}^\infty (-1)^nq^{2n^2},
\label{WYth2}\\
\sum_{n=1}^\infty \frac{(-1)^{n-1}q^n(q^2;q^2)_{n-1}}{(-q^2;q^2)_n} 
&= \sum_{n=1}^\infty \sum_{|m|\leq n/2}(-1)^mq^{n^2-2m^2} - \sum_{n=1}^\infty (-1)^nq^{2n^2}.\label{WYth3}
\end{align}
\end{theorem}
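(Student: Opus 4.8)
Since these three identities are precisely \cite[Theorems~1.1--1.3]{WY}, one option is simply to cite that reference; here I indicate the strategy I would use to reprove them. The plan is to view each left-hand side as a basic hypergeometric series and to transform it into the stated Hecke-type double-sum. The first move is to clear the factors $1/(1+q^{2n})$ by writing $1+q^{2n}=(-q^2;q^2)_n/(-q^2;q^2)_{n-1}$, so that each summand becomes a genuine basic hypergeometric term.

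For \eqref{WYth3} this is quickest: after the shift $n\mapsto n+1$ the left-hand side becomes
\[
\frac{q}{1+q^2}\sum_{k\ge 0}\frac{(q^2;q^2)_k}{(-q^4;q^2)_k}\,(-q)^k ,
\]
to which I would apply the Rogers--Fine identity (or Heine's transformation); after splitting the resulting single sum according to the parity of the index it should reassemble into $\sum_{n\ge 1}\sum_{|m|\le n/2}(-1)^mq^{n^2-2m^2}$ minus the correction term $\sum_{n\ge1}(-1)^nq^{2n^2}$. For \eqref{WYth1} and \eqref{WYth2} the weights $q^{n}$ and $q^{n^2}$ respectively point to a Bailey-pair insertion: one looks for a Bailey pair $(\alpha_n,\beta_n)$, relative to an appropriate base, for which the left-hand side is the $\beta$-side $\sum_n w_n\beta_n$ of the limiting Bailey lemma while the right-hand side is the matching $\alpha$-side $\sum_n w_n\alpha_n$, with $\alpha_n$ a Hecke-type sequence whose partial-theta evaluation contributes exactly the cone $|m|\le n$ (respectively $|m|\le n/2$) with signs $(-1)^m$.

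Because \eqref{WYth2} and \eqref{WYth3} have identical right-hand sides, it is enough to prove one of them together with the purely hypergeometric identity
\[
\sum_{n=1}^\infty \frac{q^{n^2}}{(-q;q^2)_n(1+q^{2n})}
=\sum_{n=1}^\infty \frac{(-1)^{n-1}q^n(q^2;q^2)_{n-1}}{(-q^2;q^2)_n} ,
\]
which I expect to follow from a single Heine- or Euler-type transformation (equivalently, from the same Bailey pair under two specializations of its free parameters).

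The hard part will be pinning down the correct Bailey pair --- or the correct chain of classical transformations --- that produces precisely the truncated theta sums above, together with the exact correction term $\sum_{n\ge1}(-1)^nq^{2n^2}$, which accounts for the boundary of the cone and the shift between $\sum_{n\ge1}$ and $\sum_{n\ge0}$. A secondary point needing care is convergence: on the relevant cones the right-hand double-sums are positive-definite in $(m,n)$ and hence converge absolutely, but every rearrangement on the left must be justified term by term.
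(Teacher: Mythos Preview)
Your first sentence is exactly right and is all the paper does: Theorem~2.1 is stated verbatim as \cite[Theorems~1.1--1.3]{WY} and is not reproved anywhere in the paper. It is used purely as input, with the paper's own contribution beginning only at Theorems~\ref{WYNewTh1} and~\ref{WYNewTh2}, where the Wang--Yee double-sums are rewritten in terms of $f_{a,b,c}$ and $g_{a,b,c}$.

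The remainder of your proposal is therefore extra: a plausible but admittedly incomplete sketch of how one might recover Wang--Yee's results via Rogers--Fine/Heine or a Bailey pair. As written it is not a proof---you yourself flag that identifying the correct Bailey pair and verifying the boundary correction $\sum_{n\ge1}(-1)^nq^{2n^2}$ are still open tasks---so it should not be presented as one. If you want to match the paper, simply cite \cite{WY}; if you want to go beyond it, you would need to actually carry out one of the transformations you describe rather than leave it at the level of strategy.
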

\noindent We note that \eqref{WYth2} and \eqref{WYth3} have the same right-hand sides.

Using techniques of Section \ref{calculations} we express \eqref{WYth1} in terms of double-sums of type II symmetry:

\begin{theorem} We have
\label{WYNewTh1}
\begin{equation}
\begin{split}
&\sum_{n=1}^\infty \frac{q^n(q;q^2)_n}{(-q;q^2)_n(1+q^{2n})} = \sum_{n=1}^\infty \sum_{|m|\leq n}(-1)^mq^{n^2+m^2}-\sum_{n=1}^\infty (-1)^nq^{2n^2}\\
&\qquad =\frac{1}{2}\lbrace g_{1,0,1}(q^2,q^2;q^4)+qg_{1,0,1}(q^4,q^4;q^4) + j(q^2;q^4)\rbrace  - \sum_{n=1}^\infty (-1)^nq^{2n^2}\\
&\qquad =\frac{1}{2}\lbrace g_{1,0,1}(q^2,q^2;q^4)+qg_{1,0,1}(q^4,q^4;q^4) + 1\rbrace .
\end{split}
\end{equation}
\end{theorem}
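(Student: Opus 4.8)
The plan is to establish the middle equality, since the first equality is given as \eqref{WYth1} and the last follows from the middle one upon replacing $j(q^2;q^4)$ by $1$, which is justified because $j(q^2;q^4) = (q^2;q^4)_\infty(q^2;q^4)_\infty(q^4;q^4)_\infty$ — wait, more precisely $j(q^2;q^4)$ is a theta function whose value we must relate correctly; in fact the quick route is that the example in the introduction already records $f_{1,0,1}(q^2,q^2;q^4)+qf_{1,0,1}(q^4,q^4;q^4)=j(q^2;q^4)$ together with $\sum_{n\geq0}\sum_{|m|\leq n}(-1)^mq^{m^2+n^2}(1-q^{2n+1})=j(q^2;q^4)$, and separately one checks $j(q^2;q^4)=\sum_{n=-\infty}^{\infty}(-1)^nq^{2n^2}$ via the Jacobi triple product, so that $j(q^2;q^4) - \sum_{n\geq1}(-1)^nq^{2n^2} = 1 - \tfrac12(j(q^2;q^4)-1) + \tfrac12(j(q^2;q^4)-1)$ — I will sort out this bookkeeping so that the third line drops out cleanly from the second.

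The core step is therefore to prove
\[
\sum_{n=1}^\infty \sum_{|m|\leq n}(-1)^mq^{n^2+m^2} - \sum_{n=1}^\infty(-1)^nq^{2n^2} = \tfrac12\bigl\{g_{1,0,1}(q^2,q^2;q^4)+qg_{1,0,1}(q^4,q^4;q^4)+j(q^2;q^4)\bigr\} - \sum_{n=1}^\infty(-1)^nq^{2n^2},
\]
which reduces to identifying
\[
\sum_{n=0}^\infty\sum_{|m|\leq n}(-1)^mq^{n^2+m^2} = \tfrac12\bigl\{g_{1,0,1}(q^2,q^2;q^4)+qg_{1,0,1}(q^4,q^4;q^4)+j(q^2;q^4)\bigr\}
\]
after accounting for the $n=0$ term. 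First I would write the left side as a genuine Hecke-type double-sum over a lattice region: the substitution pairing $(m,n)$ with $n^2+m^2 = a\binom{r}{2}+brs+c\binom{s}{2}+\text{linear}$ suggests setting $r,s$ so that the quadratic form $n^2+m^2$ with $|m|\leq n$ matches a region cut out by $r,s\geq 0$ together with its reflection. Concretely, with $n = $ (something like) $r+s$ or $2r+\epsilon$ and $m$ running symmetrically, I expect the constraint $|m|\leq n$ to split the $(r,s)$-plane into the two octant-pairs defining a $g_{a,b,c}$, producing the scale-$q^4$ sums $g_{1,0,1}(q^2,q^2;q^4)$ (even part) and $qg_{1,0,1}(q^4,q^4;q^4)$ (odd part) corresponding to $n$ even versus $n$ odd, while the boundary terms $|m| = n$ of the original sum contribute the extra theta function $j(q^2;q^4)$ and the factor $\tfrac12$ comes from the standard overcounting when one symmetrizes a sum with an inequality constraint into a $g$-sum (each interior point counted twice). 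This kind of splitting-and-symmetrizing is exactly the manipulation illustrated by the $f_{1,0,1}$ example in the introduction, run with the "$+$" convention instead of the "$-$" convention.

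I would then confirm the bookkeeping by comparing low-order $q$-expansions of both sides — at least through $q^{10}$ or so — to make sure the factor $\tfrac12$, the extra $j(q^2;q^4)$, and the treatment of the $n=0$ and $m=0$ diagonal terms are all correct, since sign and multiplicity errors are the easiest mistakes here. Finally, to pass from the second to the third line I would use the Jacobi triple product form $j(q^2;q^4)=\sum_{k=-\infty}^\infty(-1)^kq^{2k^2}=1+2\sum_{k=1}^\infty(-1)^kq^{2k^2}$; substituting gives $\tfrac12 j(q^2;q^4) - \sum_{n\geq1}(-1)^nq^{2n^2} = \tfrac12 + \sum_{n\geq1}(-1)^nq^{2n^2} - \sum_{n\geq1}(-1)^nq^{2n^2} = \tfrac12$, and adding the $\tfrac12\cdot 1$ already present yields the constant $1$ (equivalently $\tfrac12(j(q^2;q^4)+1) - \sum(-1)^nq^{2n^2} \ne 1$ unless the arithmetic is arranged as just shown, so this is the step to double-check). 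The main obstacle I anticipate is the first one: finding the exact change of variables that turns $\{(m,n): 0\le n,\ |m|\le n\}$ with weight $n^2+m^2$ into the union of lattice octants with a quadratic form of the shape $\binom{r}{2}\cdot 1 + rs\cdot 0 + \binom{s}{2}\cdot 1$ at modulus $q^4$, and correctly isolating which part of the original sum becomes the "honest" $g$-sum versus the leftover $j(q^2;q^4)$ and the factor $\tfrac12$ — everything after that is routine series manipulation.
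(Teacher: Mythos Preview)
Your plan would work, but it takes a different route from the paper. The paper does not symmetrize the single sum directly; it instead computes the pair
\[
\sum_{n\ge 0}\sum_{|m|\le n}(-1)^m q^{m^2+n^2}(1\pm q^{2n+1}),
\]
and after the change of variables $r=n+m$, $s=n-m$ (followed by a parity split $r,s\to 2R,2S$ or $2R+1,2S+1$), the extra factor $q^{2n+1}$ under $(R,S)\mapsto(-R-1,-S-1)$ turns each $\sum_{R,S\ge 0}$ piece into the matching $\sum_{R,S<0}$ piece. Hence the ``$+$'' version yields the $g$-expression and the ``$-$'' version yields $f_{1,0,1}(q^2,q^2;q^4)+qf_{1,0,1}(q^4,q^4;q^4)=j(q^2;q^4)$ (via Propositions~\ref{f1} and~\ref{f2}); averaging the two produces both the $\tfrac12$ and the theta term at once. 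Your alternative --- using that the summand $(-1)^{r+s}q^{2r^2+2s^2}$ is invariant under $(r,s)\mapsto(-r,-s)$ (and the odd-parity piece under $(r,s)\mapsto(-r-1,-s-1)$), so that $g=2\sum_{r,s\ge0}-(\text{axis terms})$ --- is more direct for this particular form but less portable: the paper's $(1\pm q^{\alpha m+\beta n+k})$ device is what lets it handle all of the double-sums in Section~\ref{calculations} uniformly. One sentence of yours needs correcting: the constraint $|m|\le n$, $n\ge 0$ maps only to $r,s\ge 0$, a \emph{single} octant --- the second octant $r,s<0$ is supplied by the symmetry of the summand, not by the constraint, and that is exactly why you get the factor $\tfrac12$ and the boundary theta. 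Your final arithmetic is fine and you need not worry about it: $j(q^2;q^4)=1+2\sum_{n\ge 1}(-1)^n q^{2n^2}$ gives $\tfrac12 j(q^2;q^4)-\sum_{n\ge 1}(-1)^n q^{2n^2}=\tfrac12$, whence $\tfrac12\{g+qg+j(q^2;q^4)\}-\sum_{n\ge 1}(-1)^n q^{2n^2}=\tfrac12\{g+qg+1\}$.
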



Again using techniques of Section \ref{calculations} we express \eqref{WYth2} and \eqref{WYth3} in terms of double-sums of type II symmetry and obtain
\begin{theorem}
\label{WYNewTh2} We have
\begin{equation}
\begin{split}
&\sum_{n=1}^\infty \frac{q^{n^2}}{(-q;q^2)_n(1+q^{2n})} =\sum_{n=1}^\infty \frac{(-1)^{n-1}q^n(q^2;q^2)_{n-1}}{(-q^2;q^2)_n}\\
&\qquad= \sum_{n=1}^\infty \sum_{|m|\leq n/2}(-1)^mq^{n^2-2m^2} - \sum_{n=1}^\infty (-1)^nq^{2n^2}\\
&\qquad =\frac{1}{2}\lbrace qg_{1,3,1}(q^6,q^6;q^4) + g_{1,3,1}(q^2,q^2;q^4)+q^4g_{1,3,1}(q^{10},q^{10};q^4)\\
&\qquad\qquad+q^9g_{1,3,1}(q^{14},q^{14};q^4)+ j(q^2;q^4)\rbrace - \sum_{n=1}^\infty (-1)^nq^{2n^2}\\
&\qquad =\frac{1}{2}\lbrace qg_{1,3,1}(q^6,q^6;q^4) + g_{1,3,1}(q^2,q^2;q^4)+q^4g_{1,3,1}(q^{10},q^{10};q^4)\\
&\qquad\qquad+q^9g_{1,3,1}(q^{14},q^{14};q^4)+ 1\rbrace.
\end{split}
\end{equation}
\end{theorem}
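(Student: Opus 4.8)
The plan is to start from the already-established $q$-series identity \eqref{WYth2}--\eqref{WYth3}, whose right-hand side is the Hecke-type double-sum $S := \sum_{n\geq 1}\sum_{|m|\leq n/2}(-1)^mq^{n^2-2m^2} - \sum_{n\geq 1}(-1)^nq^{2n^2}$. The first step is to rewrite the constrained double-sum $\sum_{n\geq 1}\sum_{|m|\leq n/2}(-1)^mq^{n^2-2m^2}$ as an unconstrained Hecke-type sum in the $f_{a,b,c}$ normalization. Writing $n = |m| + k$ with $k\geq 0$ (and handling the range $|m|\leq n/2$ carefully, which forces $k \geq |m|$, i.e.\ really $n-|m|\geq |m|$), one reorganizes the lattice points $(m,n)$ into the cone description underlying \eqref{fabc}; after completing the squares in the exponent $n^2 - 2m^2$, one identifies the parameters $(a,b,c)$ and the base $q^4$, together with the shift monomials $q$, $q^4$, $q^9$ that appear. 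This is the bookkeeping that produces a sum of four $f_{1,3,1}$ terms at base $q^4$ with arguments $q^2,q^2$; $q^6,q^6$; $q^{10},q^{10}$; $q^{14},q^{14}$ and prefactors $1,q,q^4,q^9$, mirroring the dissection already used for \eqref{th12} in the introduction.

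The second step is to pass from the type I sums $f_{1,3,1}$ to the type II sums $g_{1,3,1}$. The mechanism is the elementary identity relating the two symmetrizations: since $f_{a,b,c}$ uses $\sum_{r,s\geq 0}-\sum_{r,s<0}$ and $g_{a,b,c}$ uses $\sum_{r,s\geq 0}+\sum_{r,s<0}$, one has $f_{a,b,c}(x,y;q) = g_{a,b,c}(x,y;q) - 2\sum_{r,s<0}(\cdots)$, and the tail sum over $r,s<0$ can be folded back (via $r\mapsto -r$, $s\mapsto -s$) into a single theta-type quantity. Concretely, the four $f_{1,3,1}$ pieces should combine so that $\sum f_{1,3,1}(\cdots) = \sum g_{1,3,1}(\cdots) - j(q^2;q^4)$ after the negative-index contributions are summed — this is exactly the shape $\tfrac12\{\sum g + j(q^2;q^4)\}$ once one also uses $f = \tfrac12(f+f)$ and symmetrizes, i.e.\ $f_{a,b,c} = \tfrac12\big(g_{a,b,c} + (\text{something expressible as }j)\big)$ when the relevant $f$-sum here actually equals its own "folded" value. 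I expect this to parallel precisely the three-term manipulation in Theorem~\ref{WYNewTh1}, where $f_{1,0,1}(q^2,q^2;q^4)+qf_{1,0,1}(q^4,q^4;q^4)=j(q^2;q^4)$ was the input and $\tfrac12\{g+g+j\}$ the output; here the analogous identity is $f_{1,3,1}(q^2,q^2;q^4)+qf_{1,3,1}(q^6,q^6;q^4)+q^4f_{1,3,1}(q^{10},q^{10};q^4)+q^9f_{1,3,1}(q^{14},q^{14};q^4)=j(q^2;q^4)$, which itself follows from \eqref{WYth2} combined with the known evaluation of the $\sigma$-type false theta $\sum(-1)^nq^{2n^2}$ — wait, more cleanly: this $f$-identity is just the statement that the original left-hand $q$-series has a type I representation, obtained by the standard Bailey/telescoping route, and its value $j(q^2;q^4)$ comes from recognizing the resulting theta function.

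The third step is the final simplification $\sum_{n\geq 1}(-1)^nq^{2n^2} = \tfrac12(j(q^2;q^4) - 1)$, equivalently $j(q^2;q^4) = 1 + 2\sum_{n\geq 1}(-1)^nq^{2n^2}$, which is immediate from the Jacobi triple product form $j(x;q)=\sum_n(-1)^nq^{\binom n2}x^n$ with $x=q^2$, $q\mapsto q^4$: then $\binom n2\cdot 4 + 2n = 2n^2$, so $j(q^2;q^4)=\sum_n(-1)^nq^{2n^2}$, and splitting off $n=0$ and pairing $n$ with $-n$ (noting $2(-n)^2=2n^2$ but the signs $(-1)^{\pm n}$ agree) gives $j(q^2;q^4) = 1 + 2\sum_{n\geq 1}(-1)^nq^{2n^2}$. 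Substituting this into the penultimate line collapses the $+j(q^2;q^4)$ and the $-\sum(-1)^nq^{2n^2}$ into the clean $+1$ of the last line.

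The main obstacle is the first step: correctly dissecting the constrained sum $\sum_{|m|\leq n/2}$ — whose constraint is a genuine inequality between $|m|$ and $n$, not a free product — into a cone of the form underlying \eqref{fabc}, keeping track of the parity classes of $n$ modulo the base exponent $4$ and the exact shift monomials $1,q,q^4,q^9$. One must be careful that the boundary cases of the inequality $|m|\le n/2$ (when $n$ is odd versus even, and the $m=0$ diagonal) are counted with the right multiplicity so that no spurious theta-function correction terms appear beyond the single $j(q^2;q^4)$; the cleanest way to verify this is to expand both sides as $q$-series to sufficiently high order as a consistency check once the symbolic dissection is written down.
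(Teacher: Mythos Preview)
Your plan has the right large-scale shape --- dissect the constrained sum into four pieces indexed by the parity of $n$ and of $r+s$, recognise the $g_{1,3,1}$ combination, and finish with $j(q^2;q^4)=1+2\sum_{n\ge1}(-1)^nq^{2n^2}$ --- but two of your steps are misdescribed and the genuinely hard ingredient is missing.

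First, Step~1 does \emph{not} produce a sum of four $f_{1,3,1}$ terms. After the change of variables the region $\{(m,n):|m|\le n/2,\ n\ge0\}$ becomes the single quadrant $\{r,s\ge0\}$; so each of the four parity pieces is a one-quadrant sum $\sum_{r,s\ge0}$, and this equals $\tfrac12\big(f_{1,3,1}+g_{1,3,1}\big)$ at the relevant arguments, not $f_{1,3,1}$ alone. Consequently the original double-sum equals $\tfrac12(F+G)$ with $F$ and $G$ the four-term $f$-- and $g$--combinations. Your Step~2 mechanism (``fold the $r,s<0$ tail into a theta function'') is not correct in general: $\sum_{r,s<0}$ of a Hecke summand is another Hecke-type sum, not a single theta product. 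What you actually need, and what you eventually write down, is the identity
\[
F:=f_{1,3,1}(q^2,q^2;q^4)+qf_{1,3,1}(q^6,q^6;q^4)+q^4f_{1,3,1}(q^{10},q^{10};q^4)+q^9f_{1,3,1}(q^{14},q^{14};q^4)=j(q^2;q^4).
\]

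Second, and this is the real gap: you give no proof of $F=j(q^2;q^4)$. Your first attempted justification (``follows from \eqref{WYth2} combined with the known evaluation of $\sum(-1)^nq^{2n^2}$'') is circular, since \eqref{WYth2} is exactly what you are reducing to the $g$--form. Your second (``the value $j(q^2;q^4)$ comes from recognizing the resulting theta function'') is not a method. In the paper this identity is the content of an entire section: one first uses the functional equation $f_{a,b,c}(x,y;q)=-\tfrac{q^{a+b+c}}{xy}f_{a,b,c}(q^{2a+b}/x,q^{2c+b}/y;q)$ to kill three of the four terms (giving $f_{1,3,1}(q^{10},q^{10};q^4)=0$ and $qf_{1,3,1}(q^6,q^6;q^4)+q^9f_{1,3,1}(q^{14},q^{14};q^4)=0$), leaving the single non-trivial evaluation $f_{1,3,1}(q^2,q^2;q^4)=j(q^2;q^4)$. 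That evaluation is then obtained via the Hickerson--Mortenson expansion of $f_{1,3,1}$ in Appell--Lerch functions (Theorem~\ref{f131decomposition}) together with $m(q^4,q^{-2l};q^8)=1-m(q^4,q^{2l};q^8)$ from \eqref{mrelation1}--\eqref{mrelation2}. None of this is in your outline; without it the argument is incomplete. Your Step~3, by contrast, is fine and matches the paper.
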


We now apply the same technique as in Section \ref{calculations} to identities and theorems from a paper by Liu \cite{L}.
\begin{proposition} \cite[Proposition $1.11$]{L}We have
\label{LiuProp1_11}
\begin{align}
\label{Liu1_11a}
\sum_{n=0}^\infty \frac{(q;q^2)_nq^n}{(q^2;q^2)_n}&= \frac{(q;q^2)_{\infty}}{(q^2;q^2)_{\infty}}\sum_{n=0}^\infty\sum_{|m|\leq n}(-1)^{n+m}q^{n^2+n-m^2},\\
\label{Liu1_11b}
\sum_{n=0}^\infty \frac{(-1)^nq^{n^2+n}}{(q^2;q^2)_n}&=\frac{1}{(q;q)_{\infty}}\sum_{n=0}^{\infty}\sum_{|m|\leq n} (-1)^{m}(1-q^{2n+1})q^{2n^2+n-m^2},\\
\label{Liu1_11c}
\sum_{n=0}^\infty \frac{(-1)^nq^{n(n+1)/2}}{(q;q)_n}&=\frac{(-q;q)_\infty}{(q;q)_{\infty}}\sum_{n=0}^{\infty}\sum_{|m|\leq n} (-1)^{m}(1-q^{2n+1})q^{(3n^2+n)/2-m^2}.
\end{align}
\end{proposition}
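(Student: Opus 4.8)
The plan is to handle the three identities uniformly: first collapse the single $q$-hypergeometric sum on each left-hand side to an infinite product, so that the identity becomes a closed-form evaluation of a Hecke-type double-sum, and then read off that evaluation from the material of Section~\ref{preliminaries}. For \eqref{Liu1_11a} the $q$-binomial theorem $\sum_{n\ge0}(a;q^2)_nz^n/(q^2;q^2)_n=(az;q^2)_\infty/(z;q^2)_\infty$ at $a=z=q$ gives $\sum_{n\ge0}(q;q^2)_nq^n/(q^2;q^2)_n=(q^2;q^2)_\infty/(q;q^2)_\infty=J_2^2/J_1$. For \eqref{Liu1_11b} and \eqref{Liu1_11c} Euler's identity $\sum_{n\ge0}q^{\binom n2}z^n/(q;q)_n=(-z;q)_\infty$ --- with base $q^2$ and $z=-q^2$ in the former, base $q$ and $z=-q$ in the latter --- gives $(q^2;q^2)_\infty=J_2$ and $(q;q)_\infty=J_1$. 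After multiplying through by the stated prefactors, Proposition~\ref{LiuProp1_11} is therefore equivalent to the closed forms
\begin{gather*}
\sum_{n=0}^\infty\sum_{|m|\le n}(-1)^{n+m}q^{n^2+n-m^2}=\frac{J_2^4}{J_1^2},\\
\sum_{n=0}^\infty\sum_{|m|\le n}(-1)^m(1-q^{2n+1})q^{2n^2+n-m^2}=J_1J_2,\\
\sum_{n=0}^\infty\sum_{|m|\le n}(-1)^m(1-q^{2n+1})q^{(3n^2+n)/2-m^2}=\frac{J_1^3}{J_2}.
\end{gather*}

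To prove these three double-sum evaluations I would put each double-sum into one of the normal forms \eqref{fabc}--\eqref{gabc} and then apply the Hickerson--Mortenson and Mortenson expansions recalled in Section~\ref{preliminaries}. In the second and third identities the factor $(1-q^{2n+1})$ is precisely the antisymmetrizing device that turns a unilateral $n$-sum over the triangle $|m|\le n$ into a signed bilateral sum, exactly as for the $f_{1,0,1}$ example of the introduction; carrying out the same reindexing should express each left side as a short $\mathbb Z$-linear combination of type~I sums $f_{a,b,c}$. In the first identity there is no such factor, but the right side is the square of the classical series $\psi(q):=\sum_{n\ge0}q^{n(n+1)/2}=J_2^2/J_1$, so one may either reindex by $(r,s)=(n+m,\,n-m)$ (keeping track of the parity constraint $r\equiv s\pmod2$) to reach a normal form or simply recognize the double-sum as an indefinite representation of $\psi(q)^2$. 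In each case the right-hand side is a genuine modular eta-quotient, so in the expansion the mock (Appell--Lerch) contributions must cancel identically, leaving a pure theta quotient; one then reduces that quotient to the displayed product of $J$-symbols with the Jacobi triple product and the elementary relations among the $J_{a,m}$.

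The main obstacle is the middle stage: selecting the normal-form data $(a,b,c,x,y)$ and the modulus correctly, and then verifying the non-obvious cancellation of the Appell--Lerch pieces so that the double-sums really collapse to honest theta quotients --- once that is in hand, matching the surviving quotient to the stated $J$-product is routine triple-product bookkeeping. Since Proposition~\ref{LiuProp1_11} is quoted verbatim from \cite[Proposition~1.11]{L}, the economical alternative is to cite Liu's proof, which reaches these identities through his general $q$-series transformation formulas rather than through the $f_{a,b,c}$ machinery, and to reserve the work of Section~\ref{calculations} for re-expressing these very double-sums in terms of the type~II building blocks $g_{a,b,c}$.
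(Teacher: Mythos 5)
Your reductions are all correct (the $q$-binomial theorem and Euler's identity do give $J_2^2/J_1$, $J_2$, $J_1$ for the three left-hand sides, and the resulting double-sum evaluations $J_2^4/J_1^2$, $J_1J_2$, $J_1^3/J_2$ are the right targets), but note that the paper does not prove Proposition \ref{LiuProp1_11} at all: it is quoted verbatim from Liu \cite{L}, whose proof goes through his $q$-derivative/$q$-series expansion formulas, and the proposition is then used as \emph{input} to derive the ``sum $=$ product'' identities of Theorem \ref{LiuProp1_11NewTheorem}. Your proposal reverses that logical flow: you would establish the three double-sum evaluations independently and then deduce the proposition by matching products, which is essentially the observation the paper attributes to Yee and Bradley-Thrush, upgraded to a full proof. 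This is legitimate and non-circular provided you take the double-sum evaluations from the parts of the paper that do not invoke Liu, namely the conversions \eqref{Liu1_11aIntermediate}--\eqref{Liu1_11cIntermediate} of Section \ref{calculations} together with Lemma \ref{lemmaf010} (Kronecker's identity), Section \ref{f131q3f131section}, and the Kac--Peterson evaluation \cite[pp.~219--220]{KP}; as written, your ``middle stage'' is only a sketch, and those three evaluations are exactly where all the work lies. Two smaller corrections to that sketch: for \eqref{Liu1_11a} there is no Appell--Lerch cancellation to verify, since the reindexed sum is $f_{0,1,0}(-q,-q;q^4)$ with $a=c=0$ and Kronecker's identity (Proposition \ref{KroneckerIdentity}) gives the product directly; and for \eqref{Liu1_11c} the paper itself does not carry out a cancellation argument for $f_{1,5,1}(q,q;q)+q^2f_{1,5,1}(q^4,q^4;q)$ but cites Kac--Peterson, so if you want a self-contained treatment you would still have to supply that evaluation. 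What your route buys is an elementary, citation-light proof of Liu's identities as corollaries of the Hecke-type machinery; what the paper's route (citing Liu) buys is brevity and the ability to present the product evaluations as new consequences rather than as ingredients.
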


Using the  techniques in Section \ref{calculations} we express \eqref{Liu1_11a}-\eqref{Liu1_11c} in terms of double-sums of type I symmetry. In each case we see that each of the new expressions further evaluates to a nice product and we obtain nice ``sum = product'' identities. We state them as follows:
\begin{theorem} We have
\label{LiuProp1_11NewTheorem}
\begin{equation}
\label{Liu1_11a_new}
\begin{split}
\sum_{n=0}^\infty \frac{(q;q^2)_nq^n}{(q^2;q^2)_n}&= \frac{(q;q^2)_{\infty}}{(q^2;q^2)_{\infty}}\sum_{n=0}^\infty\sum_{|m|\leq n}(-1)^{n+m}q^{n^2+n-m^2}\\
&= \frac{(q;q^2)_{\infty}}{(q^2;q^2)_{\infty}}f_{0,1,0}(-q,-q;q^4)\\
&=\frac{J_2^2}{J_1},
\end{split}
\end{equation}

\begin{equation}
\label{Liu1_11b_new}
\begin{split}
\sum_{n=0}^\infty \frac{(-1)^nq^{n^2+n}}{(q^2;q^2)_n}&=\frac{1}{(q;q)_{\infty}}\sum_{n=0}^{\infty}\sum_{|m|\leq n} (-1)^{m}(1-q^{2n+1})q^{2n^2+n-m^2}\\
&=\frac{1}{(q;q)_{\infty}}(f_{1,3,1}(q^2,q^2;q^2) + q^3f_{1,3,1}(q^6,q^6;q^2))\\
&=\frac{J_1J_2}{(q;q)_{\infty}}\\
&=J_2,
\end{split}
\end{equation}

\begin{equation}
\label{Liu1_11c_new}
\begin{split}
\sum_{n=0}^\infty \frac{(-1)^nq^{n(n+1)/2}}{(q;q)_n}&=\frac{(-q;q)_\infty}{(q;q)_{\infty}}\sum_{n=0}^{\infty}\sum_{|m|\leq n} (-1)^{m}(1-q^{2n+1})q^{(3n^2+n)/2-m^2}\\
&= \frac{(-q;q)_\infty}{(q;q)_{\infty}}(f_{1,5,1}(q,q;q)+q^2f_{1,5,1}(q^4,q^4;q))\\
&=\frac{(-q;q)_\infty}{(q;q)_{\infty}}J_1J_{1,2}\\
&=J_1.
\end{split}
\end{equation}
\end{theorem}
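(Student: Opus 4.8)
The plan is to start from Liu's Proposition~\ref{LiuProp1_11}, which already supplies the first equality in each of \eqref{Liu1_11a_new}--\eqref{Liu1_11c_new}; it then remains to (i) rewrite each Hecke-type double-sum on the right as the stated combination of type~I building blocks $f_{a,b,c}$, and (ii) show that this combination, times the displayed infinite-product prefactor, collapses to the claimed product.

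For step (i) I would run the change of variables of Section~\ref{calculations}. In all three identities the summation region is the cone $\{(n,m):n\ge 0,\ |m|\le n\}$, which I would split according to the parity of $n+m$: on the class $n+m$ even set $(n,m)=(r+s,\,r-s)$, and on the class $n+m$ odd set $(n,m)=(r+s+1,\,r-s)$, with $r,s\ge 0$; in both cases the map is a bijection onto that class. Under this substitution the sign $(-1)^{m}$ (resp.\ $(-1)^{n+m}$) becomes $\pm(-1)^{r+s}$ and the exponent becomes a quadratic form $a\binom{r}{2}+brs+c\binom{s}{2}$ plus an affine shift; the even class supplies the $\sum_{r,s\ge0}$ half of an $f_{a,b,c}$, while the odd class, after the reindexing $r\mapsto -r-1,\ s\mapsto -s-1$ (which is exactly what turns $-\sum_{r,s<0}$ in \eqref{fabc} into a sum over $r,s\ge0$), supplies the matching $-\sum_{r,s<0}$ half, so the two classes reassemble into a single $f_{a,b,c}$. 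When a factor $1-q^{2n+1}$ is present, as in \eqref{Liu1_11b}--\eqref{Liu1_11c}, it splits each class in two and effectively replaces $n$ by $n+1$ in half the terms, producing the extra shifted block $q^{3}f_{1,3,1}(q^{6},q^{6};q^{2})$, resp.\ $q^{2}f_{1,5,1}(q^{4},q^{4};q)$. As a check, in \eqref{Liu1_11a_new} the even class gives $\sum_{r,s\ge0}q^{4rs+r+s}$ and the odd class gives $-\sum_{r,s\ge0}q^{4rs+3r+3s+2}$, which are precisely the two halves of $f_{0,1,0}(-q,-q;q^{4})$.

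For step (ii) I would evaluate the left-hand sides of \eqref{Liu1_11a}--\eqref{Liu1_11c} directly by classical means: the $q$-binomial theorem (base $q^{2}$, $a=z=q$) gives $\sum_{n\ge0}(q;q^{2})_n q^n/(q^{2};q^{2})_n=(q^{2};q^{2})_\infty/(q;q^{2})_\infty=J_2^{2}/J_1$, and Euler's identity $\sum_{n\ge0}(-z)^n q^{\binom n2}/(q;q)_n=(z;q)_\infty$ gives $\sum_{n\ge0}(-1)^n q^{n^2+n}/(q^{2};q^{2})_n=(q^{2};q^{2})_\infty=J_2$ (base $q^{2}$, $z=q^{2}$) and $\sum_{n\ge0}(-1)^n q^{n(n+1)/2}/(q;q)_n=(q;q)_\infty=J_1$ (base $q$, $z=q$). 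Combining each of these with the corresponding identity in Proposition~\ref{LiuProp1_11}, and simplifying the prefactors via $(q;q^{2})_\infty=1/(-q;q)_\infty=J_1/J_2$, forces the product value of each $f_{a,b,c}$-combination and yields the displayed products $J_2^{2}/J_1$, $J_2$, $J_1$. Alternatively one can evaluate the $f_{a,b,c}$-combinations directly from known Hecke-double-sum-to-theta formulas, e.g.\ $f_{0,1,0}(-q,-q;q^{4})=J_2^{4}/J_1^{2}$.

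I expect step (i) to be the main obstacle: one must choose the substitution on each residue class so that the cone and its shifted copies glue into exactly one (or two) copies of $f_{a,b,c}$, which means correctly absorbing the shifts introduced by the factor $1-q^{2n+1}$ and verifying that the parity signs $(-1)^{m}$ and $(-1)^{r+s}$ line up after the change of variables. Once the double-sums have been correctly identified with the building blocks, the product evaluations in step (ii) are routine.
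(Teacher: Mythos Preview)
Your proposal is correct, and your step~(i) is exactly the change-of-variables argument the paper carries out in Section~\ref{calculations} and in \eqref{Liu1_11aIntermediate}--\eqref{Liu1_11cIntermediate}; your sample computation for \eqref{Liu1_11a_new} reproduces that verbatim.

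Your step~(ii), however, takes a different route from the paper. You evaluate the left-hand $q$-hypergeometric sums directly via the $q$-binomial theorem and Euler's identity, obtaining $J_2^2/J_1$, $J_2$, $J_1$, and then read off the value of each $f_{a,b,c}$-combination by comparison with Liu's Proposition~\ref{LiuProp1_11} and step~(i). The paper does the opposite: it evaluates each $f_{a,b,c}$-combination \emph{intrinsically}, using Kronecker's identity (Proposition~\ref{KroneckerIdentity}, Lemma~\ref{lemmaf010}) for $f_{0,1,0}(-q,-q;q^4)$, the Appell--Lerch expansion of Theorem~\ref{f131decomposition} plus a page of theta-function manipulations (Section~\ref{f131q3f131section}) for $f_{1,3,1}(q^2,q^2;q^2)+q^3 f_{1,3,1}(q^6,q^6;q^2)=J_1J_2$, and a citation to Kac--Peterson for the $f_{1,5,1}$ combination. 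The paper is aware of your shortcut and says so explicitly just after the statement of Theorem~\ref{LiuProp1_11NewTheorem}, attributing it to Yee and Bradley-Thrush; it deliberately avoids it because the point of the paper is to exercise the Hecke-double-sum toolkit. Your argument is shorter and self-contained, but it derives the evaluations of the $f$-blocks only as corollaries of Liu's identities, whereas the paper's approach proves them independently and would work even without knowing the $q$-series on the left.
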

\noindent
The last equality in \eqref{Liu1_11a_new} is proved in Lemma \ref{lemmaf010}, the penultimate equality in \eqref{Liu1_11b_new} is proved in Section \ref{f131q3f131section}, and the penultimate equality in \eqref{Liu1_11c_new} follows from \cite[pages 219-220]{KP}. The very last equalities in \eqref{Liu1_11b_new} and \eqref{Liu1_11c_new} can be seen very easily.

We remark that identities \eqref{Liu1_11a_new}-\eqref{Liu1_11c_new} have many proofs.  For example, Ae Ja Yee and Jonathan Bradley-Thrush have kindly pointed out that \eqref{Liu1_11a_new}-\eqref{Liu1_11c_new} follow from the $q$-binomial theorem and Euler's identity.  Here we will focus on the robust methods from the setting of Hecke-type double-sums.

Next we have the following equations from Liu  which we state as a proposition:
\begin{proposition} \cite[(4.8), (4.10), (4.6)]{L} We have
\begin{align}
\sum_{n=0}^\infty \frac{q^{n^2}(-q;q)^2_n}{(q;q)_{2n}}
&=\frac{1}{(q;q)_\infty}\sum_{n=0}^{\infty}\sum_{m=0}^{n}(1-q^{6n+6})q^{2n^2+n-m(m+1)/2},\label{Liu4_8}\\
\sum_{n=0}^\infty \frac{q^{n^2}}{(q;q^2)_{n}}
&=\frac{1}{(q;q)_\infty}\sum_{n=0}^{\infty}\sum_{m=0}^{n}(-1)^{n}(1-q^{6n+6})q^{2n^2+n-m(m+1)/2},\label{Liu4_10}\\
\sum_{n=0}^\infty \frac{(-1)^n(q;q^2)_nq^{n^2+n}}{(-q;q)_{2n}}
&=\sum_{n=0}^{\infty} \sum_{m\leq|n|}(-1)^{m+n}(1-q^{4n+2})q^{3n^2+n-m^2}.\label{Liu4_6}
\end{align}
\end{proposition}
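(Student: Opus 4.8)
The three identities are equations $(4.8)$, $(4.10)$, and $(4.6)$ of Liu \cite{L}, so in the paper itself the economical move is simply to quote that reference and pass to the new content; I sketch here how one could instead derive them from general principles. The structure of the right-hand sides is the guide. In \eqref{Liu4_8} and \eqref{Liu4_10} the prefactor $1/(q;q)_{\infty}$, together with the telescoping factor $(1-q^{6n+6})$ and the quadratic exponent $2n^{2}+n$, is the hallmark of Bailey's lemma; the relevant tool is the limiting identity
\[
\sum_{n\ge 0} a^{n}q^{n^{2}}\beta_{n}=\frac{1}{(aq;q)_{\infty}}\sum_{n\ge 0}a^{n}q^{n^{2}}\alpha_{n},
\]
valid for any Bailey pair $(\alpha_{n},\beta_{n})$ relative to $a$, which converts a $\beta$-side $q$-series into an $\alpha$-side double-sum once $\alpha_{n}$ is itself a finite sum. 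After writing $(q;q^{2})_{n}=(q;q)_{2n}/(q^{2};q^{2})_{n}$, the left sides of \eqref{Liu4_8} and \eqref{Liu4_10} become $\sum_{n\ge 0}q^{n^{2}}\beta_{n}$ with $\beta_{n}=(-q;q)_{n}^{2}/(q;q)_{2n}$ and $\beta_{n}=(q^{2};q^{2})_{n}/(q;q)_{2n}$ respectively, of the denominator type $1/(q;q)_{2n}$ attached to Bailey pairs relative to $a=1$.

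The plan for \eqref{Liu4_8} and \eqref{Liu4_10} is: (i) identify the Bailey pair --- equivalently, evaluate the ${}_{r}\phi_{s}$ series on the left via a transformation of Liu's --- whose $\alpha$-side produces the inner sum over $0\le m\le n$ carrying $q^{-m(m+1)/2}$; (ii) invoke the displayed identity, which supplies the prefactor $1/(q;q)_{\infty}$ automatically; (iii) reindex to reach the stated form, with the sign $(-1)^{n}$ in \eqref{Liu4_10} (but not in \eqref{Liu4_8}) dictated by which pair is used. For \eqref{Liu4_6} the situation differs: there is no $1/(q;q)_{\infty}$, and once the telescoping factor $(1-q^{4n+2})q^{3n^{2}+n}$ is collapsed the right-hand side is essentially a Hecke-type double-sum of the kind studied here, its $m$-free part being the theta function $\sum_{n\in\mathbb{Z}}(-1)^{n}q^{3n^{2}+n}$. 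Here I would argue directly, applying to $\sum_{n\ge 0}(-1)^{n}(q;q^{2})_{n}q^{n^{2}+n}/(-q;q)_{2n}$ one of Liu's Bailey-type transformations (or a Watson-type ${}_{8}\phi_{7}$ transformation), so that a ${}_{1}\psi_{1}$- or theta-factor contributes $\sum_{|m|\le n}(-1)^{m}q^{-m^{2}}$ while the companion term supplies $(1-q^{4n+2})$.

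The main obstacle in all three cases is less any single transformation than, first, pinning down the correct Bailey pair or transformation and, second, the reindexing and bookkeeping that follows: one must match the \emph{finite} inner sums ($0\le m\le n$, resp.\ $|m|\le n$), the precise exponents $2n^{2}+n-m(m+1)/2$ and $3n^{2}+n-m^{2}$, and the sign patterns $(-1)^{n}$ and $(-1)^{m+n}$, against the raw output, where an off-by-one in a $(1-q^{\,\cdot})$ factor or a shifted summation index is easy to introduce. Since the proposition only restates Liu's results, in the present paper the natural course is to cite \cite{L}; the new work is the subsequent rewriting of these double-sums in terms of the building blocks $f_{a,b,c}$ and $g_{a,b,c}$.
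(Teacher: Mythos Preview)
Your proposal is correct and aligns with the paper's treatment: the proposition is stated purely as a quotation of Liu's results \cite[(4.8), (4.10), (4.6)]{L} (with the historical remark that \eqref{Liu4_8} and \eqref{Liu4_10} go back to Andrews \cite{A1}), and no proof is supplied in the paper itself. Your additional Bailey-lemma sketch is extra context not present in the paper, but your conclusion---that the appropriate move here is to cite \cite{L} and reserve the effort for rewriting the double-sums in terms of $f_{a,b,c}$ and $g_{a,b,c}$---is exactly what the paper does.
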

To the best of our knowledge, expressions \eqref{Liu4_8} and \eqref{Liu4_10} first appeared in Andrews's \cite[(1.11), (1.10)]{A1}, where the left-hand side of \eqref{Liu4_10} is Ramanujan's third-order mock theta function $\psi(q)$.

We note that the right-hand sides of \eqref{Liu4_8} and \eqref{Liu4_10} are very similar. We again apply the techniques of Section \ref{calculations} and obtain the following three equations respectively:

\begin{theorem} We have
\begin{align}
\sum_{n=0}^\infty \frac{q^{n^2}(-q;q)^2_n}{(q;q)_{2n}}&=\frac{1}{(q;q)_\infty}\sum_{n=0}^{\infty}\sum_{m=0}^{n}(1-q^{6n+6})q^{2n^2+n-m(m+1)/2}\notag\\
&=\frac{1}{(q;q)_\infty}f_{4,4,3}(-q^3,-q^2;q), \label{Liu443-negative}\\
\sum_{n=0}^\infty \frac{q^{n^2}}{(q;q^2)_n}&=\frac{1}{(q;q)_\infty}\sum_{n=0}^\infty\sum_{m=0}^n(-1)^n(1-q^{6n+6})q^{2n^2+n-m(m+1)/2}\notag\\
&=\frac{1}{(q;q)_\infty}f_{4,4,3}(q^3,q^2;q), \label{Liu443-positive}\\
\sum_{n=0}^\infty \frac{(-1)^n(q;q^2)_nq^{n^2+n}}{(-q;q)_{2n}}&=\sum_{n=0}^{\infty} \sum_{m\leq|n|}(-1)^{m+n}(1-q^{4n+2})q^{3n^2+n-m^2}\notag\\
&=g_{1,2,1}(-q^{3},-q^{3};q^4)-q^4g_{1,2,1}(-q^{9},-q^{9};q^4).
\end{align}
\end{theorem}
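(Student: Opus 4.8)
\emph{(Throughout, the inner sum in the third identity is read as $|m|\le n$.)} Each of the three statements identifies a Hecke-type double-sum with one of our building blocks, and in each case that double-sum is \emph{verbatim} the right-hand side of the quoted identity of Liu (resp.\ Andrews). So the first equality in every display may be taken as given, and the only content is to recognize the displayed double-sum as an $f_{a,b,c}$ or a $g_{a,b,c}$. The plan, following Section~\ref{calculations}, has two steps: \emph{(i)} use the linear factor $1-q^{Ln+L}$ to split the double-sum as $D_1-D_2$, where $D_2$ carries the extra weight $q^{Ln+L}$, and re-index $D_2$ by an involution $n\mapsto -(n+c)$ so that the whole sum becomes one fixed summand ranged over two antipodal cones of the $(n,m)$-lattice; \emph{(ii)} apply a linear change of variables $(n,m)\mapsto(r,s)$ carrying the quadratic exponent to the exponent of the target block and the two cones to $\{r,s\ge0\}$ and $\{r,s<0\}$.

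For \eqref{Liu443-negative}, $D_2$ carries $q^{6n+6}$ and $2n^2+n+(6n+6)=2(n+2)^2-(n+2)$; the substitutions are $(r,s)=(n-m,\,m)$ on $D_1$ and $(r,s)=(m-n-1,\,-m-1)$ on $D_2$. One checks that the first is a bijection of the triangle $\{n\ge0,\ 0\le m\le n\}$ onto $\{r,s\ge0\}$ and the second a bijection of the same triangle onto $\{r,s<0\}$, and (using $\binom{m+1}{2}=\binom{-m}{2}$) that in both cases $2n^2+n-m(m+1)/2=2(r+s)^2+(r+s)-\binom{s+1}{2}$, which is exactly the exponent of $f_{4,4,3}(-q^3,-q^2;q)$ once the monomial $(-q^3)^r(-q^2)^s$ is combined with the defining sign $(-1)^{r+s}$ (they leave $q^{3r+2s}$, no residual sign). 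Thus $D_1$ is the $\{r,s\ge0\}$ part and $D_2$ the $\{r,s<0\}$ part of $f_{4,4,3}(-q^3,-q^2;q)$, so $D_1-D_2=f_{4,4,3}(-q^3,-q^2;q)$ and, dividing by $(q;q)_\infty$, the claim follows. Equation \eqref{Liu443-positive} is identical with $-q^3,-q^2$ replaced by $q^3,q^2$: now the monomial and sign combine to $(-1)^{r+s}q^{3r+2s}=(-1)^n q^{3r+2s}$, which matches the extra $(-1)^n$ in the double-sum of \eqref{Liu443-positive}. No congruence condition arises here because $(n,m)\mapsto(n-m,m)$ is unimodular.

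For the third identity the same splitting is used, but the involution folding the $q^{4n+2}$-piece onto the opposite cone is $n\mapsto -(n+1)$ (because $3n^2+5n+2=3(n+1)^2-(n+1)$), and since this flips $(-1)^n$ it absorbs the minus sign of $(1-q^{4n+2})$: the two cones recombine with a \emph{common} sign, i.e.\ into type-II symmetry. The relevant quadratic identity is $3n^2+n-m^2=3(r+s)^2+(r+s)-(r-s)^2$ under $(r,s)=(\tfrac{n+m}{2},\tfrac{n-m}{2})$, which is integral only when $n\equiv m\pmod2$; on that residue class it is a bijection of the two cones onto $\{r,s\ge0\}\cup\{r,s<0\}$, the sign $(-1)^{m+n}$ equals $1$, and the contribution is precisely $g_{1,2,1}(-q^3,-q^3;q^4)$. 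On the complementary class one first writes $3n^2+n=3(n-1)^2+7(n-1)+4$, so that $(r,s)=(\tfrac{n-1+m}{2},\tfrac{n-1-m}{2})$ is integral; the same cone count makes it a bijection onto $\{r,s\ge0\}\cup\{r,s<0\}$, the sign becomes $(-1)^{m+n}=-1$, and $3n^2+n-m^2$ equals $4$ plus the exponent $3(r+s)^2+7(r+s)-(r-s)^2$ of $g_{1,2,1}(-q^9,-q^9;q^4)$, so this class contributes $-q^4 g_{1,2,1}(-q^9,-q^9;q^4)$. Summing the two classes gives the stated formula.

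The individual steps are routine; the real work — and the only place to slip — is the region bookkeeping: checking that, after the split, the re-index, and the (parity-dependent) linear substitution, the image of the index set is \emph{exactly} a cone $\{r,s\ge0\}$ or $\{r,s<0\}$ with no lattice point dropped or double-counted, and that the residual sign and the residual power of $q$ (the $q^4$, and the passage $-q^3\leadsto -q^9$, in the third identity) come out correctly on each cone and each residue class. Keeping these boundary conditions consistent across the two cones — and, in the $g$ case, across the two parities — is the crux.
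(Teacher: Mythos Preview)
Your argument is correct and follows exactly the change-of-variables technique that Section~\ref{calculations} describes: the substitution $(r,s)=(n-m,m)$ (equivalently $n=r+s$, $m=s$) for the triangular sums in \eqref{Liu443-negative}--\eqref{Liu443-positive}, and the substitution $r=\tfrac{n+m}{2}$, $s=\tfrac{n-m}{2}$ with a parity split for the third identity, together with the fold $r\leftarrow -r-1$, $s\leftarrow -s-1$ (which you phrase equivalently as $n\mapsto -(n+c)$ in the original coordinates) to send the $q^{Ln+L}$-weighted piece onto the opposite cone. The only cosmetic difference is the order of operations---you fold first in $(n,m)$ and then change variables, whereas the paper changes variables first and folds in $(r,s)$---but the bijections, exponent identities, and sign bookkeeping are identical, so this is the paper's proof.
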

\noindent In Section \ref{section:Andrews-mock}, we will show that $f_{4,4,3}( q^3, q^2;q)$ is a known mock theta function and that $f_{4,4,3}( -q^3, -q^2;q)$ is a new mixed mock modular form.  Indeed, we will see that  Andrews' \cite[(1.11)]{A1} can be expressed
\begin{equation}
\sum_{n=0}^\infty \frac{q^{n^2}(-q;q)^2_n}{(q;q)_{2n}}
    =\frac{1}{J_{1}}f_{4,4,3}(-q^3,-q^2;q)
    =\frac{1}{4}\frac{\overline{J}_{0,3}}{J_1}\mu(q^3)
    -\frac{1}{2}\frac{\overline{J}_{1,3}}{J_1}\phi(q)+\frac{1}{J_1}\Theta(q),
\end{equation}
where $\Theta(q)$ is a sum of quotients of theta functions, $\mu(q)$ is a second-order mock theta function, and $\phi(q)$ is a sixth-order mock theta function.  For more information on the respective mock theta functions, we refer the interested reader to \cite[Section $5$]{HM} and references therein.

\section{Preliminaries}
\label{preliminaries}
In this section we collect known properties of Hecke-type double-sums, theta functions, and Appell--Lerch functions that will be used in the proofs of our results.  The following four properties satisfied by double-sums of type I and type II symmetry are given in \cite{HM, Mo21}:

\begin{proposition} \cite[Proposition $6.1$]{HM} We have
\label{f1}
$$f_{a,b,c}(x,y;q) = -\frac{q^{a+b+c}}{xy}f_{a,b,c}(q^{2a+b}/x,q^{2c+b}/y;q).$$
\end{proposition}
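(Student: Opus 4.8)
The plan is to establish the stated functional equation by a single change of summation index in the defining series \eqref{fabc}; once the right substitution is identified the whole argument is bookkeeping.

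First I would apply the substitution $(r,s)\mapsto(-1-r,-1-s)$ to \eqref{fabc}. The point of this particular shift is that it swaps the two index regions appearing in the definition: $r\ge 0$ if and only if $-1-r<0$, and $r<0$ if and only if $-1-r\ge 0$ (likewise for $s$). Since the substitution acts independently on the two coordinates, it carries $\{r,s\ge 0\}$ onto $\{r,s<0\}$ and $\{r,s<0\}$ onto $\{r,s\ge 0\}$, without mixing in the mixed-sign regions (which do not occur in a type~I sum anyway). Hence the operator $\big(\sum_{r,s\ge 0}-\sum_{r,s<0}\big)$ becomes $\big(\sum_{r,s<0}-\sum_{r,s\ge 0}\big)$, which supplies precisely the overall minus sign on the right-hand side.

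Next I would rewrite the summand under this substitution using the elementary identities
\[
\binom{-1-r}{2}=\binom{r}{2}+2r+1,\qquad (-1-r)(-1-s)=rs+r+s+1,\qquad (-1)^{(-1-r)+(-1-s)}=(-1)^{r+s}.
\]
Substituting these into $(-1)^{r+s}x^ry^sq^{a\binom{r}{2}+brs+c\binom{s}{2}}$ and collecting, the terms linear in $r$ contribute a factor $q^{(2a+b)r}$, those linear in $s$ contribute $q^{(2c+b)s}$, and the constant contributions (from the three shifted binomials, from $rs$, and from the $x^{-1}y^{-1}$ coming out of $x^{-1-r}y^{-1-s}$) assemble into the prefactor $q^{a+b+c}/(xy)$. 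Thus the summand becomes
\[
\frac{q^{a+b+c}}{xy}\,(-1)^{r+s}\Big(\tfrac{q^{2a+b}}{x}\Big)^{r}\Big(\tfrac{q^{2c+b}}{y}\Big)^{s}q^{a\binom{r}{2}+brs+c\binom{s}{2}}.
\]

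Reassembling the double-sum, with the region swap from the first step producing the sign, yields $f_{a,b,c}(x,y;q)=-\tfrac{q^{a+b+c}}{xy}f_{a,b,c}(q^{2a+b}/x,q^{2c+b}/y;q)$, which is the claim. I do not anticipate a genuine obstacle: the manipulation is a termwise rearrangement of the defining series, valid wherever \eqref{fabc} converges (and in any case as an identity of formal series), so the only points needing care are verifying that $(r,s)\mapsto(-1-r,-1-s)$ exchanges the two summation regions exactly and keeping the two binomial-coefficient shifts straight.
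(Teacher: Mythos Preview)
Your argument is correct: the substitution $(r,s)\mapsto(-1-r,-1-s)$ swaps the two summation regions in \eqref{fabc}, and your computation of the shifted exponents and prefactor is accurate, yielding exactly the stated functional equation.

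As for comparison with the paper: this proposition appears in Section~\ref{preliminaries} as a quoted result from \cite[Proposition~6.1]{HM} and is not proved in the paper itself. Your direct verification via the reindexing $(r,s)\mapsto(-1-r,-1-s)$ is the standard and natural proof of this identity, and is essentially what one finds in the cited reference.
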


\begin{proposition} \cite[Proposition $6.3$]{HM}  We have
\label{f2}
\begin{equation*}
\begin{split}
f_{a,b,c}(x,y;q) = (-x)^l(-y)^kq^{a\binom{l}{2}+blk+c\binom{k}{2}}f_{a,b,c}(q^{al+bk}x,q^{bl+ck}y;q)\\
+\sum_{m=0}^{l-1}(-x)^mq^{a\binom{m}{2}}j(q^{mb}y;q^c)+\sum_{m=0}^{k-1}(-y)^mq^{c\binom{m}{2}}j(q^{mb}x;q^{a}).
\end{split}
\end{equation*}
\end{proposition}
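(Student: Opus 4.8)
The plan is to peel off one power of $x$ (resp.\ $y$) at a time. The crux is the single-step identity
\begin{equation*}
f_{a,b,c}(x,y;q) = -x\,f_{a,b,c}(q^{a}x,q^{b}y;q) + j(y;q^{c}),
\end{equation*}
which I would prove directly from \eqref{fabc}. Split the ``$r,s\ge 0$'' sum into its $r=0$ column, which is $\sum_{s\ge 0}(-1)^{s}y^{s}q^{c\binom{s}{2}}$, and its $r\ge 1$ part; substituting $r\mapsto r+1$ in the latter and using $\binom{r+1}{2}=\binom{r}{2}+r$ turns it into $-x$ times the ``$r,s\ge 0$'' part of $f_{a,b,c}(q^{a}x,q^{b}y;q)$. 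The mirror operation on the ``$r,s<0$'' sum — substitute $r\mapsto r-1$ (so $\binom{r-1}{2}=\binom{r}{2}-r+1$) and peel off the column that becomes $r=0$ — gives $-x$ times the ``$r,s<0$'' part of $f_{a,b,c}(q^{a}x,q^{b}y;q)$ together with a leftover column $-\sum_{s<0}(-1)^{s}y^{s}q^{c\binom{s}{2}}$. Adding the two leftover columns yields $\sum_{s\in\mathbb{Z}}(-1)^{s}y^{s}q^{c\binom{s}{2}}=j(y;q^{c})$ by the Jacobi triple product, which is the base identity; the rearrangements are legitimate as identities of formal power series.

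Iterating this identity $l$ times (the copies of $\binom{r+1}{2}=\binom{r}{2}+r$ accumulating into $q^{a\binom{l}{2}}$) gives
\begin{equation*}
f_{a,b,c}(x,y;q) = (-x)^{l}q^{a\binom{l}{2}}f_{a,b,c}(q^{al}x,q^{bl}y;q) + \sum_{m=0}^{l-1}(-x)^{m}q^{a\binom{m}{2}}j(q^{mb}y;q^{c}).
\end{equation*}
Since $f_{a,b,c}(x,y;q)=f_{c,b,a}(y,x;q)$ (relabel $r\leftrightarrow s$ in \eqref{fabc}), applying this to $f_{c,b,a}$ and translating back gives the $s$-direction companion, and iterating it $k$ times yields
\begin{equation*}
f_{a,b,c}(x,y;q) = (-y)^{k}q^{c\binom{k}{2}}f_{a,b,c}(q^{bk}x,q^{ck}y;q) + \sum_{m=0}^{k-1}(-y)^{m}q^{c\binom{m}{2}}j(q^{mb}x;q^{a}).
\end{equation*}
Substituting the second display, applied at the arguments $(q^{al}x,q^{bl}y)$, into the first: the main terms combine to $(-x)^{l}(-y)^{k}q^{a\binom{l}{2}+blk+c\binom{k}{2}}f_{a,b,c}(q^{al+bk}x,q^{bl+ck}y;q)$, and the first theta sum $\sum_{m=0}^{l-1}(-x)^{m}q^{a\binom{m}{2}}j(q^{mb}y;q^{c})$ appears verbatim.

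The remaining piece comes out as $(-x)^{l}q^{a\binom{l}{2}}\sum_{m=0}^{k-1}(-y)^{m}q^{blm+c\binom{m}{2}}j(q^{al+mb}x;q^{a})$, not yet in the claimed shape. To finish I would apply the elementary theta transformation $j(q^{n}z;q)=(-1)^{n}q^{-\binom{n}{2}}z^{-n}j(z;q)$ — immediate from the series for $j$ together with $\binom{m}{2}+nm=\binom{m+n}{2}-\binom{n}{2}$ — with base $q^{a}$, $n=l$, and $z=q^{mb}x$; this cancels the prefactor $(-x)^{l}q^{a\binom{l}{2}}$ and the $q^{blm}$, and converts $j(q^{al+mb}x;q^{a})$ into $j(q^{mb}x;q^{a})$, leaving exactly $\sum_{m=0}^{k-1}(-y)^{m}q^{c\binom{m}{2}}j(q^{mb}x;q^{a})$. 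I expect the only genuinely delicate point to be the base identity: keeping all signs and $q$-exponents straight under $r\mapsto r\pm 1$ on both quadrants, and seeing that the two-sided theta function $j(y;q^{c})$ is assembled from the $r=0$ column of the positive part together with the column that is exposed in the negative part after the shift. Everything after that is bookkeeping plus one standard theta identity.
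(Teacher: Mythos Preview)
Your argument is correct. The single-step identity, its iteration in each variable, and the final reduction via $j(q^{al}z;q^{a})=(-1)^{l}q^{-a\binom{l}{2}}z^{-l}j(z;q^{a})$ all check out, and the cross-term indeed collapses exactly to $\sum_{m=0}^{k-1}(-y)^{m}q^{c\binom{m}{2}}j(q^{mb}x;q^{a})$.

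There is nothing in the present paper to compare against: this proposition is stated in the preliminaries as a quotation of \cite[Proposition~6.3]{HM} and is not proved here. Your proof is therefore an independent (and entirely standard) derivation, supplying what the paper merely cites. If anything, one could shorten your write-up by observing that once the $l$-step identity in the $x$-direction is established, the $k$-step identity in the $y$-direction follows by the symmetry $f_{a,b,c}(x,y;q)=f_{c,b,a}(y,x;q)$ without a separate iteration, which you already note.
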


\begin{proposition} \cite[Proposition $2.4$]{Mo21} We have
\label{g1}
$$g_{a,b,c}(x,y;q) = \frac{q^{a+b+c}}{xy}g_{a,b,c}(q^{2a+b}/x,q^{2c+b}/y;q).$$
\end{proposition}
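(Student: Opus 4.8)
The plan is to prove Proposition \ref{g1} in exactly the same way one proves its type I analogue Proposition \ref{f1}: by applying a single reindexing to the defining double-sum \eqref{gabc}, namely the involution $(r,s)\mapsto(-1-r,-1-s)$, and then reading off the result. No previously stated result is needed; the whole argument is a term-by-term rearrangement, understood at the level of formal series as is customary for type II sums.

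First I would check that $(r,s)\mapsto(-1-r,-1-s)$ maps the index set of \eqref{gabc}, that is $\{(r,s):r,s\geq 0\}\cup\{(r,s):r,s<0\}$, bijectively onto itself while interchanging the two quadrants: if $r,s\geq 0$ then $-1-r,-1-s\leq -1<0$, and conversely, and the map is visibly its own inverse, so there are no boundary issues to worry about. Next I would record how a general summand transforms under $r'=-1-r$, $s'=-1-s$. The sign is unchanged since $(-1)^{r'+s'}=(-1)^{-2-r-s}=(-1)^{r+s}$; the monomial becomes $x^{-1-r}y^{-1-s}=(xy)^{-1}x^{-r}y^{-s}$; and for the quadratic exponent the elementary identities $\binom{-1-r}{2}=\binom{r}{2}+2r+1$ and $(-1-r)(-1-s)=1+r+s+rs$ give
$$a\binom{r'}{2}+br's'+c\binom{s'}{2}=a\binom{r}{2}+brs+c\binom{s}{2}+(a+b+c)+(2a+b)r+(2c+b)s.$$
Collecting these, the summand indexed by $(r',s')$ in $g_{a,b,c}(x,y;q)$ equals $\tfrac{q^{a+b+c}}{xy}$ times the summand indexed by $(r,s)$ in $g_{a,b,c}(q^{2a+b}/x,q^{2c+b}/y;q)$.

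Finally I would reassemble the sum. Because the reindexing swaps the quadrant $r,s\geq 0$ with the quadrant $r,s<0$, and because \eqref{gabc} carries a $+$ sign between the two quadrant-sums (in contrast to the $-$ sign in \eqref{fabc}), the swap leaves the overall combination unchanged, so one obtains $g_{a,b,c}(x,y;q)=\tfrac{q^{a+b+c}}{xy}g_{a,b,c}(q^{2a+b}/x,q^{2c+b}/y;q)$ with no sign flip; running the identical computation on \eqref{fabc} reproduces the extra minus sign in Proposition \ref{f1}. The only thing requiring care is the bookkeeping — tracking which quadrant maps to which, and the routine exponent algebra above — so I do not expect a genuine obstacle; the main point to state cleanly is just why the two quadrant index sets interchange exactly.
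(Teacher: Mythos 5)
Your proof is correct: the involution $(r,s)\mapsto(-1-r,-1-s)$ swaps the two quadrants, the exponent and sign bookkeeping is exactly as you state, and the $+$ between the quadrant sums in \eqref{gabc} is why no sign flip appears, in contrast to Proposition \ref{f1}. The paper itself gives no proof here but simply cites \cite[Proposition 2.4]{Mo21}, and your reindexing argument is precisely the standard proof used for this and its type I analogue, so you are on the same route.
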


\begin{proposition} \cite[Proposition $2.5$]{Mo21} We have
\label{g2}
\begin{equation*}
\begin{split}
g_{a,b,c}(x,y;q) = (-x)^l(-y)^kq^{a\binom{l}{2}+blk+c\binom{k}{2}}g_{a,b,c}(q^{al+bk}x,q^{bl+ck}y;q)\\
+\sum_{r=0}^{l-1}(-x)^rq^{a\binom{r}{2}}\sum_{s \in \mathbb{Z}}\sg(s)(-y)^sq^{brs}q^{c\binom{s}{2}},
\end{split}
\end{equation*}
where
\begin{equation*}
\sg(x):=\begin{cases}1 &\text{if}\ x\geq 0,\\
-1 &\text{if}\ x<1.
\end{cases}
\end{equation*}
\end{proposition}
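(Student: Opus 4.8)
The plan is to adapt the proof of Proposition~\ref{f2} for type~I sums to the type~II sum $g$, replacing the theta-function boundary terms $j(\,\cdot\,)$ of that proposition by $\sg$-weighted bilateral sums. The engine is a one-step recursion, which I would establish first:
\begin{equation*}
g_{a,b,c}(x,y;q)=-x\,g_{a,b,c}(q^{a}x,q^{b}y;q)+\sum_{s\in\mathbb{Z}}\sg(s)(-y)^{s}q^{c\binom{s}{2}}.
\end{equation*}
To prove it, split the defining sum \eqref{gabc} into its $r,s\ge 0$ piece and its $r,s<0$ piece. From the $r,s\ge 0$ piece peel off the $r=0$ column, which contributes $\sum_{s\ge 0}(-y)^{s}q^{c\binom{s}{2}}$, and re-index $r\mapsto r+1$ in the remainder; using $\binom{r+1}{2}=\binom{r}{2}+r$, and absorbing the cross-term through the shift $y\mapsto q^{b}y$, the remainder becomes $-x$ times the $r,s\ge 0$ piece of $g_{a,b,c}(q^{a}x,q^{b}y;q)$. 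The $r,s<0$ piece has no $r=0$ column, but comparing it with the $r,s<0$ piece of $g_{a,b,c}(q^{a}x,q^{b}y;q)$ under the same re-index shows that the shifted function overshoots by an extra $r=0,\ s<0$ column; solving for the $r,s<0$ piece of $g_{a,b,c}(x,y;q)$ then yields $-x$ times the $r,s<0$ piece of the shifted function together with $-\sum_{s<0}(-y)^{s}q^{c\binom{s}{2}}$. Recombining the two pieces, the boundary columns fuse into $\sum_{s\ge 0}(-y)^{s}q^{c\binom{s}{2}}-\sum_{s<0}(-y)^{s}q^{c\binom{s}{2}}=\sum_{s\in\mathbb{Z}}\sg(s)(-y)^{s}q^{c\binom{s}{2}}$, which is the asserted recursion.

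Iterating this recursion $l$ times is an immediate induction: at the $r$-th step the coefficient $(-x)^{r}q^{a\binom{r}{2}}$ is built up from repeated use of $\binom{r+1}{2}=\binom{r}{2}+r$, while the replacement $y\mapsto q^{br}y$ reinstates the cross-term $q^{brs}$ inside the $r$-th boundary sum, so one obtains exactly the stated identity in the case $k=0$. For general $k$ one brings in the companion recursion in the $y$-variable, obtained from the one above via the symmetry $g_{a,b,c}(x,y;q)=g_{c,b,a}(y,x;q)$ (immediate from \eqref{gabc}); it reads
\begin{equation*}
g_{a,b,c}(x,y;q)=-y\,g_{a,b,c}(q^{b}x,q^{c}y;q)+\sum_{r\in\mathbb{Z}}\sg(r)(-x)^{r}q^{a\binom{r}{2}}.
\end{equation*}
Applying this $k$ times to the residual term $g_{a,b,c}(q^{al}x,q^{bl}y;q)$ produces the prefactor $(-x)^{l}(-y)^{k}q^{a\binom{l}{2}+blk+c\binom{k}{2}}$ and the shifted arguments $q^{al+bk}x,\ q^{bl+ck}y$, exactly as in Proposition~\ref{f2}; one is then left to collect the resulting boundary contributions into the single sum displayed in the statement.

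The collection step is where care is needed, and it is the one genuinely new feature compared with type~I. When the $y$-recursion is applied to $g_{a,b,c}(q^{al}x,q^{bl}y;q)$, its $x$-argument already carries the shift $q^{al}$, so the boundary sums it generates are most naturally rewritten after the substitution $\rho=r+l$; under that substitution the weight $\sg(r)$ turns into $\sg(\rho-l)$ rather than $\sg(\rho)$. Since $\sg(\rho-l)$ and $\sg(\rho)$ agree except on $0\le\rho\le l-1$ (where the former equals $-1$ and the latter $+1$), one must rewrite these sums in terms of honest $\sg$-weighted bilateral sums, which then combine with the boundary sum already present, all the while tracking the compensating powers of $q$ (for example $q^{bls}q^{-bsl}=1$ appears here). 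This is the analogue of the two-index bookkeeping in Proposition~\ref{f2}, the one difference being that the sign function $\sg$, unlike the theta functions $j(q^{mb}y;q^{c})$ there, does not simply absorb a shift of its summation variable; everything else reduces to routine tracking of $q$-exponents via $\binom{m+1}{2}=\binom{m}{2}+m$.
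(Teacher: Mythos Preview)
The paper does not supply its own proof of this proposition; it is quoted from \cite{Mo21} as a preliminary and is not used elsewhere in the paper, so there is nothing here to compare your argument against.

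On the substance of your proposal: your one-step recursion
\[
g_{a,b,c}(x,y;q)=-x\,g_{a,b,c}(q^{a}x,q^{b}y;q)+\sum_{s\in\mathbb{Z}}\sg(s)(-y)^{s}q^{c\binom{s}{2}}
\]
is correct, and iterating it $l$ times gives exactly the displayed identity in the case $k=0$. The gap is in your final paragraph. You claim that after applying the $y$-recursion $k$ times to $g_{a,b,c}(q^{al}x,q^{bl}y;q)$ and substituting $\rho=r+l$, the resulting $\sg(\rho-l)$-weighted sums ``combine with the boundary sum already present'' into the single $\sum_{r=0}^{l-1}$ sum of the statement. They do not. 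Writing $\sg(\rho-l)=\sg(\rho)-2$ for $0\le\rho\le l-1$ and $\sg(\rho-l)=\sg(\rho)$ otherwise, and summing the $y$-boundary contributions over $m=0,\dots,k-1$, one obtains
\[
\sum_{m=0}^{k-1}(-y)^{m}q^{c\binom{m}{2}}\sum_{\rho\in\mathbb{Z}}\sg(\rho)(-x)^{\rho}q^{a\binom{\rho}{2}+bm\rho}
\;-\;2\sum_{\rho=0}^{l-1}\sum_{m=0}^{k-1}(-x)^{\rho}(-y)^{m}q^{a\binom{\rho}{2}+b\rho m+c\binom{m}{2}},
\]
and neither piece is absorbed by the $\sum_{r=0}^{l-1}$ sum you already have: the first is the $g$-analogue of the \emph{second} boundary sum in Proposition~\ref{f2}, and the second depends jointly on $l$ and $k$. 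In fact the displayed identity is already false at $l=0$, $k=1$: the $r$-sum is empty, so the statement would give $g_{a,b,c}(x,y;q)=-y\,g_{a,b,c}(q^{b}x,q^{c}y;q)$, which contradicts your own $y$-recursion. Thus either the proposition as transcribed here is meant only for $k=0$ (in which case your argument is complete and the last paragraph should simply be dropped), or the quotation is missing the symmetric $\sum_{m=0}^{k-1}$ boundary sum together with the cross term above; in either case the ``routine tracking'' you describe cannot produce the single sum shown.
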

Next we mention a useful theorem from \cite{HM}.
\begin{theorem} \cite[Proposition 8.1]{HM}
\label{f131decomposition}
Let $l\in \mathbf{Z}$ and $x,y \in \mathbf{C}\backslash \{ 0 \}$. Then we have
\begin{equation*}
\begin{split}
f_{1,3,1}(x,y;q) = j(y;q)m(-q^5x/y^3,q^{2l}y/x;q^8)
+j(x;q)m(-q^5y/x^3,x/q^{2l}y;q^8)\\
-(-1)^l\frac{q^{4l+1+\binom{l}{2}}x^{l+1}yJ_{2,4}J_{8,16}j(q^{4l+3}xy;q^8)j(q^{8l+14}x^2y^2;q^{16})}{j(-q^{2l+3}x^2;q^8)j(-q^{6l+3}y^2;q^8)}.
\end{split}
\end{equation*}
\end{theorem}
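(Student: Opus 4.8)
The plan is to follow the indefinite-theta machinery of Hickerson and Mortenson \cite{HM}. Write $Q(r,s)=\binom r2+3rs+\binom s2$, so that $f_{1,3,1}(x,y;q)=\big(\sum_{r,s\ge 0}-\sum_{r,s<0}\big)(-1)^{r+s}x^ry^sq^{Q(r,s)}$. The quadratic part $r^2+6rs+s^2$ is indefinite with discriminant $b^2-ac=8$, and its null directions have irrational slope $-3\pm2\sqrt2$, so no nonzero lattice point lies on the light cone. Hence the region $\{r,s\ge 0\}\cup\{r,s<0\}$ can be partitioned, along a rational-slope line separating the two null rays, into finitely many one-dimensional progressions on each of which $q^{Q(r,s)}$ decays geometrically in $q$; this both makes the double sum a genuine $q$-series and is where the Appell--Lerch functions will come from.

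First I would record the trivial symmetry $f_{1,3,1}(x,y;q)=f_{1,3,1}(y,x;q)$ (swap $r\leftrightarrow s$) and use the functional identities of Propositions~\ref{f1} and~\ref{f2} to reduce $f_{1,3,1}(x,y;q)$, modulo finitely many theta functions, to Hecke sums whose inner index runs over all of $\mathbf Z$. Summing that inner index geometrically turns each such piece into a bilateral series of the form $\sum_{r\in\mathbf Z}\frac{(-1)^rq^{8\binom r2}Z^r}{1-q^{8(r-1)}WZ}=j(Z;q^8)\,m(W,Z;q^8)$, matching the modulus $q^{b^2-ac}=q^8$. After identifying arguments and exponents these assemble into the two advertised terms $j(y;q)\,m(-q^5x/y^3,q^{2l}y/x;q^8)$ and $j(x;q)\,m(-q^5y/x^3,x/q^{2l}y;q^8)$; note that the substitution $x\leftrightarrow y$ together with $l\mapsto -l$ exchanges these two terms, consistent with the symmetry above. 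The finitely many leftover pieces --- those peeled off by Proposition~\ref{f2} and those arising from the boundary of the region --- are then collapsed, using the Jacobi triple product and the elliptic transformation $j(q^nx;q)=(-1)^nq^{-\binom n2}x^{-n}j(x;q)$, into the single theta quotient displayed in the statement.

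The parameter $l$ is a bookkeeping artifact: the construction above produces the identity for one particular value of $l$ (whichever one is used to split the region), and one then promotes it to all $l\in\mathbf Z$ by showing the right-hand side does not depend on $l$. Replacing $l$ by $l+1$ scales the second argument of each $m$ by $q^{\pm2}$ inside modulus $q^8$; by the ``change of second argument'' formula for $m(x,z;q)$ (itself a consequence of the defining series), $m(W,q^2Z;q^8)-m(W,Z;q^8)$ is an explicit quotient of theta functions, and a direct computation shows this exactly cancels the change in the displayed correction term, so the whole right-hand side is $l$-invariant. Applying Proposition~\ref{f1} to the right-hand side and checking it maps to itself is a convenient extra consistency check, as is the $x\leftrightarrow y$ symmetry once the correction term is simplified by the same theta identities.

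The main obstacle is not the Appell--Lerch functions, which fall out almost automatically from the geometric summation, but the precise evaluation of the theta-quotient correction: tracking exactly which peeled and boundary terms survive, with the correct sign $(-1)^l$, the correct power $q^{4l+1+\binom l2}$, the correct monomial $x^{l+1}y$, and regrouping the Jacobi factors into $J_{2,4}\,J_{8,16}$ times the stated ratio of four theta functions. This is delicate but routine once the region split is fixed; the only genuinely subtle analytic point is performing the geometric expansions on the correct annulus in $q$ (and range of $|x|,|y|$) so that the rearrangements are legitimate, after which analytic continuation yields the identity for all $x,y\in\mathbf C\setminus\{0\}$.
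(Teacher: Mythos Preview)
The paper does not prove this statement at all: Theorem~\ref{f131decomposition} is simply quoted from \cite[Proposition~8.1]{HM} in the preliminaries section and then used as a black box in Sections~\ref{f131Thsection} and~\ref{f131q3f131section}. So there is no ``paper's own proof'' to compare against.

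As a sketch of the Hickerson--Mortenson argument your outline is broadly accurate --- the discriminant $D=8$, the modulus $q^8$, the symmetry under $x\leftrightarrow y$ with $l\mapsto -l$, and the mechanism by which geometric summation along progressions produces the two Appell--Lerch terms are all correct. But what you have written is a strategy, not a proof. The entire content of the identity lives in the explicit theta-quotient correction term, and you do not compute it: you say the bookkeeping is ``delicate but routine once the region split is fixed,'' but you never fix the split, never specify which boundary pieces survive, and never verify the sign $(-1)^l$, the power $q^{4l+1+\binom{l}{2}}$, the monomial $x^{l+1}y$, or the particular combination $J_{2,4}J_{8,16}\,j(q^{4l+3}xy;q^8)\,j(q^{8l+14}x^2y^2;q^{16})/\big(j(-q^{2l+3}x^2;q^8)j(-q^{6l+3}y^2;q^8)\big)$. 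Likewise the claim that the $l$-dependence cancels via the change-of-$z$ formula for $m(x,z;q)$ is asserted but not checked. These are exactly the steps where errors occur in practice; without them you have reproduced the table of contents of \cite{HM} rather than a proof.
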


We will also need the following equation from \cite{HM}, which we state here for convenience as a theorem
\begin{theorem} \cite[(2.2f)]{HM} We have
$$j(z;q)=\sum_{k=0}^{m-1}(-1)^kq^{\binom{k}{2}}z^kj((-1)^{m+1}q^{\binom{m}{2}+mk}z^{m};q^{m^2}).$$
\end{theorem}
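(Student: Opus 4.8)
The plan is to read the identity off from the Jacobi triple product $j(z;q)=\sum_{n\in\mathbf{Z}}(-1)^nq^{\binom{n}{2}}z^n$ together with a single reindexing, namely the $m$-dissection writing each integer uniquely as $n=k+m\ell$ with $0\le k\le m-1$ and $\ell\in\mathbf{Z}$ (division with remainder by $m$; only $m\ge1$ is needed).

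First I would put the right-hand side in series form.  Expanding each inner theta function by the triple product gives
$$j\bigl((-1)^{m+1}q^{\binom{m}{2}+mk}z^{m};q^{m^2}\bigr)=\sum_{\ell\in\mathbf{Z}}(-1)^{\ell}q^{m^2\binom{\ell}{2}}\bigl((-1)^{m+1}q^{\binom{m}{2}+mk}z^{m}\bigr)^{\ell},$$
and the key simplification is that the sign collapses, since $(-1)^{\ell}(-1)^{(m+1)\ell}=(-1)^{(m+2)\ell}=(-1)^{m\ell}$.  Substituting this back, the right-hand side becomes the double sum
$$\sum_{k=0}^{m-1}\sum_{\ell\in\mathbf{Z}}(-1)^{k+m\ell}\,q^{\binom{k}{2}+m^2\binom{\ell}{2}+(\binom{m}{2}+mk)\ell}\,z^{k+m\ell}.$$

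Next I would substitute $n=k+m\ell$.  Since $(k,\ell)\mapsto k+m\ell$ is a bijection from $\{0,\dots,m-1\}\times\mathbf{Z}$ onto $\mathbf{Z}$, it then remains only to verify that the $(k,\ell)$-term carries sign $(-1)^n$, monomial $z^n$, and $q$-power $q^{\binom{n}{2}}$.  The sign and the power of $z$ are immediate from $n=k+m\ell$.  For the exponent of $q$ the one algebraic identity needed is $m^2\binom{\ell}{2}+\binom{m}{2}\ell=\binom{m\ell}{2}$, after which $\binom{k}{2}+mk\ell+\binom{m\ell}{2}=\binom{k+m\ell}{2}$ delivers exactly $\binom{n}{2}$.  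Summing over all $(k,\ell)$ then recovers $\sum_{n\in\mathbf{Z}}(-1)^nq^{\binom{n}{2}}z^n=j(z;q)$.

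I expect the only real obstacle to be bookkeeping: keeping the parity of $(m+1)\ell$ straight so that the sign genuinely collapses to $(-1)^{m\ell}$, and expanding $\binom{k+m\ell}{2}$ without slips.  There is no analytic difficulty, since for $|q|<1$ and $z\neq0$ all the series converge absolutely (or one may argue entirely at the level of formal Laurent series in $z$), so the term-by-term rearrangement is justified.
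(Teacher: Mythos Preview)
Your argument is correct. The paper does not supply its own proof of this identity; it simply quotes the result from \cite[(2.2f)]{HM} and then records the $m=2$ special case as a corollary. What you have written---the $m$-dissection of the Jacobi triple product series $j(z;q)=\sum_{n\in\mathbf{Z}}(-1)^nq^{\binom{n}{2}}z^n$ via $n=k+m\ell$, together with the two binomial identities $m^2\binom{\ell}{2}+\binom{m}{2}\ell=\binom{m\ell}{2}$ and $\binom{k}{2}+km\ell+\binom{m\ell}{2}=\binom{k+m\ell}{2}$---is exactly the standard proof one would expect, and all the bookkeeping (sign collapse, exponent matching, bijectivity of the reindexing) is handled cleanly. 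There is nothing to compare against here beyond noting that you have filled in a proof the paper chose to outsource.
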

\noindent For $m=2$ we get the following corollary:
\begin{corollary}
\begin{equation}
\label{jinto2sums}
j(z;q)=j(-qz^2;q^4) - zj(-q^3z^2;q^4).
\end{equation}
\end{corollary}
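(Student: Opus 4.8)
The plan is to read off \eqref{jinto2sums} as the special case $m=2$ of the identity \cite[(2.2f)]{HM} quoted immediately above, namely
$$j(z;q)=\sum_{k=0}^{m-1}(-1)^kq^{\binom{k}{2}}z^kj\big((-1)^{m+1}q^{\binom{m}{2}+mk}z^{m};q^{m^2}\big).$$
First I would substitute $m=2$, so that the outer sum collapses to the two terms $k=0$ and $k=1$ and the inner modulus becomes $q^{m^2}=q^4$. Then I would evaluate the relevant small binomial coefficients: $\binom{2}{2}=1$, $(-1)^{m+1}=(-1)^3=-1$, and $\binom{0}{2}=\binom{1}{2}=0$. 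With these, the $k=0$ term is $(-1)^0q^0z^0\,j(-q^{1+0}z^2;q^4)=j(-qz^2;q^4)$ and the $k=1$ term is $(-1)^1q^0z^1\,j(-q^{1+2}z^2;q^4)=-z\,j(-q^3z^2;q^4)$. Adding the two terms yields exactly $j(z;q)=j(-qz^2;q^4)-z\,j(-q^3z^2;q^4)$.

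The only thing to verify is this short finite computation with the binomial coefficients, so there is no genuine obstacle; the step most prone to a slip is keeping track of the sign $(-1)^{m+1}$ and the exponent $\binom{m}{2}+mk$ inside the theta argument, which I would write out explicitly as above.

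For completeness one could also argue directly from the series side of the Jacobi triple product, $j(z;q)=\sum_{n\in\mathbf{Z}}(-1)^nq^{\binom{n}{2}}z^n$, by splitting the sum according to the parity of $n$. Setting $n=2k$ gives $\sum_{k}q^{\binom{2k}{2}}z^{2k}=\sum_{k}q^{2k^2-k}z^{2k}$, and expanding $j(-qz^2;q^4)=\sum_k(-1)^kq^{4\binom{k}{2}}(-qz^2)^k$ by the triple product shows this equals $j(-qz^2;q^4)$; similarly $n=2k+1$ gives $-z\sum_{k}q^{\binom{2k+1}{2}}z^{2k}=-z\sum_{k}q^{2k^2+k}z^{2k}=-z\,j(-q^3z^2;q^4)$. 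Matching the exponents of $q$ in each parity class is again a brief computation, and it is the one place to be careful, but it is routine.
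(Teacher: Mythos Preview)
Your proposal is correct and matches the paper's approach exactly: the paper presents this corollary simply as the specialization $m=2$ of the cited identity \cite[(2.2f)]{HM}, and your first argument carries out that substitution carefully. Your alternative parity-splitting argument via the triple product is also fine and is essentially how one would prove the general identity itself, but it is not needed here.
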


Next we need the following properties of the Appell--Lerch function $m(x,z;q)$ (defined in the introduction):

\begin{proposition} \cite[Proposition $3.1$]{HM} We have
\begin{subequations}
\begin{align}
\label{mrelation1}
m(x,z;q) &= x^{-1}m(x^{-1},z^{-1};q),\\
\label{mrelation2}
m(qx,z;q) &=1-xm(x,z;q).
\end{align}
\end{subequations}
\end{proposition}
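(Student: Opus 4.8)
The plan is to prove both identities by direct manipulation of the bilateral series defining $m(x,z;q)$, using only shifts of the summation index, the tautology $\sum_{r}(-1)^{r}q^{\binom{r}{2}}z^{r}=j(z;q)$, and the theta reflection $j(z^{-1};q)=-z^{-1}j(z;q)$ (immediate from the product form $j(x;q)=(x)_{\infty}(q/x)_{\infty}(q)_{\infty}$, shifting the index of the first factor). One works with $|q|<1$ and with $x,z$ generic, so that no denominator $1-q^{r-1}xz$ vanishes; since $q^{\binom{r}{2}}$ forces super-exponential decay in both directions, every rearrangement and term-by-term split used below converges absolutely and is therefore valid. Equivalently the identities may be read in the ring of rational functions of $x$ over $\mathbb{C}[[q]]$.

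For \eqref{mrelation2} I would start from $m(qx,z;q)=\frac{1}{j(z;q)}\sum_{r}\frac{(-1)^{r}q^{\binom{r}{2}}z^{r}}{1-q^{r}xz}$ (using $q^{r-1}(qx)z=q^{r}xz$) and split $\frac{1}{1-w}=1+\frac{w}{1-w}$ with $w=q^{r}xz$. The ``$1$'' piece gives $\frac{1}{j(z;q)}\sum_{r}(-1)^{r}q^{\binom{r}{2}}z^{r}=1$. In the remaining piece the numerator is $(-1)^{r}q^{\binom{r}{2}+r}xz^{r+1}=(-1)^{r}q^{\binom{r+1}{2}}xz^{r+1}$, and the reindexing $s=r+1$ turns $\sum_{r}\frac{(-1)^{r}q^{\binom{r+1}{2}}xz^{r+1}}{1-q^{r}xz}$ into $-x\sum_{s}\frac{(-1)^{s}q^{\binom{s}{2}}z^{s}}{1-q^{s-1}xz}=-x\,j(z;q)m(x,z;q)$; dividing by $j(z;q)$ this piece equals $-x\,m(x,z;q)$, and adding the two pieces yields $m(qx,z;q)=1-x\,m(x,z;q)$.

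For \eqref{mrelation1} I would substitute $x\mapsto x^{-1}$, $z\mapsto z^{-1}$ in the definition and then reverse the summation index, $r\mapsto -r$. Using $\binom{-r}{2}=\binom{r}{2}+r$ and clearing the denominator $1-q^{-r-1}x^{-1}z^{-1}$ by multiplying numerator and denominator by $q^{r+1}xz$, which turns the denominator into $-(1-q^{r+1}xz)$, the general term becomes $\frac{(-1)^{r+1}q^{\binom{r}{2}+2r+1}xz^{r+1}}{1-q^{r+1}xz}=\frac{(-1)^{r+1}q^{\binom{r+2}{2}}xz^{r+1}}{1-q^{r+1}xz}$. The substitution $s=r+2$ then collapses the sum to $-xz^{-1}\sum_{s}\frac{(-1)^{s}q^{\binom{s}{2}}z^{s}}{1-q^{s-1}xz}=-xz^{-1}j(z;q)m(x,z;q)$, so $m(x^{-1},z^{-1};q)=\frac{-xz^{-1}j(z;q)m(x,z;q)}{j(z^{-1};q)}$; the reflection $j(z^{-1};q)=-z^{-1}j(z;q)$ reduces the prefactor to $x$, giving $m(x^{-1},z^{-1};q)=x\,m(x,z;q)$, which is \eqref{mrelation1}.

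Since each step is forced, the only genuine obstacle is bookkeeping: one must repeatedly rewrite $\binom{r}{2}$ under shifts of the dummy variable---essentially the identities $\binom{r}{2}+r=\binom{r+1}{2}$, $\binom{-r}{2}=\binom{r}{2}+r$, and $\binom{r}{2}+2r+1=\binom{r+2}{2}$---and keep the sign $(-1)^{r}$ and the powers of $q$, $x$, $z$ aligned through the step where a denominator is cleared. There is no analytic subtlety beyond the absolute convergence of the bilateral series for $|q|<1$ away from the poles, which is what legitimizes the split $\frac{1}{1-w}=1+\frac{w}{1-w}$ in the proof of \eqref{mrelation2}.
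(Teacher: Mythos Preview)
Your proof is correct. Both manipulations---the split $\tfrac{1}{1-w}=1+\tfrac{w}{1-w}$ followed by the shift $s=r+1$ for \eqref{mrelation2}, and the reversal $r\mapsto -r$ followed by clearing denominators and shifting $s=r+2$ together with $j(z^{-1};q)=-z^{-1}j(z;q)$ for \eqref{mrelation1}---go through exactly as you describe, and the binomial-coefficient bookkeeping is accurate.

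Note, however, that the present paper does not give its own proof of this proposition: it is quoted verbatim as \cite[Proposition~3.1]{HM} and only \emph{used} later (e.g.\ in Section~\ref{f131Thsection} and in the proof of Proposition~\ref{prop:f443B}). So there is no argument here to compare against; you have supplied a self-contained proof where the paper merely cites one. Your argument is in fact the standard direct verification one finds in the source \cite{HM}, so in that sense the approaches coincide.
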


Here is an identity known to Kronecker mentioned in \cite{Kr1}, \cite[pp. 309-318]{Kr2}:

\begin{proposition}We have
\label{KroneckerIdentity}
\begin{equation}
\left(\sum_{r,s\ge 0} - \sum_{r,s<0}\right)q^{rs}x^ry^s = \frac{(q)^2_{\infty}(xy,q/xy;q)_\infty}{(x,q/x,y,q/y;q)_\infty}.
\end{equation}
\end{proposition}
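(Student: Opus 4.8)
The plan is to regard both sides as meromorphic functions of $x$, with $y$ and $q$ fixed satisfying $|q|<|y|<1$, and to show they satisfy the same first-order $q$-difference equation; a Liouville argument then makes them proportional, and a residue computation identifies the constant of proportionality as $1$. Write $F(x,y)$ for the left-hand side (which is $f_{0,1,0}(-x,-y;q)$). First I would reindex the $\sum_{r,s<0}$ part by $r\mapsto -r-1$, $s\mapsto -s-1$, which turns it into $\frac{q}{xy}F_+(q/x,q/y)$, where $F_+(u,v):=\sum_{r,s\ge 0}q^{rs}u^rv^s=\sum_{r\ge 0}\frac{u^r}{1-q^rv}$; hence $F(x,y)=F_+(x,y)-\frac{q}{xy}F_+(q/x,q/y)$ on the annulus $|q|<|x|<1$. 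The sum $F_+$ is symmetric, and summing one index and peeling off its lowest term gives the two recurrences $F_+(x,y)=\frac{1}{1-x}+yF_+(qx,y)$ and $F_+(x,y)=\frac{1}{1-y}+xF_+(x,qy)$. Feeding these into the displayed formula for $F$ is a short computation and yields the key relation $F(qx,y)=y^{-1}F(x,y)$ (and $F(x,qy)=x^{-1}F(x,y)$ by symmetry). On the other side, write $R(x,y):=\frac{J_1^3\,j(xy;q)}{j(x;q)j(y;q)}$ for the stated product; using $j(qz;q)=-z^{-1}j(z;q)$ one sees immediately that $R(qx,y)=y^{-1}R(x,y)$ as well.

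Next I would consider $H(x,y):=F(x,y)/R(x,y)$, which is then invariant under $x\mapsto qx$. Iterating $F_+(x,y)=\frac{1}{1-x}+yF_+(qx,y)$ shows $F$ is meromorphic on $\mathbb{C}\setminus\{0\}$ with at most simple poles, all located in $q^{\mathbb{Z}}$; and $R$ is meromorphic there with simple poles exactly at $x\in q^{\mathbb{Z}}$ and simple zeros exactly at $x\in q^{\mathbb{Z}}/y$ (note that for $y$ in the given range $q^{\mathbb{Z}}$ avoids the point $y$, so these are as stated). The poles cancel in $H$. The zeros of $R$ are killed by zeros of $F$: indeed $F(q/y,y)=F_+(q/y,y)-F_+(y,q/y)=0$ by symmetry of $F_+$, and then $F(q^n/y,y)=0$ for every $n\in\mathbb{Z}$ by the functional equation. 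Hence $H$ is holomorphic on $\mathbb{C}\setminus\{0\}$; since $H(qx,y)=H(x,y)$, $H$ is bounded on $\mathbb{C}\setminus\{0\}$ by its values on the compact annulus $|q|\le|x|\le 1$, so the singularity at $0$ is removable and Liouville forces $H\equiv C$ with $C=C(q)$ independent of $x$ — and of $y$, by the symmetric argument.

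To determine $C$ I would compare residues at $x=1$. In $F(x,y)=\frac{1}{1-x}+yF_+(qx,y)-\frac{q}{xy}F_+(q/x,q/y)$ the last two terms are holomorphic near $x=1$, so $\lim_{x\to 1}(1-x)F(x,y)=1$. On the right side $j(x;q)=(1-x)(qx;q)_\infty(q/x;q)_\infty(q;q)_\infty$, whence $(1-x)/j(x;q)\to 1/J_1^3$ as $x\to 1$ while $j(xy;q)/j(y;q)\to 1$, so $\lim_{x\to 1}(1-x)R(x,y)=1$ as well. Therefore $C=1$, which is the asserted identity.

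The part I expect to require the most care is the analyticity bookkeeping of the second paragraph: one must track which geometric resummations are legitimate (the formula $F=F_+-\frac{q}{xy}F_+(q/x,q/y)$ holds only on $|q|<|x|<1$, and the two halves never converge simultaneously along a full $q$-orbit), establish the meromorphic continuation of $F$ from iterating its recurrence, and verify the precise matching of the poles at $q^{\mathbb{Z}}$ and of the zeros at $q^{\mathbb{Z}}/y$ so that $H$ is genuinely holomorphic on all of $\mathbb{C}\setminus\{0\}$. The recurrences, the functional equation for $R$, and the residue at $x=1$ are all short and elementary.
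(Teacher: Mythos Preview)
Your argument is correct. The $q$-difference equation $F(qx,y)=y^{-1}F(x,y)$ follows exactly as you indicate (peeling off $s=0$ in $F_{+}$ and using the symmetry of $F_{+}$ to handle the second piece), the matching of poles and zeros is accurate, and the residue computation at $x=1$ cleanly pins down the constant. The one place that merits a little extra writing is precisely the one you flag: when you pass from the annulus $|q|<|x|<1$ to all of $\mathbb{C}\setminus\{0\}$ via the recurrence, you should state explicitly that the iterated recurrence places simple poles only at $x\in\{q^{-n}:n\ge0\}$ for $F_{+}(x,y)$ and at $x\in\{q^{n+1}:n\ge0\}$ for $F_{+}(q/x,q/y)$, so that the combined pole set is exactly $q^{\mathbb{Z}}$ with multiplicity one.

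As for comparison with the paper: there is nothing to compare. The paper does not prove this proposition at all; it records it as a classical identity ``known to Kronecker'' and simply cites Kronecker's original works. Your proof is therefore a genuine addition rather than a variant. It is, in fact, very much in the spirit of how such identities are traditionally established---matching $q$-difference equations, invoking an elliptic/Liouville argument, and fixing the constant by a residue---so it is a natural choice. An alternative route one sometimes sees is a direct partial-fraction expansion of the right-hand side in $x$ (the poles at $x=q^{n}$ have easily computed residues, and summing them reproduces the left-hand side); that avoids the continuation bookkeeping at the cost of a separate convergence justification. Either approach is acceptable here.
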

\noindent
Note that the left-hand side is nothing but $f_{0,1,0}(-x,-y;q)$.

We will also have a brief occasion to use the following identities for theta functions,  which we state as a proposition
\begin{proposition} \cite[(2.2b), (2.2c)]{HM} We have
\label{jIdentities}
\begin{subequations}
\begin{align}
\label{jIdentityA}
j(x;q) = j(q/x;q),\\
\label{jIdentityB}
j(qx;q) = -x^{-1}j(x;q),
\end{align}
\end{subequations}
\begin{equation}
\label{jx2Identity}
j(x^2;q^2) = j(x;q)j(-x;q)\frac{J_2}{J_1^2} = j(x;q)j(-x;q)\frac{(q^2;q^2)_\infty}{(q;q)_{\infty}^2},
\end{equation}
\begin{equation}
\label{jxToProduct}
j(x;q) = j(x;q^2)j(qx;q^2)\frac{J_1}{J_2^2}.
\end{equation}
\end{proposition}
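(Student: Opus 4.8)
The plan is to obtain all four identities directly from the product form $j(x;q)=(x;q)_\infty(q/x;q)_\infty(q;q)_\infty$ recorded in the introduction, together with two elementary splittings of $q$-Pochhammer symbols valid for $|q|<1$: the factorization $(a;q)_\infty(-a;q)_\infty=(a^2;q^2)_\infty$ and the base-doubling $(a;q)_\infty=(a;q^2)_\infty(aq;q^2)_\infty$. Recall that in the paper's notation $J_1=(q;q)_\infty$ and $J_2=(q^2;q^2)_\infty$, so in each case the asserted right-hand side is just an infinite product and the identity amounts to a finite rearrangement of absolutely convergent products.

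For \eqref{jIdentityA} I would simply substitute $q/x$ for $x$ in the product form to get $j(q/x;q)=(q/x;q)_\infty(x;q)_\infty(q;q)_\infty=j(x;q)$; equivalently, replacing $n$ by $-n$ in the Jacobi-triple-product series and using $\binom{-n}{2}-n=\binom{n}{2}$ gives the same thing. For \eqref{jIdentityB} I would write $j(qx;q)=(qx;q)_\infty(1/x;q)_\infty(q;q)_\infty$ and peel one factor off each of the first two symbols, $(qx;q)_\infty=(x;q)_\infty/(1-x)$ and $(1/x;q)_\infty=(1-x^{-1})(q/x;q)_\infty$; the scalar prefactor $\tfrac{1-x^{-1}}{1-x}$ collapses to $-x^{-1}$, which is exactly the claim.

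For \eqref{jx2Identity} I would multiply $j(x;q)$ by $j(-x;q)$ in product form and pair terms: $(x;q)_\infty(-x;q)_\infty=(x^2;q^2)_\infty$ and $(q/x;q)_\infty(-q/x;q)_\infty=(q^2/x^2;q^2)_\infty$. What remains is $(q;q)_\infty^2=J_1^2$, whereas completing $j(x^2;q^2)$ needs an extra $(q^2;q^2)_\infty=J_2$; dividing out $J_1^2/J_2$ produces the stated factor $J_2/J_1^2$. For \eqref{jxToProduct} I would instead split by base: $(x;q)_\infty=(x;q^2)_\infty(qx;q^2)_\infty$ and $(q/x;q)_\infty=(q/x;q^2)_\infty(q^2/x;q^2)_\infty$, and recognize these four pieces as precisely the non-$(q^2;q^2)_\infty$ parts of $j(x;q^2)$ and $j(qx;q^2)$ (using $j(qx;q^2)=(qx;q^2)_\infty(q/x;q^2)_\infty(q^2;q^2)_\infty$). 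The product $j(x;q^2)j(qx;q^2)$ then equals $j(x;q)\,(q^2;q^2)_\infty^2/(q;q)_\infty=j(x;q)J_2^2/J_1$, which rearranges to the assertion.

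There is no genuine obstacle here — these are among the most standard theta-function manipulations — so the only thing to watch is the bookkeeping: after each pairing one must correctly identify which leftover copy of $(q;q)_\infty$ or $(q^2;q^2)_\infty$ is ``extra,'' i.e.\ accurately track the $J_m$ factors, and one should be explicit about the single index shift $\binom{-n}{2}-n=\binom{n}{2}$ when passing between the series and product forms.
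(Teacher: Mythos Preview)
Your proposal is correct and follows essentially the same approach as the paper: both work directly from the product form $j(x;q)=(x;q)_\infty(q/x;q)_\infty(q;q)_\infty$ and elementary $q$-Pochhammer splittings. The paper in fact only writes out the argument for \eqref{jxToProduct} explicitly (via the same base-doubling $(x;q)_\infty=(x;q^2)_\infty(qx;q^2)_\infty$ you use) and dismisses the remaining three as immediate from the definition, so your treatment is slightly more detailed but not different in substance.
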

These equations are very easy to prove directly from the definition itself. We prove \eqref{jxToProduct} which is very similar to \cite[(2.2e)]{HM}.  We write 
$$j(x;q) = (x;q)_{\infty}(q/x;q)_{\infty}(q;q)_{\infty}$$
and then use 
$$(x;q)_{\infty} = (x;q^2)_{\infty}(qx;q^2)_{\infty}$$
for the first two terms and obtain
$$j(x;q) = (x;q^2)_{\infty}(qx;q^2)_{\infty}(q/x;q^2)_{\infty}(q^2/x;q^2)_{\infty}(q;q)_{\infty}\frac{(q^2;q^2)_{\infty}^2}{(q^2;q^2)_{\infty}^2},$$
which can easily be seen to be equal to
$$j(x;q^2)j(qx;q^2)\frac{J_1}{J_2^2}.$$

We will also use the following identities later which we collect in the following proposition:
\begin{proposition} \cite[Section 2]{HM} We have
\label{bigJidentities}
\begin{align*}
J_{1,2} = \frac{J_1^2}{J_2},\ J_{1,4} =\frac{J_1J_4}{J_2},\
\overline{J}_{1,4} = \frac{J_2^2}{J_1}.
\end{align*}
\
\end{proposition}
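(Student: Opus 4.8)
The plan is to prove all three identities by the same elementary route used just above for \eqref{jxToProduct}: expand each theta constant via the product form $j(x;q)=(x)_\infty(q/x)_\infty(q)_\infty$ of the Jacobi triple product, and then reconcile the two sides using only the splitting identities
$$(x;q)_\infty=(x;q^2)_\infty(qx;q^2)_\infty, \qquad (x;q^2)_\infty(-x;q^2)_\infty=(x^2;q^4)_\infty.$$

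First I would handle $J_{1,2}=j(q;q^2)=(q;q^2)_\infty(q;q^2)_\infty(q^2;q^2)_\infty$, and observe that splitting $(q;q)_\infty=(q;q^2)_\infty(q^2;q^2)_\infty$ in the numerator of $J_1^2/J_2=(q;q)_\infty^2/(q^2;q^2)_\infty$ produces exactly $(q;q^2)_\infty^2(q^2;q^2)_\infty$. Next, $J_{1,4}=j(q;q^4)=(q;q^4)_\infty(q^3;q^4)_\infty(q^4;q^4)_\infty$; since $(q;q)_\infty/(q^2;q^2)_\infty=(q;q^2)_\infty=(q;q^4)_\infty(q^3;q^4)_\infty$, the right-hand side $J_1J_4/J_2$ collapses to the same product. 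Finally, $\overline{J}_{1,4}=j(-q;q^4)=(-q;q^4)_\infty(-q^3;q^4)_\infty(q^4;q^4)_\infty=(-q;q^2)_\infty(q^4;q^4)_\infty$, whereas $J_2^2/J_1=(q^2;q^2)_\infty/(q;q^2)_\infty$; rewriting $(q^2;q^2)_\infty=(q^2;q^4)_\infty(q^4;q^4)_\infty$ and then $(q^2;q^4)_\infty=(q;q^2)_\infty(-q;q^2)_\infty$ turns this into $(-q;q^2)_\infty(q^4;q^4)_\infty$ as well.

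There is essentially no genuine obstacle here; the only point requiring a little care is the bookkeeping of residue classes of exponents when splitting the products in the third identity, where the factor $(-q;q^2)_\infty$ must be introduced at exactly the right stage. Alternatively, all three could be deduced by specializing $x$ in \eqref{jx2Identity} and \eqref{jxToProduct}, but the direct product computation above is shortest; and since these are completely standard, one may also simply quote them from \cite[Section~2]{HM}.
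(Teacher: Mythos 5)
Your argument is correct: all three identities follow exactly as you compute, using $j(x;q)=(x)_\infty(q/x)_\infty(q)_\infty$ together with $(x;q)_\infty=(x;q^2)_\infty(qx;q^2)_\infty$ and $(q;q^2)_\infty(-q;q^2)_\infty=(q^2;q^4)_\infty$, and the bookkeeping in the third identity ($(-q;q^4)_\infty(-q^3;q^4)_\infty=(-q;q^2)_\infty$) checks out. The paper gives no proof of this proposition at all, merely citing \cite[Section 2]{HM}, so your direct product-splitting verification is in the same elementary spirit as the paper's own treatment of \eqref{jxToProduct} and fills that citation in correctly.
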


\section{Evaluation of Hecke-sums in terms of double-sums of type I and type II symmetry}
\label{calculations}
The basic technique to write Hecke-type sums in terms of double-sums with type I or type II symmetry is the same---we apply a suitable change of variables and then apply suitable $q$-series techniques to convert the sum into $f$'s and $g$'s. We demonstrate this technique for the double-sums in \eqref{th21} (or \eqref{th22}) which we believe is slightly more involved than others.

As an example, let us begin with
\begin{equation}
\label{th2expression}
\sum_{n=0}^\infty \sum_{|m|\leq n/2}(-1)^mq^{n^2-2m^2}.
\end{equation}
Notice that we start $n$ from $0$ instead of $1$.  We first split $n$ with respect to $n$ even or $n$ odd (written $2n$ or $2n+1$). This yields
\begin{equation}
\label{sum2}
\sum_{n=0}^\infty \sum_{|m|\leq n}(-1)^mq^{4n^2-2m^2}+ \sum_{n=0}^\infty \sum_{|m|\leq n}(-1)^mq^{4n^2+4n+1-2m^2}.
\end{equation}
We next apply the transformations
\begin{align*}
r &= n+m \geq 0,\\
s &= n-m \geq 0,\\
n &=\frac{r+s}{2}, &m = \frac{r-s}{2},
\end{align*}
and then we apply the condition on whether $(r,s) = (2R,2S)$ or $(2R+1,2S+1)$.
We then obtain the following sum of four series for our original series (where we have again re-written ($R$, $S$) as ($r$,$s$) for better typography):
\begin{align*}
\sum_{r,s\geq 0}(-1)^{r+s}q^{4\binom{r}{2}+4\binom{s}{2}+12rs} q^{2r} q^{2s}\\
&+ \sum_{r,s\geq 0}(-1)^{r+s}q^{4\binom{r}{2}+4\binom{s}{2}+12rs} q^{6r} q^{6s} q\\
+\sum_{r,s\geq 0}(-1)^{r+s}q^{4\binom{r}{2}+4\binom{s}{2}+12rs} q^{10r} q^{10s} q^{4}\\
&+ \sum_{r,s\geq 0}(-1)^{r+s}q^{4\binom{r}{2}+4\binom{s}{2}+12rs}  q^{14r} q^{14s} q^9.
\end{align*}

Let us denote the four series above as follows:
\begin{align*}
&A1 &+& &B1\\
&+C1 &+& &D1.
\end{align*}
The two series on the left ($A1$ and $C1$) correspond to the left term in \eqref{sum2} ($A1$ corresponds to $(r,s) = (2R,2S)$, and $C1$ to $(r,s) = (2R+1,2S+1)$). Similarly $B1$, $D1$ correspond to the right term in \eqref{sum2}.

We then evaluate the following two series (that we get by opening the brackets), similarly, and get eight terms (four for each series)

\begin{equation}
\label{section4temp1}
\sum_{n=0}^\infty \sum_{|m|\leq n/2}(-1)^mq^{n^2-2m^2}(1+q^{\alpha m +\beta n + k}).
\end{equation}
We obtain final expressions in which exponents are in terms of $\alpha$, $\beta$, and $k$. We then select these $\alpha$, $\beta$, and $k$ carefully ($(\alpha,\beta,k) = (0,2,1)$ respectively) so that the eight series complement one another so as to form $f$'s and $g$'s.

Note that \eqref{section4temp1} is the same as the series \eqref{th2expression} multiplied by $(1+q^{\alpha m +\beta n +k})$ so the first four terms are same as $A1$, $B1$, $C1$, $D1$ above. The other four terms that we get are the following where we have applied the transformation $r\leftarrow -r-1$ and $s\leftarrow -s-1$ and changed $\sum_{r,s \geq 0}$ to $\sum_{r,s <0}$.  The four terms are

\begin{align*}
\sum_{r,s < 0}(-1)^{r+s}q^{4\binom{r}{2}+4\binom{s}{2}+12rs} q^{14r} q^{14s} q^9\\
&+\sum_{r,s < 0}(-1)^{r+s}q^{4\binom{r}{2}+4\binom{s}{2}+12rs} q^{10r} q^{10s} q^4\\
+\sum_{r,s < 0}(-1)^{r+s}q^{4\binom{r}{2}+4\binom{s}{2}+12rs} q^{6r} q^{6s}q\\
&+\sum_{r,s < 0}(-1)^{r+s}q^{4\binom{r}{2}+4\binom{s}{2}+12rs}  q^{2r}q^{2s}.
\end{align*}
We denote these four terms respectively
\begin{align*}
&A2 &+& &B2\\
&+C2 &+& &D2.
\end{align*}

For more clarity we show how the terms $B1$ and $D1$ combine to form   $qg_{1,3,1}(q^6,q^6;q^4)$. After applying the transformations $r\leftarrow -r-1$ and $s\leftarrow -s-1$ to $D1$ we get the following:

$$D1 = \sum_{r,s<0}(-1)^{r+s}q^{4\binom{r}{2}+4\binom{s}{2}+12rs} \cdot q^{6r}\cdot q^{6s}\cdot q.$$
We can now see 
$$B1 + D1 = qg_{1,3,1}(q^6,q^6;q^4).$$
Similarly, we obtain 
\begin{align*}
A1 + D2 &= g_{1,3,1}(q^2,q^2;q^4),\\
A2 +C2 &= q^9g_{1,3,1}(q^{14},q^{14};q^4),\\
C1+B2 &= q^4g_{1,3,1}(q^{10},q^{10};q^4).
\end{align*}
Finally we obtain the following:
\begin{equation}
\begin{split}
\sum_{n=0}^\infty\sum_{|m|\leq n/2}(-1)^mq^{n^2-2m^2}(1+q^{2n+1}) = qg_{1,3,1}(q^6,q^6;q^4) + g_{1,3,1}(q^2,q^2;q^4)\\
+q^4g_{1,3,1}(q^{10},q^{10};q^4)+q^9g_{1,3,1}(q^{14},q^{14};q^4),
\end{split}
\end{equation}
and

\begin{equation}
\begin{split}
\sum_{n=0}^\infty\sum_{|m|\leq n/2}(-1)^mq^{n^2-2m^2}(1-q^{2n+1}) = qg_{1,3,1}(q^6,q^6;q^4) + f_{1,3,1}(q^2,q^2;q^4)\\+q^4f_{1,3,1}(q^{10},q^{10};q^4)-q^9g_{1,3,1}(q^{14},q^{14};q^4).
\end{split}
\end{equation}

We would finally like to mention that for the sums of the form
$$\sum_{n=0}^{\infty}\sum_{m=0}^{n},$$
which appears in \eqref{Liu443-negative}, we use the following change of variables
\begin{align*}
n=r+s,\ m=s.
\end{align*}

\section{Proofs}
\subsection{Proof of Theorem \ref{WYNewTh1}}

We apply the techniques of Section \ref{calculations} to the double summation
$$\sum_{n=1}^\infty \sum_{|m|\leq n}(-1)^mq^{n^2+m^2}$$
in the right-hand side of \eqref{WYth1} and get the following

\begin{proposition}We have
\begin{equation}
\label{th11}
\sum_{n=0}^\infty \sum_{|m|\leq n}(-1)^{m}q^{m^2+n^2}(1+q^{2n+1}) = g_{1,0,1}(q^2,q^2;q^4)+qg_{1,0,1}(q^4,q^4;q^4),
\end{equation}
and
\begin{equation}
\label{th12}
\begin{split}
\sum_{n=0}^\infty \sum_{|m|\leq n}(-1)^{m}q^{m^2+n^2}(1-q^{2n+1}) &= f_{1,0,1}(q^2,q^2;q^4)+qf_{1,0,1}(q^4,q^4;q^4)\\
&=j(q^2;q^4).
\end{split}
\end{equation}
\end{proposition}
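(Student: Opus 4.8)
The plan is to prove \eqref{th11} and \eqref{th12} together by the method of Section~\ref{calculations}, and then to read off the closed form $j(q^2;q^4)$ from the resulting type~I building blocks. Write $S:=\sum_{n\ge 0}\sum_{|m|\le n}(-1)^m q^{m^2+n^2}$ and $T:=\sum_{n\ge 0}\sum_{|m|\le n}(-1)^m q^{m^2+n^2+2n+1}$, so the left sides of \eqref{th11} and \eqref{th12} are $S+T$ and $S-T$. First I would apply the change of variables $r=n+m\ge 0$, $s=n-m\ge 0$ (valid since $|m|\le n$), under which $m^2+n^2=(r^2+s^2)/2$, $2n+1=r+s+1$, and $r\equiv s \pmod 2$; then split according to $(r,s)=(2R,2S)$ or $(2R+1,2S+1)$ and use $2X^2=4\binom{X}{2}+2X$. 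This rewrites $S$ as the sum of an ``even'' piece $\sum_{R,S\ge 0}(-1)^{R+S}q^{4\binom{R}{2}+4\binom{S}{2}+2R+2S}$ and an ``odd'' piece $q\sum_{R,S\ge 0}(-1)^{R+S}q^{4\binom{R}{2}+4\binom{S}{2}+4R+4S}$, and likewise $T$ as an ``even'' piece $q\sum_{R,S\ge 0}(-1)^{R+S}q^{4\binom{R}{2}+4\binom{S}{2}+4R+4S}$ and an ``odd'' piece $q^{4}\sum_{R,S\ge 0}(-1)^{R+S}q^{4\binom{R}{2}+4\binom{S}{2}+6R+6S}$.

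Next I would reindex the two pieces of $T$ by $R\mapsto -r-1$, $S\mapsto -s-1$, which turns $\sum_{R,S\ge 0}$ into $\sum_{r,s<0}$ without changing the value. Using $\binom{-r-1}{2}=\binom{r}{2}+2r+1$ one checks that a summand with linear exponent $\mu(r+s)$ and constant $\kappa$ becomes one with linear exponent $(8-\mu)(r+s)$ and constant $\kappa+2(4-\mu)$, while $(-1)^{R+S}=(-1)^{r+s}$. Hence the $T$-odd piece ($\mu=6$, $\kappa=4$) becomes $\sum_{r,s<0}(-1)^{r+s}q^{4\binom{r}{2}+4\binom{s}{2}+2r+2s}$, the $\sum_{r,s<0}$-half of the $(q^2,q^2;q^4)$ block, and the $T$-even piece ($\mu=4$, a fixed point of $\mu\mapsto 8-\mu$) becomes $q\sum_{r,s<0}(-1)^{r+s}q^{4\binom{r}{2}+4\binom{s}{2}+4r+4s}$, the $\sum_{r,s<0}$-half of $q$ times the $(q^4,q^4;q^4)$ block. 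Thus the $S$-even piece and the reindexed $T$-odd piece combine to $\big(\sum_{r,s\ge 0}\pm\sum_{r,s<0}\big)(-1)^{r+s}q^{4\binom{r}{2}+4\binom{s}{2}+2r+2s}$, and the $S$-odd piece and the reindexed $T$-even piece combine to $q\big(\sum_{r,s\ge 0}\pm\sum_{r,s<0}\big)(-1)^{r+s}q^{4\binom{r}{2}+4\binom{s}{2}+4r+4s}$. Taking the $+$ sign (the sum $S+T$) gives $g_{1,0,1}(q^2,q^2;q^4)+q\,g_{1,0,1}(q^4,q^4;q^4)$, which is \eqref{th11}, and taking the $-$ sign (the sum $S-T$) gives $f_{1,0,1}(q^2,q^2;q^4)+q\,f_{1,0,1}(q^4,q^4;q^4)$, the first line of \eqref{th12}. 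Every series above converges absolutely because the form $(r^2+s^2)/2$ is positive definite.

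Finally, for the last equality in \eqref{th12} I would use that, since $b=0$ and the exponent carries no $rs$ term, the type~I sum factors: $f_{1,0,1}(x,y;Q)=\pi^+(x)\pi^+(y)-\pi^-(x)\pi^-(y)$ with $\pi^+(x):=\sum_{r\ge 0}(-1)^r x^r Q^{\binom{r}{2}}$, $\pi^-(x):=\sum_{r<0}(-1)^r x^r Q^{\binom{r}{2}}$, so $\pi^+(x)+\pi^-(x)=j(x;Q)$ by the Jacobi triple product. Take $Q=q^4$. Since $j(q^4;q^4)=0$ we get $\pi^-(q^4)=-\pi^+(q^4)$, hence $f_{1,0,1}(q^4,q^4;q^4)=\pi^+(q^4)^2-\pi^-(q^4)^2=0$; and $\pi^+(q^2)=\sum_{r\ge 0}(-1)^r q^{2r^2}$, $\pi^-(q^2)=\sum_{r<0}(-1)^r q^{2r^2}$, so the substitution $r\mapsto -r$ gives $\pi^-(q^2)=\pi^+(q^2)-1$ and therefore $f_{1,0,1}(q^2,q^2;q^4)=\big(\pi^+(q^2)-\pi^-(q^2)\big)\big(\pi^+(q^2)+\pi^-(q^2)\big)=j(q^2;q^4)$. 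Adding the two, $f_{1,0,1}(q^2,q^2;q^4)+q\,f_{1,0,1}(q^4,q^4;q^4)=j(q^2;q^4)$. (Alternatively this drops out of the original sum directly, since $q^{n^2}(1-q^{2n+1})=q^{n^2}-q^{(n+1)^2}$ telescopes $S-T$ to $1+2\sum_{n\ge 1}(-1)^n q^{2n^2}=j(q^2;q^4)$.) The step needing care is the bookkeeping in the first two paragraphs---especially the cross-pairing: after the reflection it is the $T$-\emph{odd} piece (whose linear exponent $6r$ drops to $2r$), not the $T$-\emph{even} piece, that completes the $(q^2,q^2;q^4)$ block. Everything else is routine.
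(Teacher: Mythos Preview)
Your derivation of the $f$- and $g$-decompositions is exactly the method of Section~\ref{calculations}: the substitution $r=n+m$, $s=n-m$, the parity split, and the reflection $R\mapsto -r-1$, $S\mapsto -s-1$ to pair each $\sum_{r,s\ge 0}$ with a matching $\sum_{r,s<0}$ are precisely what the paper does, and your bookkeeping checks out (including the cross-pairing you flag).

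Where you diverge from the paper is in the evaluation step of \eqref{th12}. The paper obtains $f_{1,0,1}(q^4,q^4;q^4)=0$ and $f_{1,0,1}(q^2,q^2;q^4)=j(q^2;q^4)$ from the functional equations for $f_{a,b,c}$: Lemma~\ref{lemmaf0} applies Proposition~\ref{f1} to get $f_{1,0,1}(q,q;q)=-f_{1,0,1}(q,q;q)$, and Lemma~\ref{lemmaftoj} combines Propositions~\ref{f1} and~\ref{f2} with $l=k=1$ to produce $2f_{1,0,1}(q,q;q^2)=2j(q;q^2)$. You instead exploit the special feature $b=0$ to factor $f_{1,0,1}(x,y;Q)=\pi^{+}(x)\pi^{+}(y)-\pi^{-}(x)\pi^{-}(y)$ into one-variable partial theta sums, and then read off both vanishings from $\pi^{+}+\pi^{-}=j$ together with the elementary symmetry $\pi^{-}(q^2)=\pi^{+}(q^2)-1$. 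Your route is shorter and entirely self-contained for this particular pair of double-sums; the paper's route uses general identities that apply uniformly to $f_{a,b,c}$ regardless of whether $b=0$, and so fits their broader programme. Your parenthetical telescoping remark is also a legitimate third proof of the closed form.
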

The second equality in \eqref{th12} follows from Lemma \ref{lemmaf0} and Lemma \ref{lemmaftoj}. Next we note that adding \eqref{th11} and \eqref{th12} produces the first term in the right-hand side of \eqref{WYth1} proving Theorem \ref{WYNewTh1}.  For the last equality we use the identity
\begin{equation}
    j(q^2;q^4)=1+\sum_{n=1}^{\infty}(-1)^{q^{2n^2}}.\label{equation:last-id}
\end{equation}

\subsection{Proof of Theorem \ref{WYNewTh2}}
Again we apply the techniques of Section \ref{calculations} to the double summation in the right-hand side of \eqref{WYth2} and we get the following
\begin{proposition}We have
\begin{equation}
\label{th21}
\begin{split}
\sum_{n=0}^\infty\sum_{|m|\leq n/2}(-1)^mq^{n^2-2m^2}(1+q^{2n+1}) = qg_{1,3,1}(q^6,q^6;q^4) + g_{1,3,1}(q^2,q^2;q^4)\\+q^4g_{1,3,1}(q^{10},q^{10};q^4)+q^9g_{1,3,1}(q^{14},q^{14};q^4),
\end{split}
\end{equation}
and
\begin{equation}
\label{th22}
\begin{split}
\sum_{n=0}^\infty\sum_{|m|\leq n/2}(-1)^mq^{n^2-2m^2}(1-q^{2n+1}) &= qg_{1,3,1}(q^6,q^6;q^4) + f_{1,3,1}(q^2,q^2;q^4)\\&+q^4f_{1,3,1}(q^{10},q^{10};q^4)-q^9g_{1,3,1}(q^{14},q^{14};q^4)\\
&=f_{1,3,1}(q^2,q^2;q^4) =j(q^2;q^4).
\end{split}
\end{equation}
\end{proposition}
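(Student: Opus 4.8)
The plan is to reduce the Proposition to the computation already carried out in Section \ref{calculations}, supplemented by a handful of applications of the functional equations from the Preliminaries. Equation \eqref{th21} and the first equality in \eqref{th22} are literally the two identities displayed at the close of Section \ref{calculations}: one splits the outer index $n$ by parity, passes to $r=n+m$, $s=n-m$, splits again according to the parities of $r$ and $s$, reflects the ``$q^{2n+1}\cdot$'' part via $r\leftarrow-r-1$, $s\leftarrow-s-1$, and reassembles the eight resulting series into the indicated sums of $f_{1,3,1}$'s and $g_{1,3,1}$'s. In the $1-q^{2n+1}$ case the four series coming from the ``$1\cdot$'' part keep their sign while the four coming from the ``$q^{2n+1}\cdot$'' part change sign, so two of the reassembled blocks become $f_{1,3,1}$'s (the $q^{2}$- and $q^{10}$-blocks, each pairing a $\sum_{r,s\ge0}$ with a reflected $\sum_{r,s<0}$ of opposite sign) and two remain $g_{1,3,1}$'s (the $q^{6}$- and $q^{14}$-blocks, each pairing two same-sign series), which is exactly the claimed first equality of \eqref{th22}.

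Granting that, it remains to prove the last two equalities in \eqref{th22}. For the second equality I would show that the three ``extra'' terms cancel, i.e.
\begin{equation*}
qg_{1,3,1}(q^6,q^6;q^4)+q^4f_{1,3,1}(q^{10},q^{10};q^4)-q^9g_{1,3,1}(q^{14},q^{14};q^4)=0.
\end{equation*}
Applying Proposition \ref{f1} with $(a,b,c)=(1,3,1)$, base $q^4$, and $x=y=q^{10}$ gives (since $2a+b=5$ so $q^{4(2a+b)}=q^{20}=xy$) the relation $f_{1,3,1}(q^{10},q^{10};q^4)=-f_{1,3,1}(q^{10},q^{10};q^4)$, so that term vanishes; applying Proposition \ref{g1} with the same parameters and $x=y=q^6$ gives $g_{1,3,1}(q^6,q^6;q^4)=q^{8}g_{1,3,1}(q^{14},q^{14};q^4)$, hence $qg_{1,3,1}(q^6,q^6;q^4)=q^{9}g_{1,3,1}(q^{14},q^{14};q^4)$ and the remaining two terms cancel.

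For the third equality, $f_{1,3,1}(q^2,q^2;q^4)=j(q^2;q^4)$, I would apply Proposition \ref{f2} with $(a,b,c)=(1,3,1)$, base $q^4$, $l=1$, $k=0$, and $x=y=q^2$, which yields $f_{1,3,1}(q^2,q^2;q^4)=-q^2f_{1,3,1}(q^6,q^{14};q^4)+j(q^2;q^4)$. Then $f_{1,3,1}(q^6,q^{14};q^4)=0$: Proposition \ref{f1} (again $xy=q^{20}$) turns it into $-f_{1,3,1}(q^{14},q^6;q^4)$, and $f_{1,3,1}$ is symmetric in its two arguments because its outer parameters agree ($a=c=1$), so it equals minus itself.

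I expect no genuine obstacle. The one place that demands care is the bookkeeping in Section \ref{calculations}: tracking which of the eight series carries which monomial prefactor and which sign, and confirming that the $1-q^{2n+1}$ sign pattern produces precisely two $f_{1,3,1}$'s and two $g_{1,3,1}$'s (rather than, say, four $f$'s). After that, the reductions are one-line consequences of Propositions \ref{f1}, \ref{f2}, and \ref{g1}; the only subtlety is keeping the exponents straight, in particular noting that $q^{4(2a+b)}=q^{20}$ coincides with $xy$ in each of the relevant $f_{1,3,1}$'s, which is what forces them to be self-negating.
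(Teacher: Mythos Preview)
Your proposal is correct. For \eqref{th21}, the first equality in \eqref{th22}, and the cancellation
\[
qg_{1,3,1}(q^6,q^6;q^4)+q^4f_{1,3,1}(q^{10},q^{10};q^4)-q^9g_{1,3,1}(q^{14},q^{14};q^4)=0,
\]
you follow the paper exactly: the first two are the displayed identities at the end of Section~\ref{calculations}, and the cancellation is the content of Lemmas~\ref{lemmaf0_2} and~\ref{lemmag0} (one-line consequences of Propositions~\ref{f1} and~\ref{g1}). One minor slip in your narrative: the $q^6$-block in Section~\ref{calculations} actually pairs $B1$ with a \emph{reflected} $D1$, both coming from the ``$1\cdot$'' part, not one series from each part as you describe---but this does not affect the outcome, and you explicitly defer to Section~\ref{calculations} anyway.

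Where you genuinely depart from the paper is in the final equality $f_{1,3,1}(q^2,q^2;q^4)=j(q^2;q^4)$. The paper devotes all of Section~\ref{f131Thsection} to this: it splits $f_{1,3,1}(\pm q^{1/2},\pm q^{1/2};-q)$ by parity, applies the Appell--Lerch decomposition of Theorem~\ref{f131decomposition} with odd $l$ so that the theta-quotient tails cancel, evaluates $j(\pm q^{1/2};-q)$ via \eqref{jinto2sums}, and finishes with the $m$-function relations \eqref{mrelation1}--\eqref{mrelation2}. Your route is far shorter: a single shift via Proposition~\ref{f2} with $(l,k)=(1,0)$ reduces the claim to $f_{1,3,1}(q^6,q^{14};q^4)=0$, and then Proposition~\ref{f1} together with the (unstated but immediate) symmetry $f_{a,b,a}(x,y;q)=f_{a,b,a}(y,x;q)$ forces that term to be self-negating. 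Your argument is more elementary and avoids Appell--Lerch machinery entirely; the paper's approach, while heavier here, illustrates the general decomposition method that it needs elsewhere (e.g.\ Section~\ref{f131q3f131section}).
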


Where we have used $f_{1,3,1}(q^{10},q^{10};q^4) =0$ (Lemma \ref{lemmaf0_2}), and $$qg_{1,3,1}(q^6,q^6;q^4)-q^9g_{1,3,1}(q^{14},q^{14};q^4) =0$$ (Lemma \ref{lemmag0}). The double-sum $f_{1,3,1}(q^2,q^2;q^4)$ is evaluated in Section \ref{f131Thsection} (Theorem \ref{f131Th}). Adding \eqref{th21} and \eqref{th22} produces Theorem \ref{WYNewTh2}.  For the last equality we use the identity (\ref{equation:last-id}).

\subsection{Proof of Theorem \ref{LiuProp1_11NewTheorem}}
Corresponding to the three equations in Proposition \ref{LiuProp1_11} we get the following after applying technique of Section \ref{calculations}

\begin{equation}
\label{Liu1_11aIntermediate}
\sum_{n=0}^\infty\sum_{|m|\leq n}(-1)^{n+m}q^{n^2+n-m^2} = f_{0,1,0}(-q,-q;q^4),
\end{equation}

\begin{equation}
\label{Liu1_11bIntermediate}
\sum_{n=0}^{\infty}\sum_{|m|\leq n} (-1)^{m}(1-q^{2n+1})q^{2n^2+n-m^2} = f_{1,3,1}(q^2,q^2;q^2) + q^3f_{1,3,1}(q^6,q^6;q^2),
\end{equation}
and
\begin{equation}
\label{Liu1_11cIntermediate}
\sum_{n=0}^{\infty}\sum_{|m|\leq n} (-1)^{m}(1-q^{2n+1})q^{(3n^2+n)/2-m^2} = f_{1,5,1}(q,q;q)+q^2f_{1,5,1}(q^4,q^4;q).
\end{equation}

We will prove in Section \ref{f131q3f131section} that 
$$f_{1,3,1}(q^2,q^2;q^2) + q^3f_{1,3,1}(q^6,q^6;q^2) = J_1J_2.$$ Combined with this fact the above three equations readily imply Theorem \ref{LiuProp1_11NewTheorem}.
\section{Reduction and cancellation of double-sums in the final expressions}
\label{reductions}
\begin{lemma}We have
\label{lemmaf0}
$$f_{1,0,1}(q,q;q) =0.$$
\end{lemma}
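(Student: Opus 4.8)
The plan is to recognize that $f_{1,0,1}(q,q;q)$ is \emph{anti-invariant} under the functional equation of Proposition~\ref{f1}, and therefore must vanish. Concretely, I would apply Proposition~\ref{f1} with $a=c=1$, $b=0$, and $x=y=q$. Then $2a+b = 2c+b = 2$, so the new arguments are $q^{2a+b}/x = q^2/q = q$ and $q^{2c+b}/y = q^2/q = q$, while the prefactor is $-q^{a+b+c}/(xy) = -q^{2}/q^{2} = -1$. Hence Proposition~\ref{f1} gives $f_{1,0,1}(q,q;q) = -f_{1,0,1}(q,q;q)$, which forces $f_{1,0,1}(q,q;q) = 0$.

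Alternatively, and essentially equivalently, one can argue straight from the defining series \eqref{fabc}. Since $q^{r}q^{\binom{r}{2}} = q^{\binom{r+1}{2}}$, the summand simplifies to $(-1)^{r+s}q^{\binom{r+1}{2}+\binom{s+1}{2}}$, so
$$f_{1,0,1}(q,q;q) = \sum_{r,s\ge 0}(-1)^{r+s}q^{\binom{r+1}{2}+\binom{s+1}{2}} - \sum_{r,s<0}(-1)^{r+s}q^{\binom{r+1}{2}+\binom{s+1}{2}}.$$
In the second sum I would substitute $r\mapsto -r-1$ and $s\mapsto -s-1$, so that $r,s$ range over the nonnegative integers; since $\binom{-r}{2} = \tfrac{r(r+1)}{2} = \binom{r+1}{2}$ and $(-1)^{(-r-1)+(-s-1)} = (-1)^{r+s}$, the second sum becomes term-by-term identical to the first, and the difference is $0$.

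There is essentially no obstacle here: the identity is an immediate specialization of Proposition~\ref{f1}. The only point requiring a little care, in the direct approach, is the bookkeeping of the exponents under the reflection $r\mapsto -r-1$, but that is precisely the computation underlying Proposition~\ref{f1} itself, so invoking that proposition is the cleanest route.
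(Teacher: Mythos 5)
Your proposal is correct and follows the paper's own route exactly: the paper also applies Proposition~\ref{f1} with $a=c=1$, $b=0$, $x=y=q$ to get $f_{1,0,1}(q,q;q)=-f_{1,0,1}(q,q;q)$ and conclude vanishing. Your supplementary direct verification via $r\mapsto -r-1$, $s\mapsto -s-1$ is a correct unpacking of the same symmetry, not a different method.
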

\begin{proof}
This is a straight forward application of Proposition \ref{f1} from which we get
$$f_{1,0,1}(q,q;q)= - f_{1,0,1}(q,q;q),$$
implying $f_{1,0,1}(q,q;q) =0$.
\end{proof}

\begin{lemma}We have
\label{lemmaftoj}
$$f_{1,0,1}(q,q;q^2) = j(q;q^2).$$
\end{lemma}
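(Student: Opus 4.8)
The plan is to unwind the definition \eqref{fabc} and to exploit the fact that, because the cross term is absent here ($b=0$), the summand factors as a product of a function of $r$ and a function of $s$. First I would simplify the exponent: in
$$f_{1,0,1}(q,q;q^2) = \left(\sum_{r,s\geq 0} - \sum_{r,s<0}\right)(-1)^{r+s}q^{r+s}q^{2\binom{r}{2}+2\binom{s}{2}}$$
one has $q^{r}q^{2\binom{r}{2}} = q^{r^2}$ and likewise for $s$, so that
$$f_{1,0,1}(q,q;q^2) = \left(\sum_{r,s\geq 0} - \sum_{r,s<0}\right)(-1)^{r+s}q^{r^2+s^2}.$$

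Next I would factor each of the two double-sums over its rectangular index region. The first equals $\left(\sum_{r\geq 0}(-1)^r q^{r^2}\right)^{2}$; for the second, the substitution $r\mapsto -r$, $s\mapsto -s$ turns $\sum_{r,s<0}$ into $\sum_{r,s\geq 1}$, giving $\left(\sum_{r\geq 1}(-1)^r q^{r^2}\right)^{2}$. Writing $A := \sum_{r\geq 0}(-1)^r q^{r^2}$ and $B := \sum_{r\geq 1}(-1)^r q^{r^2}$, we then have $f_{1,0,1}(q,q;q^2) = A^{2} - B^{2} = (A-B)(A+B)$. Since $A - B = 1$ (only the $r=0$ term survives the cancellation), this collapses to $A + B = \sum_{r\in\mathbb{Z}}(-1)^r q^{r^2}$.

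Finally I would identify this bilateral sum with the desired theta function via the series form of $j$ given in the introduction: $j(q;q^2) = \sum_{n\in\mathbb{Z}}(-1)^n q^{2\binom{n}{2}}q^n = \sum_{n\in\mathbb{Z}}(-1)^n q^{n^2}$, which is exactly the expression just obtained. I do not expect a genuine obstacle here, since all the rearrangements are legitimate for $|q|<1$ by absolute convergence. The only point requiring a little care is the bookkeeping of the index ranges when splitting and re-indexing the two halves of the Hecke-type sum—in particular checking that the ``$r,s<0$'' block reindexes to the strictly positive quadrant, so that the difference of squares genuinely telescopes down to the single constant term $1$.
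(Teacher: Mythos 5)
Your proof is correct, and it takes a genuinely different route from the paper. You exploit the special feature $b=0$: the summand $(-1)^{r+s}q^{r^2+s^2}$ separates, so each quadrant sum factors, and the difference of the two quadrants becomes a difference of squares $A^2-B^2=(A-B)(A+B)$ with $A-B=1$, collapsing to the bilateral sum $\sum_{n\in\mathbb{Z}}(-1)^nq^{n^2}=j(q;q^2)$; all rearrangements are justified by absolute convergence for $|q|<1$, and your reindexing of the $r,s<0$ block to $r,s\geq 1$ is correct since $(-1)^{-r-s}=(-1)^{r+s}$. The paper instead stays inside the general machinery: it applies Proposition \ref{f2} with $l=k=1$ to write $f_{1,0,1}(q,q;q^2)=q^2f_{1,0,1}(q^3,q^3;q^2)+2j(q;q^2)$, then uses the symmetry of Proposition \ref{f1} to identify $q^2f_{1,0,1}(q^3,q^3;q^2)=-f_{1,0,1}(q,q;q^2)$, and solves. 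Your argument is more elementary and self-contained, but it relies essentially on the absence of the cross term $brs$ and so does not generalize; the paper's functional-equation argument is the template that also handles the companion statements (e.g.\ Lemmas \ref{lemmaf0}, \ref{lemmaf0_2}, \ref{lemmag0}) where no such factorization is available.
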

\begin{proof}
We use Proposition \ref{f1} and \ref{f2} both.
We first apply Proposition \ref{f2} with $l=k=1$ to get
$$f_{1,0,1}(q,q;q^2) = q^2f_{1,0,1}(q^3,q^3;q^2)+2j(q;q^2).$$
Then we apply Proposition \ref{f1} to get $q^2f_{1,0,1}(q^3,q^3;q^2)=-f_{1,0,1}(q,q;q^2)$, from which we get

\begin{equation*}
f_{1,0,1}(q,q;q^2) = -f_{1,0,1}(q,q;q^2)+2j(q;q^2).
\end{equation*}
Adding $f_{1,0,1}(q,q;q^2)$ to both sides gives
$$f_{1,0,1}(q,q;q^2) = j(q;q^2).$$
\end{proof}

\begin{lemma}We have
\label{lemmaf0_2}
$$f_{1,3,1}(q^{5},q^{5};q^2) = 0.$$
\end{lemma}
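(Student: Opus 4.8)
The plan is to mimic the proof of Lemma \ref{lemmaf0}: show that Proposition \ref{f1} forces $f_{1,3,1}(q^5,q^5;q^2)$ to equal its own negative, hence to vanish. Applying Proposition \ref{f1} with $(a,b,c)=(1,3,1)$, $q\mapsto q^2$, and $x=y=q^5$ gives
\begin{equation*}
f_{1,3,1}(q^5,q^5;q^2) = -\frac{q^{2(1+3+1)}}{q^{5}\cdot q^{5}}f_{1,3,1}\big(q^{2(2+3)}/q^5,\,q^{2(2+3)}/q^5;q^2\big) = -\frac{q^{10}}{q^{10}}f_{1,3,1}(q^{5},q^{5};q^2),
\end{equation*}
since $2a+b = 2c+b = 5$ here and $q^{2(2+3)}/q^5 = q^{10}/q^5 = q^5$. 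Thus $f_{1,3,1}(q^5,q^5;q^2) = -f_{1,3,1}(q^5,q^5;q^2)$, and adding $f_{1,3,1}(q^5,q^5;q^2)$ to both sides yields the claim.

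There is essentially no obstacle: the only thing to check is the bookkeeping of exponents, namely that the functional-equation arguments $q^{(2a+b)m}/x$ come back to $q^5$ and that the prefactor $q^{m(a+b+c)}/(xy)$ is exactly $1$, both of which hold because $x = y = q^{2a+b} \cdot q^{-(a+b+c)/1}$... more simply, because $x=y=q^{5}$ is the fixed point of the involution $x\mapsto q^{5}/x$ and the scalar is $q^{10-10}=1$. One should also note the parenthetical remark in the paper that $f_{1,3,1}(q^{10},q^{10};q^4)=0$ (used in the proof of \eqref{th22}) is the same statement after $q\mapsto q^2$, so this single lemma covers both uses. If one wanted an independent sanity check, one could instead split $f_{1,3,1}(q^5,q^5;q^2)$ directly from \eqref{fabc} and pair the $(r,s)$ term with $(-r-1,-s-1)$: the exponent $2\binom{r}{2}+2\binom{s}{2}+3rs$ together with the factors $q^{5r}q^{5s}$ is symmetric under this substitution while the sign $(-1)^{r+s}$ is preserved and the $+/-$ split flips, giving term-by-term cancellation; but the one-line argument via Proposition \ref{f1} is cleaner and is the approach I would record.
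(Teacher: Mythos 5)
Your proposal is correct and is essentially the paper's own proof: a single application of Proposition \ref{f1} with $(a,b,c)=(1,3,1)$, base $q^2$, and $x=y=q^5$, giving $f_{1,3,1}(q^5,q^5;q^2)=-f_{1,3,1}(q^5,q^5;q^2)$ and hence the vanishing. Your added remark that $f_{1,3,1}(q^{10},q^{10};q^4)=0$ is the same identity under $q\mapsto q^2$ is also exactly how the lemma is invoked in the paper.
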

\begin{proof}
Again this is a straightforward application of Proposition \ref{f1} from which we get
$$f_{1,3,1}(q^{5},q^{5};q^2) = -f_{1,3,1}(q^{5},q^{5};q^2),$$
which gives $f_{1,3,1}(q^{5},q^{5};q^2) =0$.
\end{proof}

\begin{lemma}We have
\label{lemmag0}
$$g_{1,3,1}(q^3,q^3;q^2) - q^4g_{1,3,1}(q^{7},q^{7};q^2) =0.$$
\end{lemma}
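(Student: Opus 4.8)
The plan is to obtain this identity as a direct consequence of the functional equation for type~II double-sums, namely Proposition~\ref{g1}, specialized to the right parameters. The key observation is that Proposition~\ref{g1} should be invoked with the base equal to $q^2$ (not $q$) and with $(a,b,c)=(1,3,1)$. Writing that proposition with $q$ replaced by $q^2$ gives
$$g_{1,3,1}(x,y;q^2) = \frac{q^{10}}{xy}\,g_{1,3,1}\!\left(\frac{q^{10}}{x},\frac{q^{10}}{y};q^2\right),$$
since here $a+b+c=5$ forces $q^{2(a+b+c)}=q^{10}$, and $2a+b=2c+b=5$ forces $(q^2)^{2a+b}=(q^2)^{2c+b}=q^{10}$.

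Next I would substitute $x=y=q^3$ into this identity. The reflected arguments become $q^{10}/q^3 = q^7$ in both slots, and the prefactor becomes $q^{10}/(q^3q^3) = q^4$, so
$$g_{1,3,1}(q^3,q^3;q^2) = q^4\, g_{1,3,1}(q^7,q^7;q^2),$$
which is exactly the assertion after transposing the second term. There is essentially no obstacle beyond bookkeeping: the only thing one must spot is that the pairs $(q^3,q^3)$ and $(q^7,q^7)$ are interchanged by the involution $(x,y)\mapsto(q^{2a+b}/x,\,q^{2c+b}/y)$ at base $q^2$; once the parameters are matched up, the statement follows from a single application of Proposition~\ref{g1} with no $q$-series manipulation required. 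This is the same pattern used for Lemmas~\ref{lemmaf0}, \ref{lemmaftoj}, and~\ref{lemmaf0_2}, but now using the type~II functional equation rather than the type~I one.
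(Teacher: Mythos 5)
Your proposal is correct and is exactly the paper's argument: a single application of Proposition \ref{g1} with $(a,b,c)=(1,3,1)$ and base $q^2$, specialized at $x=y=q^3$, giving $g_{1,3,1}(q^3,q^3;q^2)=q^4g_{1,3,1}(q^7,q^7;q^2)$. The parameter bookkeeping ($q^{2(a+b+c)}=q^{10}$, reflected arguments $q^{10}/q^3=q^7$, prefactor $q^4$) matches the paper's proof, which states the same conclusion more tersely.
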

\begin{proof}
This is an easy application of Proposition \ref{g1}. Proposition \ref{g1} gives
$$g_{1,3,1}(q^3,q^3;q^2) = q^4g_{1,3,1}(q^{7},q^{7};q^2).$$
\end{proof}

\begin{lemma}We have
\label{lemmaf010}
$$\frac{(q;q^2)_{\infty}}{(q^2;q^2)_{\infty}}f_{0,1,0}(-q,-q;q^4)
=\frac{J_2^2}{J_1}.$$
\end{lemma}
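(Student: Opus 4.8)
The plan is to reduce everything to a manipulation of infinite products using the Kronecker-type identity already recorded as Proposition \ref{KroneckerIdentity}. By the remark following that proposition, the left-hand side of Proposition \ref{KroneckerIdentity} is exactly $f_{0,1,0}(-x,-y;q)$, so specializing $q\mapsto q^4$ and $x=y=q$ gives
\begin{equation*}
f_{0,1,0}(-q,-q;q^4)=\frac{(q^4;q^4)_\infty^2\,(q^2;q^4)_\infty^2}{(q;q^4)_\infty^2\,(q^3;q^4)_\infty^2}.
\end{equation*}
So the first step is simply to quote Proposition \ref{KroneckerIdentity} with this substitution; the only point requiring care is correctly expanding the denominator $(x,q/x,y,q/y;q)_\infty$ with $q$ replaced by $q^4$, which produces the four factors $(q;q^4)_\infty$, $(q^3;q^4)_\infty$, $(q;q^4)_\infty$, $(q^3;q^4)_\infty$.

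Next I would rewrite each $q$-Pochhammer symbol appearing in the resulting expression in terms of $J_1=(q;q)_\infty$, $J_2=(q^2;q^2)_\infty$, and $J_4=(q^4;q^4)_\infty$, using only the elementary factorizations
\begin{equation*}
(q;q)_\infty=(q;q^2)_\infty(q^2;q^2)_\infty,\qquad (q^2;q^2)_\infty=(q^2;q^4)_\infty(q^4;q^4)_\infty,\qquad (q;q^2)_\infty=(q;q^4)_\infty(q^3;q^4)_\infty.
\end{equation*}
These give $(q;q^2)_\infty=J_1/J_2$, $(q^2;q^4)_\infty=J_2/J_4$, and $(q;q^4)_\infty(q^3;q^4)_\infty=(q;q^2)_\infty=J_1/J_2$, so in particular the denominator above equals $(q;q^2)_\infty^2=J_1^2/J_2^2$.

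Assembling the pieces, the left-hand side of the lemma becomes
\begin{equation*}
\frac{(q;q^2)_\infty}{(q^2;q^2)_\infty}\cdot\frac{J_4^2\,(J_2/J_4)^2}{(J_1/J_2)^2}
=\frac{J_1/J_2}{J_2}\cdot\frac{J_2^2}{(J_1/J_2)^2}
=\frac{J_2^2}{J_1},
\end{equation*}
which is the claim. I expect no genuine obstacle here: the entire argument is bookkeeping with infinite products, and the one place to be attentive is matching exponents when substituting $q\mapsto q^4$ into Proposition \ref{KroneckerIdentity}. (As a sanity check one may also note that $J_2^2/J_1=\overline{J}_{1,4}$ by Proposition \ref{bigJidentities}, which is consistent with recognizing the final product as a single theta quotient.)
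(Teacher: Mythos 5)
Your proof is correct and follows essentially the same route as the paper: apply Proposition \ref{KroneckerIdentity} with $q\mapsto q^4$, $x=y=q$ to evaluate $f_{0,1,0}(-q,-q;q^4)$ as an infinite product, and then finish by the elementary factorizations $(q;q^2)_\infty=(q;q^4)_\infty(q^3;q^4)_\infty$ and $(q^2;q^2)_\infty=(q^2;q^4)_\infty(q^4;q^4)_\infty$. The bookkeeping in your display checks out, so nothing further is needed.
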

\begin{proof}
Applying Proposition \ref{KroneckerIdentity} to $f_{0,1,0}(-q,-q;q^4)$ yields
$$f_{0,1,0}(-q,-q;q^4) = \frac{(q^4;q^4)_{\infty}^2(q^2;q^4)_{\infty}^2}{(q;q^4)_{\infty}^2(q^3;q^4)_{\infty}^2},$$
and we can write $$(q;q^2)_\infty = (q;q^4)_\infty(q^3;q^4)_\infty$$ and $$(q^2;q^2)_\infty = (q^2;q^4)_\infty(q^4;q^4)_\infty.$$ Simple calculations then yield the following

\begin{align*}&\frac{(q;q^2)_{\infty}}{(q^2;q^2)_{\infty}}f_{0,1,0}(-q,-q;q^4)
\\&=\frac{(q^4;q^4)_\infty(q^2;q^4)_\infty}{(q;q^4)_\infty(q^3;q^4)_\infty} =\frac{(q^4;q^4)_\infty^2(q^2;q^4)_\infty^2}{(q;q^4)_\infty(q^3;q^4)_\infty(q^4;q^4)_\infty(q^2;q^4)_\infty}\\
&=\frac{\prod_{i=1}^\infty (1-q^{2i})^2}{\prod_{i=1}^\infty(1-q^{i})} = \frac{J_2^2}{J_1}.\qedhere
\end{align*}

\end{proof}

\section{The double-sum $f_{1,3,1}(q^2,q^2;q^4)$}
\label{f131Thsection}
This section is devoted to proving $f_{1,3,1}(q^2,q^2;q^4) = j(q^2;q^4)$. We use Proposition 6.1 from (\cite{HM}) which reads as follows:

\begin{proposition}For $x, y \in \mathbf{C}\backslash \{ 0 \}$
\begin{equation*}
\begin{split}
f_{a,b,c}(x,y;q) = &f_{a,b,c}(-x^2q^{a},-y^2q^{c};q^4)-xf_{a,b,c}(-x^2q^{3a},-y^2q^{c+2b};q^4)\\
&-yf_{a,b,c}(-x^2q^{a+2b},-y^2q^{3c};q^4)\\&+xyq^bf_{a,b,c}(-x^2q^{3a+2b},-y^2q^{3c+2b};q^4).
\end{split}
\end{equation*}
\end{proposition}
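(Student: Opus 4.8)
The plan is to prove the identity by splitting the summation variables $r$ and $s$ according to parity: write $r=2R+\varepsilon_1$ and $s=2S+\varepsilon_2$ with $\varepsilon_1,\varepsilon_2\in\{0,1\}$. The key preliminary observation is that this split is compatible with the sign conditions defining $f_{a,b,c}$: for each fixed value of $\varepsilon_1$ one has $2R+\varepsilon_1\ge 0\iff R\ge0$ and $2R+\varepsilon_1<0\iff R<0$, and similarly for $s$, $S$, $\varepsilon_2$. Consequently the index region $\{r,s\ge0\}$ decomposes, disjointly over the four parity classes, into copies of $\{R,S\ge0\}$, and likewise $\{r,s<0\}$ into copies of $\{R,S<0\}$, so that $f_{a,b,c}(x,y;q)$ splits as a sum of four sub-sums of the operator form $\big(\sum_{R,S\ge0}-\sum_{R,S<0}\big)$ with no boundary or theta-function terms appearing. (This is exactly what keeps the decomposition inside the class of type I double-sums, in contrast with the shift relation of Proposition \ref{f2}.)

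Next I would carry out the exponent bookkeeping in each parity class. Using $\binom{2R}{2}=4\binom{R}{2}+R$ and $\binom{2R+1}{2}=4\binom{R}{2}+3R$ (and the analogues in $S$), together with $(2R+\varepsilon_1)(2S+\varepsilon_2)=4RS+2\varepsilon_2R+2\varepsilon_1S+\varepsilon_1\varepsilon_2$, the quadratic form becomes
$$a\binom{r}{2}+brs+c\binom{s}{2}=4a\binom{R}{2}+4bRS+4c\binom{S}{2}+\big(a\kappa(\varepsilon_1)+2b\varepsilon_2\big)R+\big(c\kappa(\varepsilon_2)+2b\varepsilon_1\big)S+b\varepsilon_1\varepsilon_2,$$
where $\kappa(0)=1$ and $\kappa(1)=3$. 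Since also $x^ry^s=x^{\varepsilon_1}y^{\varepsilon_2}(x^2)^R(y^2)^S$ and $(-1)^{r+s}=(-1)^{\varepsilon_1+\varepsilon_2}$, absorbing the powers $q^{(a\kappa(\varepsilon_1)+2b\varepsilon_2)R}$ and $q^{(c\kappa(\varepsilon_2)+2b\varepsilon_1)S}$ into the first and second arguments (and using $(-1)^{R+S}(-u)^R(-v)^S=u^Rv^S$), the $(\varepsilon_1,\varepsilon_2)$-class collapses to
$$(-1)^{\varepsilon_1+\varepsilon_2}\,x^{\varepsilon_1}y^{\varepsilon_2}q^{b\varepsilon_1\varepsilon_2}\,f_{a,b,c}\big(-x^2q^{\,a\kappa(\varepsilon_1)+2b\varepsilon_2},\,-y^2q^{\,c\kappa(\varepsilon_2)+2b\varepsilon_1};\,q^4\big).$$
Setting $(\varepsilon_1,\varepsilon_2)=(0,0),(1,0),(0,1),(1,1)$ in turn produces, respectively, $f_{a,b,c}(-x^2q^a,-y^2q^c;q^4)$, $-xf_{a,b,c}(-x^2q^{3a},-y^2q^{c+2b};q^4)$, $-yf_{a,b,c}(-x^2q^{a+2b},-y^2q^{3c};q^4)$, and $xyq^bf_{a,b,c}(-x^2q^{3a+2b},-y^2q^{3c+2b};q^4)$; adding the four blocks gives the claimed identity.

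There is no conceptual obstacle here; the proof is a direct, bookkeeping-heavy rearrangement. The two points that repay a little care are (i) checking that the parity split really does respect the $\{r,s\ge0\}$ versus $\{r,s<0\}$ dichotomy, which is precisely what guarantees that only four clean type I sums appear and no theta terms creep in, and (ii) keeping straight the cancellation between the sign $(-1)^{r+s}$ and the signs $(-1)^{R+S}$ introduced by writing the new arguments with the factors $-x^2$ and $-y^2$. If one wishes to be scrupulous about convergence, one first restricts to parameter ranges where the defining series converge absolutely and the rearrangement is unconditionally valid, and then observes that the identity is one of formal power series in $q$ with Laurent-polynomial coefficients in $x$ and $y$, hence holds wherever both sides are defined.
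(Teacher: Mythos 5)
Your proof is correct and follows exactly the route the paper indicates: its entire justification is the remark that the proposition follows by decomposing the definition of $f_{a,b,c}(x,y;q)$ according to the parity of $r,s$, which is precisely your split $r=2R+\varepsilon_1$, $s=2S+\varepsilon_2$. Your observation that the parity split respects the regions $\{r,s\ge 0\}$ and $\{r,s<0\}$, together with the exponent and sign bookkeeping, correctly supplies the details the paper leaves to the reader.
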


The proposition can be proved easily by decomposing the definition of $f_{a,b,c}(x,y;q)$ according to the parity of $r,s$.

From this we obtain the following two equations
\begin{equation*}
\begin{split}
f_{1,3,1}(q^{1/2},q^{1/2};-q) = &f_{1,3,1}(q^2,q^2;q^4)-q^{1/2}f_{1,3,1}(q^4,q^8;q^4)\\
&-q^{1/2}f_{1,3,1}(q^8,q^4;q^4)-q^4f_{1,3,1}(q^{10},q^{10};q^4),
\end{split}
\end{equation*}
and
\begin{equation*}
\begin{split}
f_{1,3,1}(-q^{1/2},-q^{1/2};-q) = &f_{1,3,1}(q^2,q^2;q^4)+q^{1/2}f_{1,3,1}(q^4,q^8;q^4)\\
&+q^{1/2}f_{1,3,1}(q^8,q^4;q^4)-q^4f_{1,3,1}(q^{10},q^{10};q^4).
\end{split}
\end{equation*}
As noted before $f_{1,3,1}(q^{10},q^{10};q^4)=0$ (Lemma \ref{lemmaf0_2}). Now adding the two equations above we get
\begin{equation}
2f_{1,3,1}(q^2,q^2;q^4)=f_{1,3,1}(q^{1/2},q^{1/2};-q)+f_{1,3,1}(-q^{1/2},-q^{1/2};-q).
\end{equation}
Now we apply Theorem \ref{f131decomposition} to the two terms in the right-hand side separately. We note that if we take $l$ to be odd, then the last quotient in Theorem \ref{f131decomposition} from the two terms simply cancel out yielding
\begin{equation}
\label{after95}
\begin{split}
2f_{1,3,1}(q^2,q^2;q^4)=&j(q^{1/2};-q)m(q^4,q^{2l};q^8)
+j(q^{1/2};-q)m(q^4,q^{-2l};q^8)\\
&+j(-q^{1/2};-q)m(q^4,q^{2l};q^8)+j(-q^{1/2};-q)m(q^4,q^{-2l};q^8).
\end{split}
\end{equation}
From \eqref{jinto2sums} we obtain the following two equations:
\begin{align*}
&j(q^{1/2};-q) = j(q^2;q^4)-q^{1/2}j(q^4;q^4),\\
&j(-q^{1/2};-q) = j(q^2;q^4) + q^{1/2}j(q^4;q^4).
\end{align*}
Since $j(q^4;q^4) =0$ we get
\begin{equation}
j(q^{1/2};-q) =j(-q^{1/2};-q)= j(q^2;q^4).
\end{equation}
So equation \eqref{after95} reduces to
\begin{equation}
\label{fprefinal}
f_{1,3,1}(q^2,q^2;q^4)=j(q^{1/2};-q)(m(q^4,q^{2l};q^8)+m(q^4,q^{-2l};q^8)).
\end{equation}

Now we work with $m(q^4,q^{-2l};q^8)$. The Appell--Lerch function $m$ satisfies some basic relations as given in \eqref{mrelation1} and \eqref{mrelation2}. Applying \eqref{mrelation1} and then \eqref{mrelation2} we get the following
\begin{align*}
m(q^4,q^{-2l};q^8) &= q^{-4}m(q^{-4},q^{2l};q^8)\\
&= 1 - m(q^4,q^{2l};q^8).
\end{align*}
Where we have used \eqref{mrelation2} in the form $xm(x,z;q) =1-m(qx,z;q)$.
Using this in \eqref{fprefinal} yields

\begin{theorem}
\label{f131Th}
$$f_{1,3,1}(q^2,q^2;q^4) = j(q^{1/2};-q) = j(-q^{1/2};-q) = j(q^2;q^4).$$
\end{theorem}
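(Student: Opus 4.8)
The plan is to feed $f_{1,3,1}(q^2,q^2;q^4)$ into the Appell--Lerch decomposition of Theorem \ref{f131decomposition} and watch everything collapse. First I would invoke the parity-split identity quoted above (Proposition 6.1 of \cite{HM}, which decomposes $f_{a,b,c}(x,y;q)$ into four copies of $f_{a,b,c}(\cdot,\cdot;q^4)$ sorted by the parities of the summation indices $r,s$) with $(a,b,c)=(1,3,1)$ at the two specializations $x=y=q^{1/2}$ and $x=y=-q^{1/2}$, base $-q$. Adding the two resulting identities cancels every term carrying an odd power of $q^{1/2}$, and Lemma \ref{lemmaf0_2} disposes of the surviving $f_{1,3,1}(q^{10},q^{10};q^4)$, leaving
$$2f_{1,3,1}(q^2,q^2;q^4)=f_{1,3,1}(q^{1/2},q^{1/2};-q)+f_{1,3,1}(-q^{1/2},-q^{1/2};-q).$$

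Next I would apply Theorem \ref{f131decomposition} to each term on the right with the free parameter $l$ chosen odd. The theta-quotient tail there involves $x,y$ only through $x^2$, $y^2$, $xy$, and a prefactor $x^{l+1}y$; for odd $l$ this prefactor changes sign as $(x,y)$ goes from $(q^{1/2},q^{1/2})$ to $(-q^{1/2},-q^{1/2})$ while every other ingredient is unchanged, so the two tails cancel upon addition. The two Appell--Lerch pieces coincide across the two terms (both reduce to $m(q^4,q^{2l};q^8)$ and $m(q^4,q^{-2l};q^8)$), which gives
$$2f_{1,3,1}(q^2,q^2;q^4)=\big(j(q^{1/2};-q)+j(-q^{1/2};-q)\big)\big(m(q^4,q^{2l};q^8)+m(q^4,q^{-2l};q^8)\big).$$
Now I would simplify each factor. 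The identity \eqref{jinto2sums} at base $-q$ gives $j(\pm q^{1/2};-q)=j(q^2;q^4)\mp q^{1/2}j(q^4;q^4)$, and $j(q^4;q^4)=0$ since it contains the vanishing factor $(1;q^4)_\infty$; hence both theta values equal $j(q^2;q^4)$. For the Appell--Lerch factor, relation \eqref{mrelation1} rewrites $m(q^4,q^{-2l};q^8)$ as $q^{-4}m(q^{-4},q^{2l};q^8)$, and relation \eqref{mrelation2} in the form $x\,m(x,z;q)=1-m(qx,z;q)$ (with $x=q^{-4}$, modulus $q^8$) turns this into $1-m(q^4,q^{2l};q^8)$, so the Appell--Lerch factor is identically $1$. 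Substituting both reductions yields $f_{1,3,1}(q^2,q^2;q^4)=j(q^2;q^4)$, and the chain $j(q^{1/2};-q)=j(-q^{1/2};-q)=j(q^2;q^4)$ is just the $j$-identity already used.

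The step I expect to be most delicate is the bookkeeping inside Theorem \ref{f131decomposition} once the modulus is $-q$ and the arguments are half-integer powers of $q$: one must track the signs and the powers carefully and confirm that the theta quotients genuinely cancel for $l$ odd (and that no other choice of $l$ is needed). A cleaner variant that avoids the half-integer powers is to apply Theorem \ref{f131decomposition} directly to $f_{1,3,1}(q^2,q^2;q^4)$ (modulus $q^4$, $x=y=q^2$) with $l$ odd: the tail then contains the factor $j(q^{16(l+1)};q^{32})$, which vanishes because $16(l+1)$ is a multiple of the modulus $32$, and the same $m$-function manipulation closes the argument; deciding which specialization keeps the formulas tidiest is where most of the effort goes.
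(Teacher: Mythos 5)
Your proposal is correct and follows essentially the same route as the paper: the parity-split identity of Proposition 6.1 of \cite{HM} applied at $x=y=\pm q^{1/2}$ with base $-q$, Lemma \ref{lemmaf0_2} to remove $f_{1,3,1}(q^{10},q^{10};q^4)$, the odd-$l$ cancellation of the theta-quotient tail in Theorem \ref{f131decomposition}, identity \eqref{jinto2sums} with $j(q^4;q^4)=0$, and the relations \eqref{mrelation1}--\eqref{mrelation2} to show the Appell--Lerch factor equals $1$. Your closing variant---applying Theorem \ref{f131decomposition} directly to $f_{1,3,1}(q^2,q^2;q^4)$ with modulus $q^4$ and $l$ odd, so that the tail carries the vanishing factor $j(q^{16(l+1)};q^{32})$ and one is left with $j(q^2;q^4)\bigl(m(-q^{16},q^{8l};q^{32})+m(-q^{16},q^{-8l};q^{32})\bigr)=j(q^2;q^4)$---also checks out and indeed avoids the half-integer-power detour.
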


\section{The double-sum $f_{1,3,1}(q^2,q^2;q^2) + q^3f_{1,3,1}(q^6,q^6;q^2)$}
\label{f131q3f131section}
In this section we evaluate the expression 
$$f_{1,3,1}(q^2,q^2;q^2) + q^3f_{1,3,1}(q^6,q^6;q^2).$$ We apply Theorem \ref{f131decomposition} to both double-sums. The first two terms in the right-hand side of Theorem \ref{f131decomposition} are 
$$j(y;q)m(-q^5x/y^3,q^{2l}y/x;q^8)$$ and 
$$j(x;q)m(-q^5y/x^3,x/q^{2l}y;q^8).$$ We note that for both $f_{1,3,1}(q^2,q^2;q^2)$ and $f_{1,3,1}(q^6,q^6;q^2)$, both the theta functions, $j$, are $0$. If we choose $l\not\equiv 0\ (\textrm{mod}\ 4)$ in the theorem the $m$ terms won't have any poles and hence the two terms can be ignored. We take $l=-1$ and get the following

\begin{equation}
\begin{split}
&f_{1,3,1}(q^2,q^2;q^2)+ q^3f_{1,3,1}(q^6,q^6;q^2)\\
&=\frac{q^{-2}J_{4,8}J_{16,32}j(q^2;q^{16})j(q^{20};q^{32})}{j(-q^{6};q^{16})j(-q^{-2};q^{16})} + \frac{q^{5}J_{4,8}J_{16,32}j(q^{10};q^{16})j(q^{36};q^{32})}{j(-q^{14};q^{16})j(-q^{6};q^{16})}.
\end{split}
\end{equation}
Now we want to simplify this expression. We first note that the denominators differ only by a $q$ power as follows. Equation \eqref{jIdentityB} gives us the following

$$j(-q^{16}\cdot q^{-2};q^{16}) = q^2j(-q^{-2};q^{16}),$$
which gives
$$j(-q^{-2};q^{16}) = q^{-2}j(-q^{14};q^{16}).$$
And similarly we obtain $j(q^{36};q^{32}) = q^{-4}j(q^4;q^{32})$ in the second numerator.
Now we use \eqref{jx2Identity} for $j(q^{20};q^{32})$ and $j(q^4;q^{32})$
\begin{align*}
j(q^{20};q^{32}) = j(q^{10};q^{16})j(-q^{10};q^{16})\frac{(q^{32};q^{32})_\infty}{(q^{16};q^{16})_{\infty}^2},\\
j(q^{4};q^{32}) = j(q^{2};q^{16})j(-q^{2};q^{16})\frac{(q^{32};q^{32})_\infty}{(q^{16};q^{16})_{\infty}^2}.
\end{align*}
And finally we get the following:
\begin{equation}
\label{f131intermediate}
\frac{J_{4,8}J_{16,32}j(q^2;q^{16})j(q^{10};q^{16})(q^{32};q^{32})_\infty}{j(-q^6;q^{16})j(-q^{14};q^{16})(q^{16};q^{16})_\infty^2}(j(-q^{10};q^{16}) - qj(-q^2;q^{16})).
\end{equation}
Next we use \eqref{jinto2sums} to see that
$$j(-q^2;q^{16}) - q^{-1}j(-q^{10};q^{16}) = j(q^{-1};q^{4}),$$
which gives
$$j(-q^{10};q^{16}) - qj(-q^2;q^{16}) = -qj(q^{-1};q^{4}).$$
Thus we obtain the final expression from \eqref{f131intermediate} as follows
$$\frac{-qJ_{4,8}J_{16,32}j(q^2;q^{16})j(q^{10};q^{16})(q^{32};q^{32})_{\infty}j
(q^{-1};q^4)}{j(-q^6;q^{16})j(-q^{14};q^{16})(q^{16};q^{16})_\infty^2},$$
which we can rewrite as
$$\frac{-qJ_{4,8}J_{16,32}j(q^2;q^{16})j(q^{10};q^{16})J_{32}j
(q^{-1};q^4)}{j(-q^6;q^{16})j(-q^{14};q^{16})J_{16}^2}.$$
From \eqref{jIdentityB} we obtain $-qj(q^{-1};q^4) = j(q^3;q^4)$ and from \eqref{jIdentityA} we further get $j(q^3;q^4) = j(q;q^4)$. So we have
\begin{equation*}
-qj(q^{-1};q^4) = j(q;q^4)= J_{1,4}.
\end{equation*}
So we get the following expression
\begin{equation}
\begin{split}
\label{interm1}
f_{1,3,1}(q^2,q^2;q^2) &+ q^3f_{1,3,1}(q^6,q^6;q^2)\\
&=\frac{J_{4,8}J_{16,32}j(q^2;q^{16})j(q^{10};q^{16})J_{32}J_{1,4}}{j(-q^6;q^{16})j(-q^{14};q^{16})J_{16}^2}.
\end{split}
\end{equation}
By \eqref{jIdentityA} we first note that in the denominator 
$$j(-q^{14};q^{16})j(-q^6;q^{16}) = j(-q^2;q^{16})j(-q^{10};q^{16}).$$
Next we use \eqref{jxToProduct} where $x=q^2$ and $q=q^8$ to obtain
$$j(q^2;q^{16})j(q^{10};q^{16}) = j(q^2;q^8)\frac{J_{16}^2}{J_8},$$
and similarly we get
$$j(-q^2;q^{16})j(-q^{10};q^{16}) = j(-q^2;q^8)\frac{J_{16}^2}{J_8}.$$
After substituting into \eqref{interm1} and cancelling we obtain
\begin{equation}
\begin{split}
\label{interm2}
f_{1,3,1}(q^2,q^2;q^2) &+ q^3f_{1,3,1}(q^6,q^6;q^2)
=\frac{J_{4,8}J_{16,32}J_{2,8}J_{32}J_{1,4}}{\overline{J}_{2,8}J_{16}^2}.
\end{split}
\end{equation}

Now we use Proposition \ref{bigJidentities} to write
\begin{align*}
&J_{4,8} =\frac{J_4^2}{J_8}, &J_{16,32} = \frac{J_{16}^2}{J_{32}},\\
&J_{2,8} = \frac{J_2J_{8}}{J_{4}}, &\overline{J}_{2,8} = \frac{J_4^2}{J_2}.
\end{align*}
Substituting these into \eqref{interm2} yields
\begin{equation*}
\begin{split}
f_{1,3,1}(q^2,q^2;q^2) &+ q^3f_{1,3,1}(q^6,q^6;q^2)
=\frac{J_4^2J_{16}^2J_{2}J_{8}J_{32}J_2J_{1}J_4}{J_8J_{32}J_4J_4^2J_{16}^2J_2}.
\end{split}
\end{equation*}
Additional cancellation results in

\begin{theorem}We have
\begin{equation}
f_{1,3,1}(q^2,q^2;q^2) + q^3f_{1,3,1}(q^6,q^6;q^2) = J_1J_2.
\end{equation}
\end{theorem}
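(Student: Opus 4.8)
The plan is to evaluate the pair $f_{1,3,1}(q^2,q^2;q^2)+q^3f_{1,3,1}(q^6,q^6;q^2)$ by applying Theorem \ref{f131decomposition} to each summand separately with a well-chosen shift parameter $l$. First I would observe that for both $f_{1,3,1}(q^2,q^2;q^2)$ and $f_{1,3,1}(q^6,q^6;q^2)$ the two leading Appell--Lerch terms $j(y;q)m(\dots)$ and $j(x;q)m(\dots)$ in the decomposition vanish, because the relevant theta values $j(q^2;q^2)$ and $j(q^6;q^2)$ are identically zero. One must take care that the surviving $m$-functions have no poles, which is guaranteed by choosing $l\not\equiv 0\pmod 4$; the convenient choice is $l=-1$. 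This leaves only the last (quotient-of-theta-functions) term of Theorem \ref{f131decomposition} from each of the two pieces, and the bulk of the argument then becomes a theta-function manipulation.

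Next I would combine the two resulting quotients over a common setup. The two denominators differ only by a power of $q$: using \eqref{jIdentityB} one rewrites $j(-q^{-2};q^{16})=q^{-2}j(-q^{14};q^{16})$, and similarly \eqref{jIdentityB} gives $j(q^{36};q^{32})=q^{-4}j(q^4;q^{32})$ in the second numerator. Then I would apply \eqref{jx2Identity} to expand $j(q^{20};q^{32})$ and $j(q^4;q^{32})$ as products of $j(\pm q^{10};q^{16})$ and $j(\pm q^2;q^{16})$ respectively, times the universal factor $(q^{32};q^{32})_\infty/(q^{16};q^{16})_\infty^2$. After this the two terms share all factors except a pair of the form $j(-q^{10};q^{16})-qj(-q^2;q^{16})$, which by \eqref{jinto2sums} (applied with base $q^4$) collapses to $-qj(q^{-1};q^4)$, and then $-qj(q^{-1};q^4)=j(q^3;q^4)=j(q;q^4)=J_{1,4}$ by \eqref{jIdentityB} and \eqref{jIdentityA}.

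At this stage one has a closed product expression built from $J_{4,8}$, $J_{16,32}$, $J_{32}$, $J_{1,4}$, $j(q^2;q^{16})j(q^{10};q^{16})$, and $j(-q^6;q^{16})j(-q^{14};q^{16})$. I would then use \eqref{jIdentityA} to replace the denominator pair by $j(-q^2;q^{16})j(-q^{10};q^{16})$, and \eqref{jxToProduct} (with $x=q^2$, $q=q^8$) to write each of $j(q^2;q^{16})j(q^{10};q^{16})$ and $j(-q^2;q^{16})j(-q^{10};q^{16})$ as $j(\pm q^2;q^8)\,J_{16}^2/J_8$. The $J_{16}^2/J_8$ factors cancel between numerator and denominator, reducing the expression to $J_{4,8}J_{16,32}J_{2,8}J_{32}J_{1,4}/(\overline{J}_{2,8}J_{16}^2)$. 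Finally, substituting the identities from Proposition \ref{bigJidentities} together with $J_{4,8}=J_4^2/J_8$, $J_{16,32}=J_{16}^2/J_{32}$, $J_{2,8}=J_2J_8/J_4$, $\overline{J}_{2,8}=J_4^2/J_2$ turns everything into a monomial in the $J_m$'s, and a routine cancellation leaves $J_1J_2$.

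The main obstacle is bookkeeping rather than conceptual: one has to track half a dozen theta functions with bases $q^4$, $q^8$, $q^{16}$, $q^{32}$ through repeated applications of \eqref{jIdentityA}, \eqref{jIdentityB}, \eqref{jx2Identity}, \eqref{jxToProduct} without dropping a sign or a power of $q$, and one must verify at the outset that the choice $l=-1$ genuinely kills the two $m$-terms (i.e.\ that $j(q^2;q^2)=j(q^6;q^2)=0$ and that no pole is introduced). Once the correct intermediate product \eqref{interm1} is in hand, the remaining steps are forced and the final collapse to $J_1J_2$ is immediate.
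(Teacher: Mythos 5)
Your proposal is correct and follows essentially the same route as the paper's Section \ref{f131q3f131section}: applying Theorem \ref{f131decomposition} with $l=-1$ so that only the theta-quotient terms survive, then carrying out the same chain of reductions via \eqref{jIdentityB}, \eqref{jx2Identity}, \eqref{jinto2sums}, \eqref{jIdentityA}, \eqref{jxToProduct}, and Proposition \ref{bigJidentities} down to $J_1J_2$.
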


\section{The double-sums $f_{4,4,3}(\pm q^3,\pm q^2;q)$ }\label{section:Andrews-mock}
We have that
\begin{theorem} \label{theorem:f443} We have that
\begin{align}
    f_{4,4,3}(q^3,q^2;q)
    &=J_{1}\psi(q)\label{equation:f443-A}\\
    f_{4,4,3}(-q^3,-q^2;q)
    &=\frac{1}{4}\overline{J}_{0,3}\mu(q^3)
    -\frac{1}{2}\overline{J}_{1,3}\phi(q)+\Theta(q), \label{equation:f443-B}
\end{align}
where $\Theta(q)$ is a sum of quotients of theta functions, $\psi(q)$ is a third-order mock theta function, $\mu(q)$ is a second-order mock theta function, and $\phi(q)$ is a sixth-order mock theta function. 
\end{theorem}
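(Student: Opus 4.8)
The plan is to handle the two double-sums $f_{4,4,3}(\pm q^3, \pm q^2; q)$ by first reducing each to a form where the general machinery of \cite{HM} for expanding $f_{a,b,c}$ in terms of Appell--Lerch functions and theta quotients can be applied, and then identifying the resulting Appell--Lerch pieces with the known mock theta functions $\psi$, $\mu$, $\phi$. The parameters here are $(a,b,c)=(4,4,3)$, so $b^2-ac=16-12=4>0$, confirming that these are genuine indefinite (Hecke-type) sums of type I symmetry and that the Hickerson--Mortenson expansion applies. Since $4\binom{r}{2}+4rs+3\binom{s}{2}$ is not of the ``nice'' form $f_{1,b,1}$ covered directly by Theorem \ref{f131decomposition}, the first step is to use the change-of-basis / dissection identities (Proposition \ref{f1}, Proposition \ref{f2}, and especially the parity-splitting Proposition 6.1 of \cite{HM} quoted in Section \ref{f131Thsection}) to rewrite $f_{4,4,3}(x,y;q)$ as a $q^N$-series (for a suitable modulus $N$) of double-sums with smaller discriminant-type parameters, ideally $f_{1,b,1}$-type sums to which Theorem \ref{f131decomposition} applies, or sums that collapse via Propositions \ref{f1}--\ref{f2} to theta functions.

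For \eqref{equation:f443-A}, the expected route is: after the reduction, each surviving Appell--Lerch term $m(\cdot,\cdot;\cdot)$ should, after using the translation relations \eqref{mrelation1}--\eqref{mrelation2} and the change-of-variable identity \eqref{jinto2sums} (exactly as in Section \ref{f131Thsection}), assemble into a single $m$-function whose $z$-parameter can be specialized so that $m(x,z;q)$ becomes (up to a theta factor) one of the standard Appell--Lerch representations of Ramanujan's third-order mock theta $\psi(q)=\sum_{n\ge1} q^{n^2}/(q;q^2)_n$. Since \eqref{Liu443-positive} already tells us $f_{4,4,3}(q^3,q^2;q)=(q;q)_\infty\cdot(\text{LHS of }\eqref{Liu4_10})$ and the LHS of \eqref{Liu4_10} is $\psi(q)$, the identity \eqref{equation:f443-A} is in fact forced; the real content is to re-derive it intrinsically from the Hecke-sum side, confirming consistency and pinning down which $m$-function appears. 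I would cross-check the final $m$-expression against the known expansion of $\psi$ recorded in \cite[Section 5]{HM}.

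For \eqref{equation:f443-B}, the same reduction is applied to $f_{4,4,3}(-q^3,-q^2;q)$; the sign change flips $x\mapsto -x$, $y\mapsto -y$, which in the Hickerson--Mortenson expansion sends the relevant $m(x,z;q)$ to $m(-x,-z;q)$-type objects and the $\overline{J}$-theta factors $\overline{J}_{0,3},\overline{J}_{1,3}$ naturally emerge from the $j(\pm q^a;\cdot)$ prefactors. Here the answer genuinely splits into two distinct mock pieces — the second-order $\mu(q)$ evaluated at $q^3$ and the sixth-order $\phi(q)$ — plus a theta-quotient remainder $\Theta(q)$, so the task is to (i) match each $m$-term to the Appell--Lerch representation of $\mu(q^3)$ resp.\ $\phi(q)$ from \cite[Section 5]{HM}, tracking the rational coefficients $\tfrac14$ and $-\tfrac12$ and the powers of $q$, and (ii) collect all leftover products/quotients of theta functions into $\Theta(q)$ and verify they combine into something that is a pure (quasi-)modular theta quotient, i.e.\ mock-free.

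The main obstacle I anticipate is step (i)--(ii) of the second identity: decomposing $f_{4,4,3}(-q^3,-q^2;q)$ cleanly into \emph{exactly} a $\mu(q^3)$-part, a $\phi(q)$-part, and a theta remainder requires choosing the auxiliary integer parameters ($l$ in Theorem \ref{f131decomposition}, and the dissection moduli) so that the non-mock ``tail'' quotients either cancel in pairs or fuse into recognizable theta functions — the same delicate cancellation that made $f_{1,3,1}(q^2,q^2;q^4)$ collapse in Section \ref{f131Thsection}, but now with two independent mock components surviving instead of zero, so the bookkeeping of theta quotients (applying \eqref{jIdentityA}, \eqref{jIdentityB}, \eqref{jx2Identity}, \eqref{jxToProduct}, and Proposition \ref{bigJidentities} repeatedly) is substantially heavier and is where errors are most likely to creep in. A secondary subtlety is identifying the order of the surviving mock theta functions correctly: one must verify that the $z$-specializations land on the \emph{second}-order $\mu$ and \emph{sixth}-order $\phi$ as claimed, rather than some other family, which amounts to comparing $q$-expansions to enough terms and matching against the catalogued Appell--Lerch forms.
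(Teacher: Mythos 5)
Your treatment of \eqref{equation:f443-A} is fine and is the paper's own argument: given \eqref{Liu443-positive}, one simply compares the left-hand side of \eqref{Liu4_10} with the $q$-hypergeometric form of $\psi(q)$ catalogued in \cite[Section 5]{HM}. The gap is in \eqref{equation:f443-B}. Your plan is to reduce $f_{4,4,3}(-q^3,-q^2;q)$ to $f_{1,b,1}$-type sums using Propositions \ref{f1}, \ref{f2} and the parity-splitting proposition quoted in Section \ref{f131Thsection}, so that Theorem \ref{f131decomposition} can be invoked. None of these tools changes the parameters $(a,b,c)$: Propositions \ref{f1} and \ref{f2} only shift the arguments $x,y$, and the parity splitting rewrites $f_{a,b,c}(x,y;q)$ as a combination of $f_{a,b,c}(\cdot,\cdot;q^4)$ with the \emph{same} $(a,b,c)$. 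So you never arrive at an $f_{1,3,1}$-type sum, and you supply no mechanism that produces an Appell--Lerch expansion of $f_{4,4,3}$ at all. This is exactly where the paper needs outside input: it invokes the general expansion theorem \cite[Theorem 4.2]{MZ22}, stated here as Theorem \ref{theo:general-fabc}, valid for any $(a,b,c)$ with $D=b^2-ac>0$, which writes $f_{4,4,3}(x,y;q)$ as $G_{4,4,3}(x,y,-1,-1;q)$ plus the explicit theta expression $\theta_{4,4,3}(x,y;q)$ (Proposition \ref{prop:f443A}).

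Even granting some such expansion, your sketch defers precisely the substantive steps, which your cited tools cannot supply. In the paper's proof: (i) the whole first family of Appell--Lerch terms, $j(-q^{3};q^{4})\sum_{t=0}^{3}q^{-\binom{t+1}{2}}m(-q^{6-4t},-1;q^{16})$, vanishes by \eqref{mrelation1} (Proposition \ref{prop:msplit}); (ii) the surviving terms sit at modulus $q^{12}$, and reaching the sixth-order $\phi(q)=2m(q,-1;q^{3})$ requires the change-of-modulus identity \cite[Theorem 3.5]{HM} to convert $m(-q^{7},-1;q^{12})-q^{-1}m(-q,-1;q^{12})$ into $m(q^{2},-1;q^{3})+\Theta_2(q)$, after which \eqref{mrelation1}--\eqref{mrelation2} give $m(q^{2},-1;q^{3})=1-\tfrac{1}{2}\phi(q)$, producing the coefficient $-\tfrac12$; (iii) the coefficient $\tfrac14$ and an extra theta piece come from $\mu(q^{3})=4m(-q^{3},-1;q^{12})-J_{6,12}^{2}/J_{3}^{3}$ \cite[(5.3)]{HM}; and (iv) the leftover $\theta_{4,4,3}(-q^3,-q^2;q)$ is not simplified but one must check its denominator theta functions $j(q^{4e+2};q^{16})j(q^{4d+1};q^{16})$ never vanish so it may be absorbed into $\Theta(q)$. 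Your proposal names these only as anticipated obstacles, and the identities you list (\eqref{jinto2sums}, Theorem \ref{f131decomposition}, the theta manipulations) do not include the modulus-changing identity or the general $f_{a,b,c}$ expansion, so as written the argument for \eqref{equation:f443-B} does not go through.
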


For the definitions of the mock theta functions, we refer the reader to \cite[Section $5$]{HM} and references therein.

The proof of (\ref{equation:f443-A}) is trivial, but the proof of (\ref{equation:f443-B}) is very involved. We set up the pieces for the proof.  We begin with the following expression involving Appell--Lerch functions:
\begin{definition} Let $a,b,$ and $c$ be positive integers with  $D:=b^2-ac>0$.  Then
\begin{align*} 
G_{a,b,c}(x,y,z_1,z_0;q)
&:=\sum_{t=0}^{a-1}(-y)^tq^{c\binom{t}{2}}j(q^{bt}x;q^a)
m\Big (-q^{a\binom{b+1}{2}-c\binom{a+1}{2}-tD}\frac{(-y)^a}{(-x)^b},z_0;q^{aD}\Big ) \\
&\ \ \ \ \ +\sum_{t=0}^{c-1}(-x)^tq^{a\binom{t}{2}}j(q^{bt}y;q^c)
m\Big (-q^{c\binom{b+1}{2}-a\binom{c+1}{2}-tD}\frac{(-x)^c}{(-y)^b},z_1;q^{cD}\Big ).
\end{align*}
\end{definition}

The following theorem is crucial in the proof of (\ref{equation:f443-B}).
\begin{theorem}\cite[Theorem 4.2]{MZ22}\label{theo:general-fabc} Let $a,b,$ and $c$ be positive integers with  $D:=b^2-ac>0$. For generic $x$ and $y$, we have
\begin{align*}
& f_{a,b,c}(x,y;q)=G_{a,b,c}(x,y,-1,-1;q)+\frac{1}{j(-1;q^{aD})j(-1;q^{cD})}\cdot \theta_{a,b,c}(x,y;q),
\end{align*}
where
\begin{align*}
&\theta_{a,b,c}(x,y;q):=
\sum_{d^*=0}^{b-1}\sum_{e^*=0}^{b-1}q^{a\binom{d-c/2}{2}+b( d-c/2 ) (e+a/2  )+c\binom{e+a/2}{2}}(-x)^{d-c/2}(-y)^{e+a/2}\\
&\cdot\sum_{f=0}^{b-1}q^{ab^2\binom{f}{2}+\big (a(bd+b^2+ce)-ac(b+1)/2 \big )f} (-y)^{af}
\cdot j(-q^{c\big ( ad+be+a(b-1)/2+abf \big )}(-x)^{c};q^{cb^2})\\
&\cdot j(-q^{a\big ( (d+b(b+1)/2+bf)D +c(a-b)/2\big )}(-x)^{-ac}(-y)^{ab};q^{ab^2D})\\
&\cdot \frac{(q^{bD};q^{bD})_{\infty}^3j(q^{ D(d+e)+ac-b(a+c)/2}(-x)^{b-c}(-y)^{b-a};q^{bD})}
{j(q^{De+a(c-b)/2}(-x)^b(-y)^{-a};q^{bD})j(q^{Dd+c(a-b)/2}(-y)^b(-x)^{-c};q^{bD})}.
\end{align*}
Here $d:=d^*+\{c/2 \}$ and $e:=e^*+\{ a/2\}$, with  $0\le \{\alpha \}<1$ denoting fractional part of $\alpha$.
\end{theorem}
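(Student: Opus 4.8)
The plan is to prove the decomposition by expanding the Appell--Lerch building blocks in $G_{a,b,c}(x,y,-1,-1;q)$ into their defining series and matching the resulting single-signed double-sum against the indefinite sum $f_{a,b,c}(x,y;q)$, collecting the mismatch into the theta correction $\theta_{a,b,c}(x,y;q)$. The guiding principle is that the hypothesis $D:=b^2-ac>0$ makes the quadratic form $Q(r,s):=a\binom{r}{2}+brs+c\binom{s}{2}$ indefinite: along a ``hyperbolic'' lattice direction the signed sum in \eqref{fabc} produces exactly the geometric-series factor $1/(1-q^{r-1}xz)$ characteristic of $m(x,z;q)$, while along the transverse direction it sums to an ordinary theta function $j$. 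This is precisely the structure displayed in the definition of $G_{a,b,c}$, where each $m(\cdots;q^{aD})$ or $m(\cdots;q^{cD})$ carries the hyperbolic direction and each $j(q^{bt}x;q^a)$ or $j(q^{bt}y;q^c)$ carries the transverse one. Since the claim is only for generic $x,y$, the core of the argument is a formal $q$-series manipulation; where convenient I would use the shift formula of Proposition \ref{f2} and the inversion symmetry of Proposition \ref{f1} to reduce $x,y$ to representatives in a fixed fundamental region.

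The main computation splits each summation index into residue classes. Writing $r=a\rho+t$ with $0\le t<a$ isolates the $t$-sum in the first half of $G_{a,b,c}$, and writing $s=c\sigma+t$ with $0\le t<c$ isolates the second half. For a fixed residue $t$, completing the square in $Q(r,s)$ along the free index converts the inner single-signed sum into an Appell--Lerch function of modulus $q^{aD}$ (respectively $q^{cD}$); the exponent shift $-q^{a\binom{b+1}{2}-c\binom{a+1}{2}-tD}(-y)^a/(-x)^b$ and the theta prefactor $j(q^{bt}x;q^a)$ are exactly what this completion produces, so that the ``bulk'' of $f_{a,b,c}$ is reproduced by $G_{a,b,c}(x,y,-1,-1;q)$.

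The delicate point is that replacing the indefinite signed sum $\left(\sum_{r,s\ge 0}-\sum_{r,s<0}\right)$ by the geometric expansion of the $m$-functions is not an identity on the nose: the two disagree on a bounded boundary strip of the lattice, and this discrepancy must equal $\theta_{a,b,c}(x,y;q)/\big(j(-1;q^{aD})j(-1;q^{cD})\big)$. I would compute it by tracking exactly which lattice points are gained or lost when $1/(1-q^{r-1}xz)$ is expanded as a one-sided geometric series in place of the two-sided signed sum. Completing the square in the two remaining framing directions then reassembles these boundary contributions into the triple-nested theta product of $\theta_{a,b,c}$, with the moduli $q^{bD}$, $q^{ab^2}$, and $q^{ab^2D}$ arising respectively from the cross term $brs$ and from the two framing summations over $f$ and over the $d^*,e^*$ classes; the normalizing factor $1/\big(j(-1;q^{aD})j(-1;q^{cD})\big)$ reflects the $z_0=z_1=-1$ normalization of the Appell--Lerch functions in $G_{a,b,c}$.

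The main obstacle will be the correction term. Pinning down the exact $q$-powers and signs in $\theta_{a,b,c}$ --- in particular the fractional-part shifts $d=d^*+\{c/2\}$ and $e=e^*+\{a/2\}$ that appear because $a$ and $c$ may be odd --- requires a careful and lengthy bookkeeping of the boundary lattice sum, and verifying that these boundary terms collapse into the stated quotient of theta functions is where essentially all the difficulty of the proof is concentrated. Once the correction is matched, adding it to the bulk $G_{a,b,c}(x,y,-1,-1;q)$ recovers $f_{a,b,c}(x,y;q)$ and completes the proof.
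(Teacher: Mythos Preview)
The paper does not prove this statement: it is quoted verbatim as \cite[Theorem~4.2]{MZ22} and used as a black box in the proof of Theorem~\ref{theorem:f443}. There is therefore no proof in the present paper to compare your proposal against.

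As for the proposal on its own terms: the architecture you describe --- splitting the summation indices into residue classes modulo $a$ and $c$, completing the square in the indefinite form $Q(r,s)$ so that the free direction produces an Appell--Lerch function of modulus $q^{aD}$ or $q^{cD}$, and then identifying the leftover boundary strip with $\theta_{a,b,c}$ --- is indeed the general mechanism behind results of this type in the Hickerson--Mortenson framework. But your text is a roadmap, not a proof: you explicitly say that ``essentially all the difficulty of the proof is concentrated'' in matching the boundary terms to the stated $\theta_{a,b,c}$, and you do not carry out any of that matching. In particular, the appearance of the moduli $q^{cb^2}$ and $q^{ab^2D}$, the triple nesting over $d^*,e^*,f$, and the fractional shifts $d=d^*+\{c/2\}$, $e=e^*+\{a/2\}$ all require substantial additional work (a further residue decomposition modulo $b$ and a quintuple-product or three-term theta identity to collapse the boundary sum into the displayed quotient), none of which is present here. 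If you intend to supply an independent proof rather than cite \cite{MZ22}, those steps must be written out in full.
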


We have the following intermediate proposition, whose proof is immediate.
\begin{proposition} \label{prop:f443A} We have
\begin{align*}
    G_{4,4,3}(x,y,-1,-1;q)
    &=\sum_{t=0}^{3}(-y)^tq^{3\binom{t}{2}}j(q^{4t}x;q^4)
    m\Big (-q^{10-4t}\frac{y^4}{x^4},-1;q^{16}\Big ) \\
&\qquad +\sum_{t=0}^{2}(-x)^tq^{4\binom{t}{2}}j(q^{4t}y;q^3)
m\Big (q^{6-4t}\frac{x^3}{y^4},-1;q^{12}\Big ),
\end{align*}
and
\begin{align*}
\theta_{4,4,3}(x,y;q)
&=
\sum_{d=0}^{3}\sum_{e=0}^{3}q^{4\binom{d-1}{2}+4( d-1 ) (e+2) + 3\binom{e+2}{2}}(-x)^{d-1}(-y)^{e+2}\\
&\cdot\sum_{f=0}^{4}q^{64\binom{f}{2}+\big (16d+42+12e \big )f} y^{4f}
\cdot j(q^{3\big ( 4d+4e+8+16f \big )}x^{3};q^{48})\\
&\cdot j(-q^{4\big ( 4d+42+16f) \big)} x^{-12}y^{16};q^{256})
\cdot \frac{(q^{16};q^{16})_{\infty}^3j(-q^{ 4d+4e}x;q^{16})}
{j(q^{4e-2}x^4y^{-4};q^{16})j(-q^{4d+2}y^4x^{-3};q^{16})}.
\end{align*}
\end{proposition}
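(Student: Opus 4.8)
The plan is to derive both identities by pure specialization: set $a=4$, $b=4$, $c=3$ in the definition of $G_{a,b,c}(x,y,z_1,z_0;q)$ with $z_1=z_0=-1$, and in the formula for $\theta_{a,b,c}(x,y;q)$ of Theorem~\ref{theo:general-fabc}, and then simplify the resulting binomials, exponents, and signs. The first thing I would record is the arithmetic data for this choice of parameters: $D=b^2-ac=16-12=4$, so that $aD=16$, $cD=12$, $bD=16$, $cb^2=48$, and $ab^2D=256$; and the fractional parts $\{c/2\}=1/2$, $\{a/2\}=0$, so that in Theorem~\ref{theo:general-fabc} one has $d=d^*+\tfrac12$ and $e=e^*$. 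In the statement of the proposition the summation variables called $d$ and $e$ are the $d^*,e^*$ of the theorem, running over $\{0,1,2,3\}$; accordingly, every occurrence of $d$ in the theorem is to be replaced by $d+\tfrac12$ while every occurrence of $e$ is left unchanged.

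For $G_{4,4,3}(x,y,-1,-1;q)$ the computation is short and involves no fractional parts, since here the outer summation variable is the integer $t$. In the first sum ($t=0,\dots,a-1=3$) the prefactor $(-y)^tq^{c\binom t2}$ is $(-y)^tq^{3\binom t2}$, the theta is $j(q^{bt}x;q^a)=j(q^{4t}x;q^4)$, and the exponent inside the $m$-argument is $a\binom{b+1}2-c\binom{a+1}2-tD=4\cdot10-3\cdot10-4t=10-4t$ with $(-y)^a/(-x)^b=y^4/x^4$, giving $m\big(-q^{10-4t}y^4/x^4,-1;q^{16}\big)$. In the second sum ($t=0,\dots,c-1=2$) the exponent is $c\binom{b+1}2-a\binom{c+1}2-tD=30-24-4t=6-4t$ and $(-x)^c/(-y)^b=-x^3/y^4$, so $-q^{6-4t}\cdot(-x^3/y^4)=q^{6-4t}x^3/y^4$, yielding $m\big(q^{6-4t}x^3/y^4,-1;q^{12}\big)$. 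This is exactly the first displayed formula.

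For $\theta_{4,4,3}(x,y;q)$ I would substitute $a=4,b=4,c=3$ throughout and carry out the simplifications carefully. The points needing attention are: (i) the coefficient of $f$ in the inner exponent, $a(bd+b^2+ce)-ac(b+1)/2$, becomes $16d+42+12e$ after the replacement $d\mapsto d+\tfrac12$ (the constant being $72-30=42$ rather than $64-30=34$, the extra $8$ coming precisely from the shift $ab\cdot\tfrac12$), and the outer prefactor exponent collapses to $4\binom{d-1}2+4(d-1)(e+2)+3\binom{e+2}2$ with monomial $(-x)^{d-1}(-y)^{e+2}$; (ii) evaluating $(-x)^c=-x^3$ and $(-x)^{-c}=-x^{-3}$ turns the two large-modulus thetas into $j\big(q^{3(4d+4e+8+16f)}x^3;q^{48}\big)$ and produces the sign in the denominator factor $j\big(-q^{4d+2}y^4x^{-3};q^{16}\big)$, while $(-x)^{-ac}=x^{-12}$ and $(-y)^{ab}=y^{16}$ are even powers and introduce no sign; (iii) the numerator theta exponent $D(d+e)+ac-b(a+c)/2$ collapses to $4d+4e$ (again via $d\mapsto d+\tfrac12$) with $(-x)^{b-c}=-x$, $(-y)^{b-a}=1$, giving $j(-q^{4d+4e}x;q^{16})$, and the first denominator theta exponent $De+a(c-b)/2=4e-2$ gives $j(q^{4e-2}x^4y^{-4};q^{16})$. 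Assembling these pieces produces the second displayed formula.

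I do not expect a genuine obstacle here; this is a book-keeping exercise, which is why the paper calls the proof immediate. The only things to be careful about are keeping the fractional-part shift $d=d^*+\tfrac12$ consistent in every exponent, and never dropping a minus sign when evaluating $(-x)^k$ or $(-y)^k$ at odd $k$. One should also reconcile the range of the inner $f$-sum with the bound $f=0,\dots,b-1=3$ coming from Theorem~\ref{theo:general-fabc}.
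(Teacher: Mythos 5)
Your proposal is correct and is exactly the paper's argument: the paper gives no proof beyond calling the specialization $a=4$, $b=4$, $c=3$ (with $D=4$, $d=d^*+\tfrac12$, $e=e^*$) of the definition of $G_{a,b,c}$ and of Theorem \ref{theo:general-fabc} immediate, and your bookkeeping of all exponents and signs checks out. The $f$-range issue you flag is real but harmless: Theorem \ref{theo:general-fabc} gives $f=0,\dots,b-1=3$, so the upper limit $4$ in the stated proposition should be read as a typo for $3$.
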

We then take the appropriate specializations to obtain
\begin{proposition}\label{prop:f443B} We have
\begin{align}
    G_{4,4,3}(-q^3,-q^2,-1,-1;q)
    &=\frac{1}{4}\overline{J}_{0,3}\mu(q^3)
    -\frac{1}{2}\overline{J}_{1,3}\phi(q)\label{equation:g443}\\
    &\qquad +\overline{J}_{1,3}+\overline{J}_{1,4}\Theta_{1}(q)
    +\overline{J}_{1,3}\Theta_{2}(q)
    +\frac{1}{4}\overline{J}_{0,3}\frac{J_{6,12}^2}{J_{3}^3},\notag
\end{align}
\end{proposition}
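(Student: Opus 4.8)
The plan is to substitute $x=-q^{3}$ and $y=-q^{2}$ directly into the two sums for $G_{4,4,3}(x,y,-1,-1;q)$ displayed in Proposition \ref{prop:f443A}, and then simplify the resulting theta and Appell--Lerch factors. Under this substitution $(-y)^{t}=q^{2t}$, $(-x)^{t}=q^{3t}$, the theta factors become $j(q^{4t}x;q^{4})=j(-q^{4t+3};q^{4})$ and $j(q^{4t}y;q^{3})=j(-q^{4t+2};q^{3})$, and the Appell--Lerch arguments collapse to powers of $q$: since $y^{4}/x^{4}=q^{-4}$ and $x^{3}/y^{4}=-q$, the first ($q^{16}$-modulus) sum becomes $\sum_{t=0}^{3}q^{2t}q^{3\binom t2}j(-q^{4t+3};q^{4})\,m(-q^{6-4t},-1;q^{16})$ and the second ($q^{12}$-modulus) sum becomes $\sum_{t=0}^{2}q^{3t}q^{4\binom t2}j(-q^{4t+2};q^{3})\,m(-q^{7-4t},-1;q^{12})$. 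One first checks that none of these $m$-factors has a pole at this specialization, so the genericity assumption behind Proposition \ref{prop:f443A} (inherited from Theorem \ref{theo:general-fabc}) is harmless.

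Next I would normalize. Iterating \eqref{jIdentityB} in the form $j(q^{m}w;q^{m})=-w^{-1}j(w;q^{m})$ rewrites each $j(-q^{4t+3};q^{4})$ as a power of $q$ times $\overline{J}_{3,4}$, and each $j(-q^{4t+2};q^{3})$ as a power of $q$ times one of $\overline{J}_{2,3},\overline{J}_{0,3},\overline{J}_{1,3}$ (for $t=0,1,2$ respectively), using also $j(-q^{3};q^{3})=j(-1;q^{3})=\overline{J}_{0,3}$. On the Appell--Lerch side I would use \eqref{mrelation1} and \eqref{mrelation2}; combining them yields the reflection $m(-q^{a},-1;q^{b})+m(-q^{\,b-a},-1;q^{b})=1$. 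With these in hand the $q^{16}$-modulus sum collapses: after inversion the $t=0$ and $t=3$ contributions are opposite multiples of $\overline{J}_{3,4}\,m(-q^{6},-1;q^{16})$, and the $t=1$ and $t=2$ contributions are opposite multiples of $\overline{J}_{3,4}\,m(-q^{2},-1;q^{16})$, so the whole sum is $0$. The $q^{12}$-modulus sum then gives, after also using $m(-q^{7},-1;q^{12})=1-m(-q^{5},-1;q^{12})$,
\[
G_{4,4,3}(-q^{3},-q^{2},-1,-1;q)=\overline{J}_{2,3}+\overline{J}_{0,3}\,m(-q^{3},-1;q^{12})-\overline{J}_{2,3}\,m(-q^{5},-1;q^{12})-q^{-1}\overline{J}_{1,3}\,m(-q,-1;q^{12}).
\]

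The remaining step is to read off \eqref{equation:g443} from this, and this is where almost all of the work lies. The idea is to invoke the standard Appell--Lerch representations of the second-order mock theta function $\mu$ and the sixth-order mock theta function $\phi$ from \cite[Section $5$]{HM}, rescaled by $q\mapsto q^{3}$ and $q\mapsto q$ so that both acquire modulus $q^{12}$. The term $\overline{J}_{0,3}\,m(-q^{3},-1;q^{12})$ is absorbed into the $\mu(q^{3})$-piece, yielding $\tfrac14\overline{J}_{0,3}\mu(q^{3})$ together with the theta-quotient $\tfrac14\overline{J}_{0,3}J_{6,12}^{2}/J_{3}^{3}$ that completes $\mu$ to its mock modular form; the pair $-\overline{J}_{2,3}\,m(-q^{5},-1;q^{12})$, $-q^{-1}\overline{J}_{1,3}\,m(-q,-1;q^{12})$ together with the constant $\overline{J}_{2,3}$ is absorbed into the $\phi(q)$-piece, yielding $-\tfrac12\overline{J}_{1,3}\phi(q)$. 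Because the representations in \cite{HM} carry a parameter $z\ne-1$, one first moves to them with the change-of-$z$ identity for $m$ (\cite[Theorem 3.3]{HM}); each such move, plus the evaluation of the accompanying theta prefactors via \eqref{jIdentityA}--\eqref{jxToProduct} and Proposition \ref{bigJidentities}, contributes a quotient of theta functions. Finally I would \emph{define} $\Theta_{1}(q)$ and $\Theta_{2}(q)$ to be the resulting remainders, grouped so that what is left over is exactly $\overline{J}_{1,3}+\overline{J}_{1,4}\Theta_{1}(q)+\overline{J}_{1,3}\Theta_{2}(q)+\tfrac14\overline{J}_{0,3}J_{6,12}^{2}/J_{3}^{3}$; in particular the $\overline{J}_{2,3}$ produced above gets reabsorbed into $\overline{J}_{1,4}\Theta_{1}(q)$.

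The hard part is entirely this last step: matching the representations of \cite[Section $5$]{HM} with the correct $q$-powers, signs and $z$-values, tracking the theta-quotient corrections through the change-of-$z$ identity, and checking that the accumulated remainders really do collapse to the compact form in \eqref{equation:g443}. Everything before it — the specialization, the iterated use of \eqref{jIdentityB}, the reflection identity, and the vanishing of the $q^{16}$-modulus sum — is routine, though somewhat lengthy.
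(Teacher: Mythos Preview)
Your specialization, the normalization of the theta factors, and the cancellation of the $q^{16}$-modulus sum are all correct and match the paper's argument exactly (the paper packages the last point as the first identity of Proposition~\ref{prop:msplit}).  Where you diverge from the paper is precisely the step you flag as ``hard,'' and there is a genuine gap there.

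First a small point: $\overline{J}_{2,3}=j(-q^{2};q^{3})=j(-q;q^{3})=\overline{J}_{1,3}$ by \eqref{jIdentityA}, so after your reflection $m(-q^{7},-1;q^{12})=1-m(-q^{5},-1;q^{12})$ the intermediate expression is
\[
\overline{J}_{1,3}\bigl(1-m(-q^{5},-1;q^{12})-q^{-1}m(-q,-1;q^{12})\bigr)+\overline{J}_{0,3}\,m(-q^{3},-1;q^{12}),
\]
which is exactly $\overline{J}_{1,3}\bigl(m(-q^{7},-1;q^{12})-q^{-1}m(-q,-1;q^{12})\bigr)+\overline{J}_{0,3}\,m(-q^{3},-1;q^{12})$ again.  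So the detour through $m(-q^{5},-1;q^{12})$ accomplishes nothing, and the constant $\overline{J}_{2,3}$ you produce is precisely the $\overline{J}_{1,3}$ appearing in \eqref{equation:g443}; it should not be ``reabsorbed into $\overline{J}_{1,4}\Theta_{1}(q)$.''  (In fact the paper's proof produces no $\Theta_{1}$-term at all.)

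The substantive gap is in how you propose to reach $\phi(q)$.  The representation $\phi(q)=2m(q,-1;q^{3})$ from \cite[(5.23)]{HM} has modulus $q^{3}$, and ``$q\mapsto q$'' does not change that; there is no single-$m$ representation of $\phi(q)$ with modulus $q^{12}$ to match against your pair $m(-q^{5},-1;q^{12})$, $q^{-1}m(-q,-1;q^{12})$.  The change-of-$z$ identity you cite (\cite[Theorem~3.3]{HM}) keeps the modulus fixed and so cannot bridge $q^{12}$ to $q^{3}$.  The paper instead uses the $n=2$ splitting formula \cite[Theorem~3.5]{HM} (recorded here as the second identity of Proposition~\ref{prop:msplit}) to collapse the pair:
\[
m(-q^{7},-1;q^{12})-q^{-1}m(-q,-1;q^{12})=m(q^{2},-1;q^{3})+\Theta_{2}(q),
\]
with $\Theta_{2}(q)$ an explicit theta-quotient; then $m(q^{2},-1;q^{3})=1-\tfrac12\phi(q)$ via \eqref{mrelation1}--\eqref{mrelation2}.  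Replace your appeal to Theorem~3.3 by this use of Theorem~3.5 and your outline becomes the paper's proof; without it the $\phi(q)$-step does not go through.
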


\begin{proposition}\label{prop:msplit} We have
\begin{align*}
    \sum_{t=0}^{3}q^{-\binom{t+1}{2}}m(-q^{6-4t},-1;q^{16})=0
\end{align*}
and
\begin{equation*}
    m(-q^{7},-1;q^{12})-q^{-1}m(-q,-1;q^{12})=m(q^2,-1;q^3)+\Theta_2(q),
\end{equation*}
where
\begin{equation*}
    \Theta_2(q):=\frac{J_{6}^3}{j(-q^2;q^3)j(-1;q^{12})}
    \sum_{r=0}^{1}\frac{q^{2r}j(-q^{7+3r};q^6)j(-q^{6r};q^{12})}
    {j(q^{7};q^{6})j(-q^{3r};q^{6})}.
\end{equation*}
We point out that there are no singularities in the denominators of the quotients of the theta functions.
\end{proposition}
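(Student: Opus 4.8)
The first identity is purely formal and follows from \eqref{mrelation1} by pairing the summands $t$ and $3-t$. Since $z=-1$ satisfies $z^{-1}=-1$, \eqref{mrelation1} gives $m(-q^{-2},-1;q^{16})=-q^{2}\,m(-q^{2},-1;q^{16})$ and $m(-q^{-6},-1;q^{16})=-q^{6}\,m(-q^{6},-1;q^{16})$. Hence the $t=2$ term $q^{-3}m(-q^{-2},-1;q^{16})$ equals $-q^{-1}m(-q^{2},-1;q^{16})$ and cancels the $t=1$ term, while the $t=3$ term $q^{-6}m(-q^{-6},-1;q^{16})$ equals $-m(-q^{6},-1;q^{16})$ and cancels the $t=0$ term. (All of these Appell--Lerch functions are regular, the base being $q^{16}$.) This gives the first assertion.

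For the second identity, the key observation is that $m(q^{2},-1;q^{3})$, $m(-q^{7},-1;q^{12})$ and $m(-q,-1;q^{12})$ all share the parameter $z=-1$, while $q^{12}=(q^{3})^{4}$; so the statement is a ``change of base'' relation expressing $m$ at base $p:=q^{3}$ in terms of $m$ at base $p^{4}=q^{12}$ modulo a quotient of theta functions --- the Appell--Lerch analogue of the dissection \eqref{jinto2sums}. The plan is to start from
\[
m(q^{2},-1;q^{3})=\frac{1}{j(-1;q^{3})}\sum_{r\in\mathbf{Z}}\frac{q^{3\binom{r}{2}}}{1+q^{3r-1}},
\]
split $r$ by parity, and in each summand re-expand $1/(1+q^{3r-1})$ as a geometric series, the direction depending on the sign of $3r-1$; the bulk of the resulting double series reassembles into $m(-q^{7},-1;q^{12})-q^{-1}m(-q,-1;q^{12})$, while the finitely many boundary corrections collapse, via the Jacobi triple product, into a quotient of theta functions. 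Alternatively one may simply invoke the general change-of-$q$ formula for Appell--Lerch functions from \cite{HM} with $(a,b,c)=(1,2,1)$, $x=q^{2}$, $z=-1$, and read off the theta-quotient term.

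The remaining step --- and, I expect, the most tedious one --- is to identify this theta quotient with $\Theta_{2}(q)$. For this the plan is to use the elementary theta relations of Proposition \ref{jIdentities}: \eqref{jIdentityA} and \eqref{jIdentityB} to normalize the various $j(\pm q^{\ast};q^{6})$ and $j(\pm q^{\ast};q^{12})$ arguments, \eqref{jx2Identity} and \eqref{jxToProduct} to recombine products of theta functions, and Proposition \ref{bigJidentities} to simplify the $J$-quotients, finally merging the two residue contributions into the single sum over $r\in\{0,1\}$ appearing in the definition of $\Theta_{2}(q)$. To justify the last sentence of the statement, one checks from the Jacobi triple product that each theta function occurring in a denominator --- namely $j(q^{7};q^{6})$, $j(-q^{3r};q^{6})$ for $r=0,1$, $j(-q^{2};q^{3})$ and $j(-1;q^{12})$ --- is a nonvanishing power series, so the quotient is genuinely regular. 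The conceptual content sits in the change-of-base step; matching the precise shape of $\Theta_{2}(q)$ is bookkeeping, but it is exactly the kind of $q$-power arithmetic where slips are easiest to make.
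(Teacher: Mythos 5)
Your handling of the first identity is correct and is exactly the paper's argument: pair the $t$ and $3-t$ terms and apply \eqref{mrelation1} with $z=-1$ and base $q^{16}$; the four summands cancel in pairs.

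The second identity is where the proposal stops short of a proof. You correctly diagnose it as a base-change (dissection) statement for Appell--Lerch sums, but you never produce the theta-quotient correction term, and matching that term with the specific $\Theta_2(q)$ in the statement is not deferrable ``bookkeeping'' --- it \emph{is} the content of the identity. The paper proves it in one line by specializing \cite[Theorem 3.5]{HM} (the dissection of $m(x,z;q)$ into Appell--Lerch sums with base $q^{n^2}$ plus an explicit theta quotient) with $n=2$, $q\to q^3$, $x=q^2$, $z=z'=-1$: the $m$-part of that theorem specializes to $m(-q^{7},-1;q^{12})-q^{-1}m(-q,-1;q^{12})$, and its theta term is, under this specialization, literally the expression the authors name $\Theta_2(q)$ (hence no simplification with Proposition \ref{jIdentities} or Proposition \ref{bigJidentities} is needed). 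Your alternative citation, ``the change-of-$q$ formula with $(a,b,c)=(1,2,1)$,'' does not pin down any usable statement: $(a,b,c)$ parametrizes the Hecke-type sums $f_{a,b,c}$, not the $m$-dissection, so as written the reference is off target. Your primary route is also not justified as sketched: after splitting the defining series of $m(q^2,-1;q^3)$ by parity one obtains base-$q^{12}$ Appell--Lerch-type sums with the wrong $z$-parameter, and the theta quotient genuinely arises from changing $z$ (the changing-$z$ theorem of \cite{HM}), not from ``finitely many boundary corrections'' that ``collapse via the Jacobi triple product''; nothing in the sketch shows such corrections are finite in number, let alone that they sum to $\Theta_2(q)$. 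Until the explicit correction term is derived (either by quoting the precise statement of \cite[Theorem 3.5]{HM} and specializing, or by carrying out the series rearrangement rigorously) and checked against $\Theta_2(q)$, the second identity remains unproved. Your closing remark on the nonvanishing of the denominator theta functions is fine, since none of $q^{7}$, $-q^{3r}$, $-q^{2}$, $-1$ lies in the relevant lattice $q^{m\mathbb{Z}}$ of zeros of $j(\cdot;q^{m})$.
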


\begin{proof}[Proof of Proposition \ref{prop:msplit}]
For the first identity we use (\ref{mrelation1}).  For the second identity we use \cite[Theorem $3.5$]{HM} with $n=2$, $q\to q^3$ $x=q^2$, $z=z^{\prime}=-1$.
\end{proof}

\begin{proof}[Proof of Proposition \ref{prop:f443B}]

Using the elliptic transformation property (\ref{jIdentityB}), we have
\begin{align*}
    G_{4,4,3}(-q^3,-q^2,-1,-1;q)
    &=\sum_{t=0}^{3}q^{3\binom{t}{2}+2t}j(-q^{4t+3};q^4)
    m\Big (-q^{6-4t},-1;q^{16}\Big ) \\
&\qquad +\sum_{t=0}^{2}q^{4\binom{t}{2}+3t}j(-q^{4t+2};q^3)
m\Big (-q^{7-4t},-1;q^{12}\Big )\\
&=\sum_{t=0}^{3}q^{-\binom{t+1}{2}}j(-q^{3};q^4)
    m\Big (-q^{6-4t},-1;q^{16}\Big ) \\
&\qquad +\sum_{t=0}^{2}q^{4\binom{t}{2}+3t}j(-q^{4t+2};q^3)
m\Big (-q^{7-4t},-1;q^{12}\Big ),
\end{align*}
and see that the first summand evaluates to zero by Proposition \ref{prop:msplit}.  

We consider the second summand.  Using identity (\ref{jIdentityB}), we have
\begin{align*}
    \sum_{t=0}^{2}&q^{4\binom{t}{2}+3t}j(-q^{4t+2};q^3)
m\Big (-q^{7-4t},-1;q^{12}\Big )\\
&=\overline{J}_{1,3}m(-q^7,-1;q^{12})
+\overline{J}_{0,3}m(-q^{3},-1;q^{12})
+q^{-2}\overline{J}_{1,3}m(-q^{-1},-1;q^{12})\\
&=\overline{J}_{1,3}m(-q^7,-1;q^{12})
+\overline{J}_{0,3}m(-q^{3},-1;q^{12})
-q^{-1}\overline{J}_{1,3}m(-q,-1;q^{12})
\end{align*}
Using Proposition \ref{prop:msplit}, we can rewrite the above as
\begin{align*}
    \sum_{t=0}^{2}&q^{4\binom{t}{2}+3t}j(-q^{4t+2};q^3)
m\Big (-q^{7-4t},-1;q^{12}\Big )\\
&=\overline{J}_{1,3}\Big ( m(q^2,-1;q^3)+\Theta_2(q)\Big ) 
+\overline{J}_{0,3}m(-q^{3},-1;q^{12}).
\end{align*}

From the compiled list of identities in \cite[Section $5$, (5.3)]{HM}, we have that
\begin{equation*}
    \mu(q^3)=4m(-q^3,-1;q^{12})-\frac{J_{6,12}^2}{J_{3}^3}.
\end{equation*}
Hence we can write
\begin{align*}
    \sum_{t=0}^{2}&q^{4\binom{t}{2}+3t}j(-q^{4t+2};q^3)
m\Big (-q^{7-4t},-1;q^{12}\Big )\\
&=\overline{J}_{1,3}\Big ( m(q^2,-1;q^3)+\Theta_2(q)\Big ) 
+\frac{1}{4}\overline{J}_{0,3}\Big (\mu(q^3)+\frac{J_{6,12}^2}{J_{3}^3} \Big). 
\end{align*}
We also have that \cite[Section $5$, (5.23)]{HM}
\begin{equation*}
    \phi(q)=2m(q,-1;q^3),
\end{equation*}
where $\phi(q)$ is sixth-order mock theta function.  From (\ref{mrelation2}) and (\ref{mrelation1}), we have
\begin{equation*}
    m(q^2,-1;q^3)= 1- q^{-1}m(q^{-1},-1;q^3)
    =1-m(q,-1;q^3)=1-\frac{1}{2}\phi(q).
\end{equation*}
As a result, we have that the second summand can be written
\begin{align*}
    \sum_{t=0}^{2}&q^{4\binom{t}{2}+3t}j(-q^{4t+2};q^3)
m\Big (-q^{7-4t},-1;q^{12}\Big )\\
&=\overline{J}_{1,3}\Big ( 1-\frac{1}{2}\phi(q) +\Theta_2(q)\Big ) 
+\frac{1}{4}\overline{J}_{0,3}\Big (\mu(q^3)+\frac{J_{6,12}^2}{J_{3}^3} \Big). \qedhere
\end{align*}
\end{proof}

\begin{proof}[Proof of Theorem \ref{theorem:f443}]  The proof of (\ref{equation:f443-A}) is trivial.  One simply consults  the compiled list of identities in \cite[Section $5$, (5.6)]{HM} and compares the $q$-hypergeometric forms.
For the proof of (\ref{equation:f443-B}), we use Proposition \ref{prop:f443B} to obtain
\begin{align*}
    f_{4,4,3}&(-q^3,-q^2;q)\\
    &=\frac{1}{4}\overline{J}_{0,3}\mu(q^3)
    -\frac{1}{2}\overline{J}_{1,3}\phi(q)\\
    &\qquad +\overline{J}_{1,3}
    +\overline{J}_{1,3}\Theta_{2}(q)
    +\frac{1}{4}\overline{J}_{0,3}\frac{J_{6,12}^2}{J_{3}^3}
    +\frac{1}{\overline{J}_{0,16}\overline{J}_{0,12}}\theta_{4,4,3}(-q^3,-q^2;q).
\end{align*}
Instead of trying to simplify $\theta_{4,4,3}(-q^3,-q^2;q)$, we just show that there are no singularities in the denominator.  Examining the two theta functions in the denominator leads to
\begin{align*}
    &j(q^{4e-2}(-q^3)^4(-q^2)^{-4};q^{16})j(-q^{4d+2}(-q^2)^4(-q^3)^{-3};q^{16}) = j(q^{4e+2};q^{16})j(q^{4d+1};q^{16})
\end{align*}
which never vanishes for $0<|q|<1$.
\end{proof}

\section*{acknowledgements}
We would like to thank Ae Ja Yee and Jonathan Bradley-Thrush for helpful comments and suggestions which improved the manuscript.   This research was supported by the Theoretical Physics and Mathematics Advancement Foundation BASIS, agreement No. 20-7-1-25-1.

\end{document}